\documentclass[a4paper,psamsfonts,reqno]{amsart}
\usepackage{amsmath}
\usepackage{amssymb}
\usepackage{tikz-cd}
\usepackage{color}
\usepackage{verbatim}

\usepackage{amscd}
\usepackage{hyperref}

\definecolor{Blue}{rgb}{0.3,0.3,0.9}
\definecolor{Red}{rgb}{0.9,0.3,0.3}

\newtheorem{Lemma}{Lemma}[section]
\newtheorem{Th}[Lemma]{Theorem}

\newtheorem{Prop}[Lemma]{Proposition}

\newtheorem{Cor}[Lemma]{Corollary}

\theoremstyle{definition}

\newtheorem{Def}[Lemma]{Definition}
\newtheorem{Ex}[Lemma]{Example}
\newtheorem{Exs}[Lemma]{Examples}

\newtheorem*{MTh}{Main Theorem}

\theoremstyle{remark}
\newtheorem{Rem}[Lemma]{Remark}

\newtheorem{Remark}[Lemma]{Remark}
\newenvironment{Proof}{{\sc Proof.}\ }{~\rule{1ex}{1ex}\vspace{0.5truecm}}

\newcommand{\add}{\mbox{\rm add}}
\newcommand{\Add}{\mbox{\rm Add}}

\newcommand{\Tr}{\mbox{\rm Tr}}

\newcommand{\N}{\mathbb N}

\newcommand{\Z}{\mathbb{Z}}

\newcommand{\Ccal}{\mathcal{C}}

\DeclareMathOperator{\Hom}{Hom}
\DeclareMathOperator{\End}{End}

\title[Trace ideals and uniserial modules] {Trace ideals and  uniserial modules}

\begin{document}
\author{Dolors Herbera}

\address{Departament de Matem\`atiques 
Universitat Aut\`onoma de Barcelona, 08193 Bellaterra
(Barcelona), Spain }
\email{dolors.herbera@uab.cat}

\thanks{The first  author was partially supported by the projects MIMECO  PID2023-147110NB-I00 financed by the Spanish Government.}
 \author{Pavel P\v r\'\i hoda}
\address{Charles University, Faculty of Mathematics and Physics \\Department
of Algebra, Sokolovsk\'a~83,
18675 Praha 8, Czech Republic}
\email{prihoda@karlin.mff.cuni.cz}

\thanks{The second author was supported by the Czech Science Foundation grant GA\v CR 23-05148S}

\begin{abstract}
We thoroughly investigate the  trace ideals of projective modules over the endomorphism ring of a uniserial module. After the work of  Dubrovin and Puninski, it is known that this class of rings provides examples of trace ideals of projective right modules that are not trace ideals of projective left modules. In this paper we further investigate when this happens, giving an intrinsic description of such trace ideals and their properties.  We also use the theory associated to lifting projective modules modulo a trace ideal to give an alternative approach to Puninski's construction of a direct summand of a serial module that is not serial.
\end{abstract}
\date{\today}
\maketitle

Gena Puninski, back in the early 2000's, realized the importance of trace ideals of projective modules in order to organize the study of arbitrary projective modules over a ring. For example, over a noetherian ring, any idempotent ideal is the trace of a projective module and, in a number of examples, this readily shows  the existence of projective modules that are not  direct sums of finitely generated ones.

Following this track in 2010, P\v r\'\i hoda \cite{P2} started to develop the theory of what we now call relatively big projective modules and that the authors of this paper have been developing in \cite{wiegand}, \cite{AHP} and  \cite{AHP2} to study  direct sum decompositions of certain classes of modules.

A key question in this context is whether the property of being the trace ideal of a projective module is symmetric. That is, whether $I$ being the trace of a projective right $R$-module implies it is also the trace ideal of a projective left $R$-module. As far as we know, the first negative answer to this question was given by Dubrovin and Puninski in \cite{DP} and  by Dubrovin, Puninski and P\v r\'\i hoda in \cite{DPP}. 

In \cite{DP}, the authors determine all projective right and left modules, up to isomorphism, over the endomorphism ring of a certain class of uniserial modules over a particular class of chain domains called nearly simple chain domains. Computing the traces of such projective modules one deduces that there are trace ideals of projective right (left) modules that are not trace ideals  of left (right) projective modules. An essential ingredient to prove these results is P\v r\'\i hoda's Theorem \cite{pavel} showing that projective modules are determined, up to isomorphism, by their factor modulo the Jacobson radical. 

The computations in \cite{DP} heavily rely on previous work by Puninski, who had shown the existence of what is called non quasi-small uniserial modules, solving a question by   Dung and Facchini \cite[p.~111]{Dung-Facchini}, see also \cite[Chapter~11, Problem~15]{libro}. 

 Continuing the work in \cite{DP}, in \cite{DPP} the authors manage to compute all projective modules over the so called Gerasimov-Sakhaev counterexample \cite{GerSak}. This example was  constructed to show the existence of   semilocal rings with  cyclic flat modules that are not projective, answering in the negative a long standing question by Lazard. Computing the traces of  projective modules over such example also shows the set of traces of  projective right modules is not the same as the set of traces of projective left modules.

The common point between both families of examples is that they are modules over a  ring $S$ such that modulo the Jacobson radical it is the product of two division rings. Equivalently, $S$ has exactly two maximal right ideals that happen to be two-sided and that are also maximal as left ideals. Moreover, $S$ has a couple of non invertible elements $f$ and $g$ such that $f+g$ is invertible,  $gf=0$ and $fg\neq 0$. As proved by Z\"oschinger in \cite{Zoschinger}, the existence of such elements is \emph{essentially} equivalent to the existence of cyclic flat modules over $S$ that are not projective.

In \cite{pavel}, P\v r\'\i hoda proved that if $U$ is a  uniserial right module over a ring $R$, and $S$ denotes its endomorphism ring, then all projective right $S$-modules are free if and only if $S$  does not have such a pair of elements.

In  this paper we study in a systematic way the trace ideals of projective modules over the endomorphism ring of a uniserial module. Extending (and perhaps even simplifying) some of the statements that hold for the Dubrovin-Puninski example to  the endomorphism ring of an arbitrary uniserial module $U$ such that the class of direct summands of copies of $U$ contains elements not isomorphic to copies of $U$.

Our main result is the following,

\begin{MTh}\label{A}  \emph{(Theorem~\ref{TL} and Theorem~\ref{TLfp})} Let $R$ be a ring, and let $U$ be a  uniserial right $R$-module with endomorphism ring $S$. Consider the ideals of $S$
\[I=\{f\in S\mid f\mbox{ is not a monomorphism}\}\]
and 
\[K=\{f\in S\mid f\mbox{ is not an epimorphism}\}.\]
Assume that there exists  $g\in I\setminus K$ and $f\in K\setminus I$ such that $gf= 0$. Then
\begin{enumerate}
    \item $T=SfS\subseteq K$ is the trace of a countably generated projective and pure right ideal $P$ of $S$. Moreover, $T$ is pure as a right ideal;
    \item Any projective right $S$-module is isomorphic to the direct sum of a free $S$-module and a direct sum of copies of $P$; therefore, the only ideals of $S$ that are traces of projective right $S$ modules are $0, S$ and $T$.
    \item $L=SgS\subseteq I$  is the trace of a countably generated projective and pure left ideal $Q$ of $S$. Moreover, $L$ is pure as a left ideal;
    \item Any projective left $S$-module is isomorphic to a direct sum of a free module and a  module which is isomorphic to a direct sum copies of $Q$; therefore, the only ideals of $S$ that are traces of projective left $S$ modules are $0, S$ and $L$.
\end{enumerate}

If, in addition, $U$ is a finitely presented module over a chain domain $R$ then 
$T=Sf$ so $T$ is finitely generated as a left $S$-module, and $L=gS$ so $L$ is finitely generated as a right $S$-module.    
\end{MTh}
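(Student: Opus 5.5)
We will rely on the standard structure of $S=\End(U)$ for a uniserial module $U$ (Herbera--Shamsuddin / Facchini): $I$ and $K$ are two-sided completely prime ideals, every non-invertible element of $S$ lies in $I\cup K$, and $J(S)=I\cap K$. Under the hypothesis $g\in I\setminus K$, $f\in K\setminus I$, both ideals are proper and incomparable, so $S/J(S)\cong S/I\times S/K$ is a product of two division rings. By P\v r\'\i hoda's theorem, a projective module $P$ is determined up to isomorphism by $P/PJ(S)$, which is a direct sum of a number of copies of each simple module. The plan is therefore to classify the possible pairs $(\alpha,\beta)$ of multiplicities of $S/I$ and $S/K$ that occur in $P/PJ(S)$, and to realize the non-free pairs by a single module $P$.

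\textbf{Constructing $P$ and the trace $T$.} Since $f+g$ is a unit (it lies in neither $I$ nor $K$), we may replace $f,g$ by $u^{-1}f,u^{-1}g$ with $u=f+g$. We may then assume $f+g=1$. From $gf=0$ we get $f=f^2+gf=f^2$ modulo corrections; more precisely $f=(f+g)f=f^2$. Hence $f$ is an idempotent in $K\setminus I$, which is impossible if $f\in K$ is not invertible. So the relation cannot literally normalize to idempotents. Instead we follow the Dubrovin--Puninski pattern and consider the countably generated right ideal $P=\sum_n f_nS$ built from a direct limit $S\xrightarrow{f\cdot}S\xrightarrow{f\cdot}\cdots$, or equivalently the pure right ideal generated by the chain $fS\subseteq\cdots$. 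The target is a countably generated flat module $\varinjlim(S\xrightarrow{\ell_f}S\to\cdots)$ that is projective because the relation $gf=0$, together with $f+g$ being a unit, yields splittings of the Mittag-Leffler / Lazard type; this is Z\"oschinger's criterion read backwards. We then compute $P/PJ(S)\cong S/I$, with $\aleph_0$ or $1$ copy to be decided, and show that $\Tr(P)=SfS=T$ by observing that the trace is generated by the images of the structure maps, all of which are multiples of $f$. Purity of $T$ as a right ideal follows because $T=\Tr(P)$ is a direct limit of the split pieces, and trace ideals of projectives are idempotent with $PT=P$, so $S/T$ is flat as a left module.

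\textbf{Classification (items 2 and 4).} Let $Q'$ be any projective module, with multiplicities $(\alpha,\beta)$. If $\alpha=\beta$, then $Q'/Q'J\cong F/FJ$ for a free $F$, so $Q'\cong F$. Otherwise we show that the defect always goes in one direction. For right modules, a projective with $\alpha>\beta$ must be impossible and a projective with $\alpha<\beta$ is handled as follows. We show that a finitely generated projective is free; this uses the fact that idempotents of $S$ lift, and that $S/J$ has no nontrivial idempotents compatible with $I$ and $K$ except via a rank argument using monomorphism versus epimorphism (injective endomorphisms compose to injective ones). For infinitely generated projectives we invoke the fact that countably generated projectives are determined by their top to write $Q'\cong F\oplus P^{(\gamma)}$. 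The traces are then $0$, $S$ and $T$. Items 3 and 4 are obtained by the left--right symmetric argument with $g$ in place of $f$; the asymmetry $gf=0$ versus $fg\neq0$ is what yields $L\ne T$.

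\textbf{Finitely presented case.} When $U$ is finitely presented over a chain domain, $U\cong R/aR$. Here $f$ and $g$ are left multiplications, and we check directly that $SfS=Sf$ by showing $fS\subseteq Sf$: any $fs$ factors through $f$ on the left, because of the explicit description of endomorphisms and the divisibility in a chain domain. The symmetric argument gives $L=gS$.

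\textbf{Main obstacle.} The main obstacle is the correct construction of $P$ and the proof that it is projective with the right top. I expect to need a careful direct-limit argument using both $gf=0$ and $f+g\in U(S)$, and I would try the explicit Zöschinger-type splitting first.
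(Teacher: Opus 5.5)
There are genuine gaps.

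\textbf{Classification (items 2 and 4).} This is the central gap. You assert that ``the defect always goes in one direction'' but give no reason, and you state the direction backwards. Your own $P$ has $P/PJ(S)\cong S/I$, i.e.\ $(\alpha,\beta)=(1,0)$, so for right modules it is $\alpha<\beta$ that must be excluded.

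The missing idea is the one-sided weak Nakayama lemma, Proposition~\ref{weaknakayama}: a finitely generated left ideal $J$ with $IJ=J$ is zero. To see this, choose generators $f_i$ with $\mathrm{Ker}\,f_1$ minimal and write $f_1=\sum g_if_i$ with $g_i\in I$. Comparing kernels gives $\mathrm{Im}\,f_{i_0}\cap\mathrm{Ker}\,g_{i_0}=0$ for some $i_0$, which is impossible in a uniserial module. Together with Whitehead's characterization of traces (Proposition~\ref{chartraces}), this gives Corollary~\ref{tracenotin}: a trace of a projective right $S$-module contained in $I$ is zero, and dually a trace of a projective left module contained in $K$ is zero.

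This is what rules out $\alpha<\beta$. For $Q'$ countably generated with $\alpha$ finite, split off $S^{\alpha}$; the complement $Q''$ satisfies $Q''=Q''I$, so its trace lies in $I$ and $Q''=0$. Only then does P\v{r}\'{\i}hoda's theorem give $Q'\cong S^{(\beta)}\oplus P^{(\gamma)}$.

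Your ingredient for the finitely generated case is false in this setting. $U$ is indecomposable, so $S$ has only trivial idempotents, while $S/J(S)\cong S/I\times S/K$ has the idempotent $(1,0)$. Idempotents do \emph{not} lift modulo $J(S)$, and this failure is exactly what allows non-free projectives. That case is not needed anyway once $\alpha<\beta$ is excluded.

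\textbf{Purity of $T$ and $L$.} These statements are not proved. That $T$ is pure as a right ideal means $S/T$ is flat as a \emph{right} module, i.e.\ $h\in Th$ for every $h\in T$. You conclude flatness on the other side. In any case, idempotence of a trace ideal together with $PT=P$ does not imply purity: in the ring $A$ of lower triangular $2\times 2$ matrices over a field, $T=Ae_{11}A$ is the trace of $e_{11}A$, yet $e_{21}\in T$ and $Te_{21}=0$.

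The paper obtains purity from the intrinsic description $SfS=\{h\in K\mid h(U)\subseteq U_e\}$ (Lemma~\ref{auxTL}, Theorem~\ref{TL}). Each monomorphism $h$ in this ideal is annihilated on the left by an epimorphism, so Proposition~\ref{rightleft} puts $h$ into a pure right ideal inside $T$. Non-monomorphisms are reduced to monomorphisms via Lemma~\ref{i+kpbpo}.

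A shorter route also works. For $x=\sum_i a_ifb_i$, uniseriality gives $x(U)\subseteq\sum_i a_if(U)=a_{i_0}f(U)\subseteq bf(U)$ for some monomorphism $b\in\{a_{i_0},a_{i_0}-1,1\}$. By Remark~\ref{monoepi}, $bf=vf$ with $v\in\{b,b+g\}$ a unit. Lemma~\ref{pbpo} then gives $x\in bfS\subseteq vP$, a pure right ideal contained in $SfS$, so $x\in (vP)x\subseteq Tx$.

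\textbf{Finitely presented case.} ``Divisibility in a chain domain'' does not prove $fS\subseteq Sf$, and your sketch never uses $gf=0$. The paper uses it in the form $t\in RrR$, where $f$ is induced by $t\in S'=R\cap rRr^{-1}$. It combines this with Lemma~\ref{equalityideals}, which says a left ideal of $S'$ is determined by its extensions to $R$ and to $rRr^{-1}$, to show $S'tS'+rR=S't+rR$.

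\textbf{Normalization.} Setting $f+g=1$ is illegitimate because $(u^{-1}g)(u^{-1}f)$ need not be $0$; that is the source of your contradiction. The relation to use is $(f+g)f=f^2$. Proposition~\ref{rightleft} applies to it directly and gives the pure right ideal $P=\sum_{m\in\Z}u^{-m-1}fu^mS$ with $P/PJ(S)\cong S/I$, a single copy.
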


 A chain ring $R$ (that is, $R$ is uniserial as a right and as a left $R$-module) is said to be  nearly simple  if the only two-sided ideals of $R$ are $R$, $0$ and $0\neq J(R)=J(R)^2$.  Over nearly simple chain rings, it is easy to construct uniserial modules with the maps required in Theorem~\ref{A} (cf. Example~\ref{nearlysimple}), and the case of nearly simple chain domains is the one of the example of Dubrovin and Puninski that we consider in Corollary~\ref{cor:nearlysimple}.

 \medskip

In \cite{traces}, Herbera and P\v r\'\i hoda proved that, over any ring, projective modules can be lifted modulo the trace ideal of a projective module. We have developed  a number of interesting consequences  for  direct sum decompositions of modules over local noetherian rings  \cite{wiegand} and for torsion-free modules over domains  \cite{AHP}. Here we find a new one in the context of summands of a direct sum of uniserial modules. 

Recall that a module is serial if it is a direct sum of uniserial modules. 
Puninski \cite{Pun01} found an example of  a direct summand of a serial module which 
is not a direct sum of indecomposable modules. In particular, this shows that the class of serial 
modules may not be closed under direct summands. This construction answered \cite[Chapter~11, Problem~10]{libro}.
The arguments in \cite{Pun01} were   model-theoretical. 
Another approach based on the dimension theory of projective modules over semilocal rings
was described in \cite{pavel}. We will show in the final section of the paper that, surprisingly enough, Puninski's construction can also be understood 
as a consequence of the lifting of projective modules, modulo trace ideals of projectives.

\medskip

We have done our best to write the paper as self-contained as possible, including proofs of some results that are well known to illustrate the use of the  point of view of traces. For simplicity, we stick to the case of uniserial modules but it certainly seems that  similar results could hold for biuniform modules (see \cite[Lemma~4.2]{pavel}) or for the cyclically presented modules over local domains  studied by the Amini brothers and Facchini, see \cite{libro2}. 

In the first section of the paper we introduce general traces of modules, in the second  we recall  results on traces of projective modules. One of the results we stress is that traces of projective modules are determined by their factor modulo the Jacobson radical \cite[Corollary~2.9]{traces}. Therefore, if the ring modulo the Jacobson radical is the product of two division rings then it has, at most, $4$ ideals that are traces of projective right or left modules.  In the third section we examine questions of symmetry between traces of projective modules, showing in Corollary~\ref{tracenotin} that for the endomorphism ring of a uniserial module there is an intrinsic asymmetry for trace ideals of right (left) projective modules.  This implies  that the existence of a non-free projective right module over such rings is equivalent to having a   trace ideal of a right projective module that is   not the trace ideal of a left projective module.

In Section 4, we recall a general framework   to construct a class of non-free projective modules that appear in the Gerasimov-Sakhaev counterexample and in the Dubrovin-Puninski example. The only novelty is  the emphasis on trace ideals and their applications  over the endomorphism ring of uniserial modules in Proposition~\ref{gfzero}.  

In Section~5 we explicitly determine the nontrivial trace ideals described in our Main Theorem, and in Section~6 we specialize to the case of finitely presented uniserial modules over chain domains. These two sections are particularly interesting, as they are really introducing new ideas into this context that we think clarify the results previously known for nearly simple chain rings.

We switch topic in Section 7, which is devoted to the applications of the lifting of projective modules modulo a trace ideal to the direct sum decomposition of serial modules.

\medskip

The writing of this paper was motivated by the very recent application that Martini, Parra, Saor\'\i n and Virili \cite{MPSV} have found of the Dubrovin-Puninski's example to construct Grothendieck categories with some pathological properties --- we recall briefly the context they are using in Example~\ref{TTF}. We are grateful for the interesting discussions with the authors of \cite{MPSV} that motivated us to clarify the properties of the ideals that are traces of projective right or left modules in such example. The  original proofs are scattered in a number of papers, and also, at some point, the original arguments needed some countability condition on the ring that we show it is not really necessary.

We also thank Pere Ara for pointing out the existence of the Morita context in the setting of Remark~\ref{rem:tensortrace}. 

Last but not least, we thank the anonymous reviewer for his/her useful comments and suggestions.

\section{Trace ideals}

\begin{Def}
Let $R$ be a ring, and let $M$ be a right $R$-module. The \textit{trace} of $M$ in $R$ is  defined as 
    $$\Tr_R(M)=\sum _{f\in \Hom_R(M,R)} f(M).$$    
\end{Def}

\begin{Remark}\label{rem:tensortrace}
Let $R$ be a ring and let $M$ be a right $R$-module with endomorphism ring $S=\End_R(M)$. Then $M$ is also a left $S$-module. Since $\Hom_R(M,R)$ is both a right $S$-module and a left $R$-module, the tensor product $\Hom_R(M,R)\otimes_S M$ is well-defined. Then $\Tr_R(M)$ is the image of the \textit{trace map}:
    \[\begin{tikzcd}[row sep=0ex]
        \tau\colon & [-3em] \Hom_R(M,R)\otimes_S M\rar & R \\
        & f\otimes x\rar[mapsto] & f(x).
    \end{tikzcd}\]
Since $\Hom_R(M,R)$ is a left $R$-module and $M$ is a right $R$-module it follows that the image of $\tau$ is a $R$-$R$-module, that is $\Tr_R(M)$ is a two-sided ideal of $R$.

There is another natural morphism
 \[\begin{tikzcd}[row sep=0ex]
        \Phi\colon & [-3em] M\otimes _R\Hom_R(M,R)\rar & S \\
        & x\otimes f\rar[mapsto] & x\cdot f.
    \end{tikzcd}\]
where $x\cdot f\colon M \to M$ is defined by $x\cdot f (m)=xf(m)$ for any $m\in M$. Note that $x\cdot f$ is an endomorphism of $M$ that factors through $R$. In fact, the image of $\Phi$ is the two sided ideal of $S$ formed by the endomorphisms of $M$ that factor through a finitely generated free $R$-module. We usually denote the image of the pairing $\Phi$ simply by $M\Hom_R(M,R)$ see, for example, Remark~\ref{mplusM}. 

The data $(R,S, M, \Hom_R(M,R) ,\tau, \Phi) $ forms a \emph{Morita context} see, for example, \cite[\S 1.1.6]{MR}. The image of $\tau$ and the image of $\Phi$ are also known as the trace ideals of the context, see \cite[Definition~1.1]{muller}. 

The ring of this Morita context is 
$$\Lambda:=\begin{pmatrix} R&\Hom _R (M,R)\\ M&S \end{pmatrix}\cong \mathrm{End}_R(R\oplus M)$$
Let $P=\left(\begin{smallmatrix}1&0\\ 0&0\end{smallmatrix}\right)\Lambda$. We will see in Remark~\ref{mplusM} that $S/M\Hom_R(M,R)\cong \Lambda /\mathrm{Tr}_\Lambda (P)$.
\end{Remark}

The following properties of traces of modules are well known and easy to prove.

\begin{Prop} \label{traceprop} Let $R$ be a ring, and let $M$, $N$ be right $R$-modules then:
\begin{enumerate}
    \item[(i)] for any family $\{M_i\}_{i\in A}$ of right $R$-modules $\Tr_R(\oplus _{i\in A} M_i)= \sum _{i\in A}\Tr_R(M_i)$.
    \item[(ii)] For any $n\ge 1$, recall that $M^n$ is a right module over $M_n(R)$. Then $\Tr_{M_n(R)}(M^n)=M_n(\Tr_R(M))$.
    \item[(iii)] Let $J$ be a two-sided  ideal of $R$. Then $(\Tr _R (M)+J)/J \subseteq \Tr _{R /J}(M/MJ)$. If $M_R$ is projective, then $(\Tr _R (M)+J)/J = \Tr _{R /J}(M/MJ)$.
\end{enumerate}
\end{Prop}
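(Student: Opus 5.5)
We prove the three items directly from the definition $\Tr_R(M)=\sum_{f\in\Hom_R(M,R)}f(M)$, using only the universal property of direct sums and the standard matrix description of homomorphisms.

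\emph{For (i).} A homomorphism $f\colon\oplus_{i\in A}M_i\to R$ is determined by its restrictions $f_i=f\varepsilon_i$ to the summands, where $\varepsilon_i$ is the canonical inclusion. Conversely, every family $(f_i)$ defines such an $f$.
\begin{itemize}
\item Every element of $\oplus_i M_i$ is a finite sum $\sum_i\varepsilon_i(m_i)$, so $f(\oplus M_i)\subseteq\sum_i f_i(M_i)\subseteq\sum_i\Tr_R(M_i)$. This gives $\subseteq$.
\item Each $g\colon M_i\to R$ extends to $\oplus M_i$ by $g\pi_i$, where $\pi_i$ is the canonical projection. Hence $\Tr_R(M_i)\subseteq\Tr_R(\oplus M_i)$, which gives $\supseteq$.
\end{itemize}

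\emph{For (ii).} We view $M^n$ as row vectors, so it is a right $M_n(R)$-module. A homomorphism $M^n\to M_n(R)$ is determined by where it sends the elements $(0,\dots,m,\dots,0)$.
\begin{itemize}
\item Right multiplication by the matrix unit $e_{ii}$ forces the image of $(0,\dots,m,\dots,0)$ (with $m$ in position $i$) to be a matrix whose only nonzero column is the $i$-th.
\item Compatibility with the matrix units $e_{ij}$ shows that every $M_n(R)$-homomorphism $M^n\to M_n(R)$ is given by a column $(\varphi_1,\dots,\varphi_n)^t$ with $\varphi_k\in\Hom_R(M,R)$. It sends the row $(m_1,\dots,m_n)$ to the matrix with entries $\varphi_k(m_j)$.
\item Every entry of such an image lies in $\Tr_R(M)$, which gives $\subseteq$.
\item For $\supseteq$, fix $\varphi\in\Hom_R(M,R)$ and $m\in M$. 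Taking $\varphi$ in a single slot and $m$ in a suitable position realizes $\varphi(m)e_{kj}$ for any $k,j$. Since $\Tr$ is additive, we get $M_n(\Tr_R(M))$.
\end{itemize}
The only real work here is keeping the left/right conventions straight.

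\emph{For (iii).} Any $f\colon M\to R$ induces $\bar f\colon M/MJ\to R/J$, because $f(MJ)\subseteq f(M)J\subseteq J$. Its image is $(f(M)+J)/J$, which gives the inclusion.

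For equality when $M$ is projective, we must show every $R/J$-homomorphism $h\colon M/MJ\to R/J$ lifts to some $f\colon M\to R$.
\begin{itemize}
\item Compose $h$ with the projection $M\to M/MJ$ to get an $R$-homomorphism $M\to R/J$.
\item Since $M_R$ is projective, this map lifts through the surjection $R\to R/J$ to some $f\colon M\to R$.
\item Then $\bar f=h$, so $h(M/MJ)=(f(M)+J)/J$.
\end{itemize}
Summing over all $h$ yields equality. This lifting step is the one place where projectivity is needed, and it is immediate.
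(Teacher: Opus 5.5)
Your proof is correct. The paper gives no argument for (i) and (iii), calling them well known and easy, and your verifications there are the standard ones. In particular, lifting the composite of $M\to M/MJ$ with $h$ through $R\to R/J$ is exactly where projectivity of $M_R$ enters.

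For (ii) your route differs from the paper's. The paper justifies (ii) in the Remark following the proposition, using Morita theory rather than matrix units:
\begin{itemize}
\item It takes the isomorphism $\Hom_R(M,N)\cong\Hom_S(\Hom_R(P,M),\Hom_R(P,N))$ with $P=N=R^n$ and $S=M_n(R)$. This gives $\Hom_{M_n(R)}(M^n,M_n(R))\cong\Hom_R(M,R^n)$.
\item Consequently $\Tr_{M_n(R)}(M^n)$ consists of the endomorphisms of $R^n$ that factor through some $M^\ell$.
\item These are precisely the matrices with entries in $\Tr_R(M)$.
\end{itemize}
Your column description $(\varphi_1,\dots,\varphi_n)^t$, sending a row to the matrix $(\varphi_k(m_j))$, is this same identification $\Hom_R(M,R^n)\cong\Hom_R(M,R)^n$ verified by hand. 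Your version is self-contained and keeps the row/column conventions explicit. The paper's version is more conceptual and extends verbatim to any progenerator $P$: the trace of $\Hom_R(P,M)$ over $\End_R(P)$ is the set of endomorphisms of $P$ factoring through some $M^\ell$.

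You leave two one-line checks implicit:
\begin{itemize}
\item Each $\varphi_k$ is $R$-linear. Compare the images of $(m,0,\dots,0)$ and $(m,0,\dots,0)\cdot re_{11}$.
\item For the inclusion $\supseteq$, any column of $R$-homomorphisms actually defines an $M_n(R)$-homomorphism. Check $\phi(xA)=\phi(x)A$ entrywise.
\end{itemize}
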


\begin{Rem} Statement $(ii)$ of Proposition~\ref{traceprop} is a particular case of the behavior of traces under Morita Equivalence. 

Let $R$ be a ring, and let $P_R$ be a finitely generated projective module that is also a  generator. Let $S=\mathrm{End}_R(P)$. Then Morita's Theorem implies that the functor $\mathrm{Hom}_R (P, -)$ induces an equivalence between the categories of right $R$-modules and the category of right $S$-modules. 

In particular, for  any pair of
right $R$-modules $M$, $N$, there is an  isomorphism of abelian
groups
\begin{equation}\label{iso} \mathrm{Hom}_R(M,N)\to
\mathrm{Hom}_S(\mathrm{Hom}_R(P,M),\mathrm{Hom}_R(P,N))\end{equation} 
 given by
$f\mapsto f_*$ for any $f\in \mathrm{Hom} _R(M,N)$, where
$f_*(g)=f\circ g$ for any $g\in \mathrm{Hom}_R(P,M)$. Taking $N=P$, we deduce that $\mathrm{Hom}_R(M,P)\cong \mathrm{Hom}_S(\mathrm{Hom}_R(P,M),S)$ and that $\Tr _S(\mathrm{Hom}_R(P,M))$ are the $f\in S$ that factor through $M^\ell$ for some $\ell \in \N$.

If $P=R^n$ then $S=M_n(R)$. The morphisms of right $R$-modules  $R^n \to R^n$ that factor through $M^\ell$, for some $\ell$, are the ones given by left multiplication by matrices with entries  in $\Tr _R(M)=I$. Therefore, $M_n(I)$ is the trace of the right $M_n(R)$-module $\Hom _R (R^n, M)_{M_n(R)}\cong {M^n}_{M_n(R)}$.    
\end{Rem}

\begin{Lemma} \label{isoequal} Let $R$ be a ring, and let $I$ and $J$ be two-sided ideals of $R$. Assume that $I$ is the trace of a right $R$-module. Then
\begin{itemize}
\item[(i)] if $J$ is a homomorphic image of $I$ as right $R$-modules then $J\subseteq I$.
\item[(ii)] If $I\cong J$ as right $R$-modules, and $J$ is also the trace of a right $R$-module then $I=J$.
\end{itemize}
\end{Lemma}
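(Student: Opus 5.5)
The plan is to use the definition of the trace directly, together with the fact that $I=\Tr_R(M)$ is a two-sided ideal (Remark~\ref{rem:tensortrace}). The key observation is that, as a right $R$-module, $I$ is generated by the images $h(M)$ with $h\in\Hom_R(M,R)$.

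For (i), let $\pi\colon I\to J$ be a surjective homomorphism of right $R$-modules and let $\iota\colon J\to R$ be the inclusion. For each $h\in \Hom_R(M,R)$, the image $h(M)$ lies in $I$, so we may regard $h$ as a map $h\colon M\to I$. The composite
\[
\iota\circ\pi\circ h\colon M\to R
\]
is then a homomorphism of right $R$-modules, and hence its image lies in $\Tr_R(M)=I$. Since $I=\sum_h h(M)$, we get
\[
J=\pi(I)=\sum_h \pi(h(M))=\sum_h (\iota\pi h)(M)\subseteq I.
\]
Nothing here requires $J$ to be an ideal beyond its being a submodule of $R_R$. The only point to check is that $\pi$ commutes with the sum, and this holds because $\pi$ is additive.

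For (ii), suppose $I\cong J$ as right $R$-modules. Then $J$ is a homomorphic image of $I$, and $I$ is a trace, so (i) gives $J\subseteq I$. Since $J$ is also the trace of some right module and $I$ is a homomorphic image of $J$ via the inverse isomorphism, applying (i) with the roles of $I$ and $J$ exchanged gives $I\subseteq J$. Hence $I=J$.

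There is no real obstacle in this argument. The one subtle step is recognising that an arbitrary homomorphism out of $I$ (not out of $R$) can be composed with the maps $h\colon M\to I$, which is what brings it back into $\Hom_R(M,R)$.
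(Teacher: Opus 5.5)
Your proof is correct and takes essentially the same route as the paper: the surjection $I\to J$ is composed with each $h\in\Hom_R(M,R)$ (viewed as a map into $I$), which gives a homomorphism $M\to R$ whose image lies in $\Tr_R(M)=I$. The paper writes the same argument elementwise, as $x=\sum_i f_i(m_i)$, and it deduces (ii) by the same symmetry argument you use.
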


\begin{Proof} $(i)$ Let $M$ be a right $R$-module such that $\mathrm{Tr}_R(M)=I$. Let $g\colon I\to J$ be an onto morphism. For any $x\in I$, there exists $m_1,\dots ,m_\ell \in M$ and $f_1,\dots ,f_\ell \in \mathrm{Hom}_R(M,R)$ such that $x=\sum _{i=1}^\ell f_i(m_i)$. Therefore, $g(x)=\sum _{i=1}^\ell g\circ f_i(m_i)\in I$.

$(ii)$ follows from $(i)$ and the symmetry of the hypothesis.
\end{Proof}

\begin{Ex} \label{idempotent} Let $R$ be a ring. Then any idempotent two-sided ideal $I$ of $R$ satisfies that $\Tr _R(I_R)=I$, so it is always a trace ideal of a right $R$-module. Therefore, idempotent ideals satisfy the conclusions of Lemma~\ref{isoequal}.

Let $J$ be a right ideal of $R$ that is pure in $R$, that is, $R/J$ is flat as a right $R$-module or, equivalently, for any $a\in J$, $a\in Ja$ (cf. \cite[Proposition~5]{azumaya}). Then, $I=RJ$ is an idempotent ideal of $R$. Proposition~\ref{chartraces} implies that, in this case, $I$ is also the trace of a projective right $R$-module.
\end{Ex}

Now we explain briefly the module theoretical context of TTF classes (Torsion, Torsion-Free classes) in which idempotent ideals have a crucial role. 

\begin{comment}

As a preparation, we first prove the following well known Lemma.

\begin{Lemma} \label{extri} Let $R$ be a ring, let $I$ be an ideal of $R$. Let $M$ be a right $R$-module. Then the following statements are equivalent,
\begin{itemize}
    \item[(1)] $\mathrm{Hom}_R(R/I,M)=0$ and $\mathrm{Ext}_R^1(R/I,M)=0$,
    \item[(2)] for any right $R/I$-module $N$, $\mathrm{Hom}_R(N,M)=0$ and $\mathrm{Ext}_R^1(N,M)=0$
\end{itemize}   
\end{Lemma}

\begin{Proof} We only need to prove that $(1)$ implies $(2)$. The assumption implies that for any set $A$, $\mathrm{Hom}_R((R/I)^{(A)},M)=0$ and also that $\mathrm{Ext}_R^1((R/I)^{(A)},M)=0$. Since ${}^\circ M$ is closed under homomorphic images we deduce that $\mathrm{Mod}-R/I\subseteq {}^\circ M$. 

Let $N$ be right $R/I$-module, therefore it fits in a short exact sequence of $R/I$-modules of the form 
\[0\to K\to (R/I)^{(A)}\to  N\to 0\]
applying the functor $\mathrm{Hom}_R(-, M)$ to it,  we obtain as part of the long exact sequence in homology the exact sequence 
\[0=\mathrm{Hom}_R(K,M)\to \mathrm{Ext}_R^1(N,M) \to \mathrm{Ext}_R^1((R/I)^{(A)},M)=0\]
which implies that    $\mathrm{Ext}_R^1(N,M)=0$. This finishes the proof. 
\end{Proof}

\end{comment}

\begin{Remark} \label{TTF} A class of  right R-modules is
called a \emph{torsion class} if it is closed under homomorphic images, direct sums, and group extensions, while it is called a \emph{torsion-free class} if it is closed under submodules, direct products, and group extensions.

For any class of modules $\mathcal{C}$, $${}^\circ \Ccal =\{X_R\mid \mathrm{Hom}_R(X,C)=0 \mbox{ for any $C\in \Ccal$}\}$$ is a torsion class and 
 $$ \Ccal ^\circ=\{Y_R\mid \mathrm{Hom}_R(C,Y)=0 \mbox{ for any $C\in \Ccal$}\}$$  
 is a torsion-free class. Dickson \cite{dickson} proved that all torsion classes and torsion-free classes can be defined this way.

 A class of right $R$-modules $\mathcal{C}$ is a TTF class if it is both a torsion class and a torsion-free class. Jans \cite{jans} proved that $\mathcal{C}$ is a TTF class if and only if is the class of right $R/I$-modules for an idempotent ideal $I$. Notice that then $${}^\circ \Ccal =\{ X_R\mid XI=X\} $$
 and 
 $${} \Ccal ^\circ =\{ Y_R\mid \mathrm{Ann}_M(I)=\{0\}\} .$$ The reader can check \cite{stenstrom} for a complete account on the topic.

The (right) Gabriel quotient category  of the $TTF$ class defined by an idempotent  ideal $I$ of $R$ is equivalent to the full subcategory $\mathcal{U}$ of $Mod$-$R$ whose objects are the right $R$-modules $X$ such that the natural morphism $X\to \Hom _R (I, X)$ is an isomorphism. Equivalently, $X\in \mathcal{U}$ if and only if $\Hom _R(R/I, X)=0=\mathrm{Ext}_R^1(R/I,X)$. The category $\mathcal{U}$ gives a particular kind of Grothendieck category, see \cite{roos}.

There are a number of papers, old and new, relating properties of the idempotent ideal $I$ with properties of the associated classes of right $R$-modules $({}^\circ \Ccal , \Ccal =\mathrm{Mod}-R/I ,\Ccal ^\circ)$, and of the quotient category $\mathcal{U}$. Between the very recent ones, we want to highlight the preprint by Martini, Parra, Saor\'\i n and Virili \cite{MPSV} in which they use these connections to construct  Grothendieck categories $\mathcal{U}$ with enough flat objects that does not have enough projective objects. For their construction they need examples of  ideals which are the trace of a projective right module that are not the trace of a projective left module.

In this setting of Gabriel localization, it is important to recall that a celebrated theorem by Gabriel and Popescu stated that any Grothendieck category is equivalent to the Gabriel quotient category of a module category over a (unital) ring $R$ by a hereditary torsion theory (that is, a torsion theory closed under submodules). 

\end{Remark}

\section{Traces of projective modules}

Let  $P$ be a  projective right module over a ring $R$. Then the Dual Basis Lemma implies that $P=P\mathrm{Tr}\, (P)$. Therefore, $\mathrm{Tr}\, (P)$ is the minimal ideal of $R$ having this property and it must be an idempotent ideal. J.~M.~Whitehead in \cite{whitehead} gave a characterization of traces of countably generated projective modules. Next results are further elaborations of his characterization and ideas.

Assume $P\cong E R^{(A)}$ for a suitable set $A$ and a suitable column-finite idempotent matrix  $E=(e_{ij})_{i,j \in A}$ with entries in $R$. It is not difficult to see that $\Tr_R(P)=\sum _{i,j}Re_{ij}R$. Since $E^2=E$, it follows that for each $j\in A$, $\sum _{i\in A}Re_{ij}= \sum _{i\in A}\Tr_R(P)e_{ij}$. That is, if $I$ is a trace ideal of a projective right $R$-module then it contains plenty of finitely generated (left) ideals $J\le I$ satisfying that $IJ=J$. This property characterizes trace ideals of projective modules.

\begin{Prop}[Propositions 2.4 and 2.6 in \cite{traces}]\label{chartraces} Let $R$ be a ring, and let $I$ be an ideal of $R$. Then the following statements are equivalent:
\begin{itemize}
\item[(i)] There exists a projective right $R$-module $P$ such that $I=\mathrm{Tr}\, (P)$.
\item[(ii)] For any finite subset $X$ of $I$ there exist a  finitely generated left ideal $J \le I$ such that $X\subseteq J$ and $IJ=J$.
\end{itemize}
Moreover, $I$ is the trace ideal of a countably generated projective right $R$-module if and only if there exists an ascending chain of finitely generated left ideals $(J_n)_{n\ge1}$ such that $J_{n+1}J_n=J_n$ and $I=\bigcup_{n\ge 1} J_nR$.
\end{Prop}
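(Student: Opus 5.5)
We prove the two implications of the equivalence and then the countable refinement. For (i)$\Rightarrow$(ii), we write $P$ as a direct summand of a free module $R^{(A)}$, so $P\cong ER^{(A)}$ for a column-finite idempotent matrix $E=(e_{ij})$. As recalled before the statement, $I=\Tr(P)=\sum_{i,j}Re_{ij}R$. Given a finite set $X\subseteq I$, each $x\in X$ is a finite sum $\sum r_k e_{i_kj_k}s_k$. The idea is to absorb the right factors $s_k$ by replacing columns of $E$ with columns of $ES$ for a suitable matrix $S$.

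Concretely, the column $j$ of $E$ generates $\varepsilon_j R$, where $\varepsilon_j=E\cdot(\text{$j$-th unit vector})$. The element $\varepsilon_j s$ lies in $P$ and is again of the form $E v$ with $v$ finitely supported. We consider the finitely many elements $p_1,\dots,p_m\in P$ given by the columns $\varepsilon_{j_k}s_k$. Let $J$ be the left ideal generated by all coordinates of $p_1,\dots,p_m$ together with all coordinates of finitely many further columns $\varepsilon_l$ that we are forced to add. Then $X\subseteq J$, because each $x$ is an $R$-combination of coordinates of the $p$'s. To get $IJ=J$ we use $p=Ep$: each coordinate of $p$ equals $\sum_i e_{\cdot i}p_i\in IJ'$, where $J'$ is generated by the coordinates of $p$, so that $J'\subseteq IJ'$. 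Since $E$ is column finite, the sum over $i$ is finite, and only finitely many coordinates are involved. Hence $J$, generated by the finitely many nonzero coordinates of $p_1,\dots,p_m$, satisfies $J\subseteq IJ\subseteq J$, and $J\le I$ because the coordinates of elements of $P$ lie in $I$. I expect this step to be essentially routine.

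For (ii)$\Rightarrow$(i), we build a projective module as a direct limit. If $J=\sum_{k=1}^n Ra_k$ with $IJ=J$, then each $a_k=\sum_l b_{kl}a_l$ with $b_{kl}\in I$ after rewriting. This gives an $n\times n$ matrix $B$ over $I$ with $a=Ba$, where $a$ is the column vector of the $a_k$. Iterating (ii), we choose finite sets and left ideals $J_1\subseteq J_2\subseteq\cdots$ with $J_{n}\subseteq IJ_{n}$ and, more precisely, $J_n\subseteq J_{n+1}J_n$: the finitely many elements of $I$ needed to express the generators of $J_n$ are put into $J_{n+1}$. This is where (ii) is applied to a growing finite set, and it yields matrices $B_n$ with entries in $J_{n+1}$ and $a^{(n)}=B_n a^{(n+1)}$-type relations. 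We then use the standard Whitehead/Bass construction: a countable direct limit $R^{m_1}\xrightarrow{}R^{m_2}\to\cdots$ of free modules, with maps given by left multiplication by matrices $C_n$ satisfying suitable idempotence-like relations, is projective. Indeed, $P_X=\varinjlim$ is flat, countably presented, and the relation $J_n=J_{n+1}J_n$ makes it split as a summand of $R^{(\N)}$ (a Mittag-Leffler argument: the transition maps factor through each other). Its trace is $\bigcup J_nR$, which contains $X$. Taking the direct sum over all finite $X$ gives $P$ with $\Tr(P)=I$, by Proposition~\ref{traceprop}(i).

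The countable statement follows the same path: one direction is a countably generated $P$ with countably many nonzero entries in $E$, enumerated to give $J_n$ as above; the other is the single direct limit constructed from the given chain. The main obstacle is verifying that the direct limit is projective with trace exactly $\bigcup_n J_nR$. I would try first to write down explicitly a column-finite idempotent matrix built from the relations $a^{(n)}=B_na^{(n+1)}$ (the Whitehead trick), and to check $E^2=E$ directly.
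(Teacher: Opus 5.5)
Your proof of (i)$\Rightarrow$(ii) is correct. It is essentially the remark the paper makes just before the statement; the paper gives no further proof and refers to \cite{traces}. Using the elements $\varepsilon_j s\in P$ is exactly what is needed to absorb the right factors. No extra columns are required, since $p_i=\sum_l e_{il}p_l$ with $e_{il}\in I$ already gives $J\subseteq IJ$. Your reduction of (ii)$\Rightarrow$(i) is also sound: you build a chain $J_1\subseteq J_2\subseteq\cdots$ with $a^{(n)}=B_na^{(n)}$ and $B_n\in M_{k_n}(J_{n+1})$, then take a direct sum over all finite $X$.

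The gap is the step that carries the whole content of (ii)$\Rightarrow$(i) and of the ``if'' half of the countable statement. You never specify the direct system. Nor do you show how $J_n=J_{n+1}J_n$ yields the factorization of transition maps that would make the limit projective with trace $\bigcup_n J_nR$; your last paragraph only announces an attempt. The obvious candidate, iterating the matrices $B_n$, fails, because $a^{(n)}=B_na^{(n)}$ only says that $B_n$ fixes one vector. For example, take $R=\mathbb{Z}$, $J_n=\mathbb{Z}$ with generators $a=(1,0)^T$, and $B_n=B=\mathrm{diag}(1,2)$. Then $a=Ba$, yet $\varinjlim(\mathbb{Z}^2\xrightarrow{B}\mathbb{Z}^2\xrightarrow{B}\cdots)\cong\mathbb{Z}\oplus\mathbb{Z}[1/2]$ is not projective.

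The missing idea is to make the generators themselves the entries of the transition maps.

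\emph{Construction.} Since the entries of $B_n$ lie in $J_{n+1}$, write $B_n=W_n(a^{(n+1)}\otimes 1_{k_n})$ with $W_n$ over $R$, where $a\otimes 1_k$ denotes the block column with blocks $a_q1_k$. Thus $a^{(n)}=W_n(a^{(n+1)}\otimes 1_{k_n})a^{(n)}$. Put $m_0=1$, $m_{n+1}=k_{n+1}m_n$, and $f_n=a^{(n+1)}\otimes 1_{m_n}\colon R^{m_n}\to R^{m_{n+1}}$. Then
\[
f_{n+1}f_n=\bigl((a^{(n+2)}\otimes 1_{k_{n+1}})a^{(n+1)}\bigr)\otimes 1_{m_n},
\]
so $f_n=g_nf_{n+1}f_n$ with $g_n=W_{n+1}\otimes 1_{m_n}$.

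\emph{Consequences.} For $\rho\in\Hom_R(R^{m_{n+1}},R)$, the maps $\rho f_n,\ \rho g_nf_{n+1},\ \rho g_ng_{n+1}f_{n+2},\dots$ are compatible. Hence every $\rho f_n$ extends to $M=\varinjlim(R^{m_n},f_n)$. The dual inverse system is therefore Mittag-Leffler, so the flat, countably presented module $M$ is projective. Moreover, $\Tr(M)$ is exactly the ideal generated by the entries of the $f_n$, namely $\bigcup_n J_nR$.

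\emph{The countable ``if'' direction.} Here $J_{n+1}J_n=J_n$ alone only gives coefficients in $J_{n+1}R$; the middle factors in $J_{n+1}RJ_n$ cannot be absorbed. So first check that $I=\bigcup_nJ_nR$ satisfies (ii): if $X\subseteq\sum_kJ_ns_k$, take $J=\sum_kJ_ns_k$. Then rerun the iteration, adding the generators of $J_n$ at step $n$ so that the resulting trace is all of $I$.
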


\begin{Cor}[Corollary 2.7 and Corollary 2.8 in \cite{traces}] \label{tracefg} Let $R$ be a ring. 
\begin{itemize}
    \item[(i)] Let $J$ be a finitely generated left
ideal such that $J^2=J$. Then $JR$ is the trace of a (countably
generated) projective right $R$-module.
\item[(ii)] Assume that $R/J(R)$ satisfies the ascending chain condition on two-sided ideals
(e.g. $R/J(R)$ left or right noetherian). Then an ideal $I$ is the
trace of a (countably generated) projective right $R$-module if and
only if there exists a finitely generated left ideal $J$ such that
$J^2=J$ and $I=JR$. In particular, all trace ideals of projective right $R$-modules  are traces of countably generated projective modules.
\end{itemize}
\end{Cor}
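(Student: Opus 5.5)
For (i) I would use Proposition~\ref{chartraces}, whose condition (ii) is available to us as the characterization of traces of projective right modules. Let $J$ be a finitely generated left ideal with $J^2=J$, and put $I=JR$, which is a two-sided ideal. Then $IJ=JRJ\supseteq J^2=J$, and $IJ\subseteq J$ because $J$ is a left ideal; hence $IJ=J$. Now take a finite subset $X=\{x_1,\dots,x_k\}\subseteq I$. Each $x_i$ is a finite sum $\sum_l j_{il}r_{il}$ with $j_{il}\in J$ and $r_{il}\in R$. I would then look for a finitely generated left ideal $J'$ with $X\subseteq J'\le I$ and $IJ'=J'$.

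The natural candidate is $J'=J+\sum_{i,l} Rj_{il}r_{il}$, which is finitely generated. Its summands $Rj_{il}r_{il}\subseteq J r_{il}$ are right translates of $J$. Since $IJ=J$, we get $I(Jr)=(IJ)r=Jr$. So the right candidate is $J'=J+\sum_{i,l}Jr_{il}$: this is a finitely generated left ideal, because $J$ is finitely generated and each $Jr$ is generated by the $gr$ for $g$ running over generators of $J$. It contains $X$, it lies in $JR=I$, and it satisfies $IJ'=IJ+\sum IJr_{il}=J'$. Proposition~\ref{chartraces} then gives that $I$ is the trace of some projective right module.

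To get countable generation I would use the chain criterion. Set $J_n=J$ for every $n$. Then $J_{n+1}J_n=J^2=J=J_n$ and $\bigcup_n J_nR=JR=I$, so the "Moreover" part of Proposition~\ref{chartraces} applies. This settles (i).

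For (ii), the "if" direction is exactly (i). For "only if", let $I$ be a trace of a projective right module, and write $\bar R=R/J(R)$. By Proposition~\ref{chartraces} there is a family of finitely generated left ideals $J_\alpha\le I$ with $IJ_\alpha=J_\alpha$ whose union is $I$ (each finite subset of $I$ lies in some $J_\alpha$). The two-sided ideals $J_\alpha R$ are directed, since finitely many $J_\alpha$ sit in a single $J_\beta$. Their images in $\bar R$ are therefore directed too, and by ACC on two-sided ideals of $\bar R$ there is one $J_0=J_\beta$ with $J_0R+J(R)\supseteq I$.

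The main obstacle is to remove $J(R)$ and to get idempotence of the left ideal. I would take $J_1\supseteq J_0$ with $IJ_1=J_1$ and try to show $J_1R=I$. For $x\in I$ there is a finitely generated $J_x\ni x$ with $IJ_x=J_x$. Then $J_x\subseteq J_1R+J(R)$, so $J_x=IJ_x\subseteq I(J_1R)+IJ(R)$, but passing from this to $J_x\subseteq J_1R$ via Nakayama is not straightforward.

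An alternative is to work with the finitely generated left module $M=(J_1+J_x)/J_1$. I expect $IM=M$, and also $M\subseteq (J_1R+J(R))$ modulo $J_1$. The aim would then be to show $M=J(R)M$, after which Nakayama gives $M=0$ and so $x\in J_1$. If this direct route fails, I would fall back on citing \cite[Corollary~2.8]{traces}, where the same passage is handled by noting that traces of projectives are determined modulo $J(R)$.

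Finally, to replace $J_1$ by an idempotent left ideal: from $IJ_1=J_1$ and $I=J_1R$ we get $J_1=J_1RJ_1$. So $J=RJ_1RJ_1$ is not the right object. Instead I would take $J'=\sum_k J_1r_k$ with finitely many $r_k$ chosen so that the generators of $J_1$ lie in $\sum_k J_1r_kJ_1$. Then $J'$ is finitely generated and $J'^2\supseteq J_1$. Adjusting the $r_k$ so that also $J'\subseteq J'^2$ then gives $J'^2=J'$ and $J'R=I$.
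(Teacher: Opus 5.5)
Part (i), the ``if'' direction of (ii), and the ACC reduction to a finitely generated left ideal $J_0\le I$ with $IJ_0=J_0$ and $I\subseteq J_0R+J(R)$ are all fine. The genuine gap is the next step, removing $J(R)$, which you leave open.

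Your first attempt multiplies on the wrong side. You should write
\[J_x=IJ_x\subseteq (J_0R+J(R))J_x\subseteq (J_x\cap J_0R)+J(R)J_x,\]
using that $J_0R$ is a two-sided ideal and $J_x$ is a left ideal. Then the finitely generated left module $J_x/(J_x\cap J_0R)$ equals $J(R)$ times itself, so Nakayama gives $x\in J_x\subseteq J_0R$, and hence $I=J_0R$. This is exactly Corollary~\ref{modjR}(i) with $I'=J_0R$; there $I'$ need not be a trace, so you could have invoked it directly.

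Your alternative with $M=(J_1+J_x)/J_1$ cannot work, because it aims at $x\in J_1$, which is false in general. For $R=M_2(k)$, $I=R$ and $J_1=Re_{11}$ one has $IJ_1=J_1$ and $J_1R=I$, but $e_{22}\notin J_1$. Falling back on \cite[Corollary~2.8]{traces} is circular, since that is the statement being proved.

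In the last step your construction is right, but the verification is not. The claim $J'^2\supseteq J_1$ requires $J_1\subseteq J'$, which fails in general. In the same example, $e_{11}=e_{11}e_{12}e_{21}\in J_1e_{12}J_1$, so the single choice $r=e_{12}$ is admissible. It gives $J'=Re_{12}$, which does not contain $J_1$. Moreover, ``adjusting the $r_k$'' is never specified.

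No adjustment is needed. If the generators of $J_1$ lie in $\sum_k J_1r_kJ_1=J'J_1$, then $J'J_1=J_1$, because $J'J_1\subseteq RJ_1=J_1$. Hence
\[J'^2=\sum_l (J'J_1)r_l=\sum_l J_1r_l=J'.\]
Finally, $J_1=J'J_1\subseteq J'R\subseteq J_1R$ gives $J'R=I$.
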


 From Proposition~\ref{chartraces} and Nakayama's Lemma it follows  that the only trace ideal of a projective right (or left) $R$-module  contained in the Jacobson radical is the zero ideal. This is a well-known result; for an alternative approach, see \cite[Proposition~17.14]{andersonfuller}. Next result shows that even a stronger statement holds.

\begin{Cor}[Corollary 2.9 in \cite{traces}] \label{modjR} Let $R$ be a ring, and let $I$ and $I'$ be ideals of $R$. Assume that $I$ is the trace of
a  projective right $R$-module. Then:
\begin{itemize}
    \item[(i)] If
$I+J(R)\subseteq I'+J(R)$ then $I\subseteq I'$.

\item[(ii)] If $I'$ is also the trace ideal of a   projective right $R$ module  or the trace of a projective left $R$-module then $I+J(R)=I'+J(R)$ if and
only if $I=I'$.
\end{itemize}
\end{Cor}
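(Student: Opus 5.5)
The plan is to prove (i) directly from the characterization in Proposition~\ref{chartraces}, and then deduce (ii) from (i) together with the elementary fact that $J(R)$ is contained in every ideal of the form $I'+J(R)$.

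For (i), suppose $I+J(R)\subseteq I'+J(R)$ and take $x\in I$. Apply Proposition~\ref{chartraces}(ii) to the finite set $\{x\}$. This gives a finitely generated left ideal $J=Ra_1+\dots+Ra_n\le I$ with $x\in J$ and $IJ=J$. Each $a_i$ then lies in $IJ$, so we can write
\[a_i=\sum_{j} b_{ij}a_j \quad\text{with } b_{ij}\in I.\]
Now use the hypothesis entrywise: write $b_{ij}=c_{ij}+r_{ij}$ with $c_{ij}\in I'$ and $r_{ij}\in J(R)$. Set $B=(b_{ij})$, $C=(c_{ij})$, $D=(r_{ij})$ and let $a$ be the column vector $(a_1,\dots,a_n)^t$. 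The relations say $a=Ba$, that is, $(1-D)a=Ca$.

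Since $D\in M_n(J(R))=J(M_n(R))$, the matrix $1-D$ is invertible in $M_n(R)$. Hence $a=(1-D)^{-1}Ca$. The entries of $C$ lie in the two-sided ideal $I'$, so every entry of $(1-D)^{-1}C$ lies in $I'$, and therefore each $a_i\in I'$. Consequently $J\subseteq I'$, and in particular $x\in I'$, which proves $I\subseteq I'$.

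This matrix Nakayama step is the only substantive point. The key observation is that we need only the two-sided ideal property of $I'$; we do not need to know that $I'$ is itself a trace ideal.

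For (ii), one direction is trivial: if $I=I'$ then $I+J(R)=I'+J(R)$. Conversely, suppose $I+J(R)=I'+J(R)$. Part (i) gives $I\subseteq I'$. For the reverse inclusion we need the symmetric version of (i) for $I'$.

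If $I'$ is the trace of a projective right module, (i) applies verbatim with the roles of $I$ and $I'$ exchanged, giving $I'\subseteq I$.

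If instead $I'$ is the trace of a projective left module, we use the left-handed form of Proposition~\ref{chartraces}, obtained by applying it to $R^{op}$. It says that each finite subset of $I'$ lies in a finitely generated right ideal $J'\le I'$ with $J'I'=J'$. Running the same argument with row vectors, we write $a=aB$ with the entries of $B$ in $I'$, split $B=C+D$ as before, and get $a(1-D)=aC$. This shows $J'\subseteq I$, so $I'\subseteq I$.

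In either case $I=I'$.

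The only thing to check carefully is the side bookkeeping: for right projectives the generating ideals are left ideals and the matrix acts on the left of a column vector, while for left projectives everything is mirrored. With that in place, the proof is short.
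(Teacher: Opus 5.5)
Your proof is correct and is essentially the argument the paper has in mind: the paper cites \cite{traces} and indicates that the result comes from Proposition~\ref{chartraces} combined with Nakayama's Lemma, and your inversion of $1-D$ in $M_n(R)$ is just Nakayama's Lemma applied to the finitely generated left module $(J+I')/I'$, which equals $J(R)\cdot\bigl((J+I')/I'\bigr)$ because $J=IJ\subseteq I'+J(R)J$. The only slip is notational: in the left-projective case of (ii), the matrix $C$ should have entries in $I$ (using $I'\subseteq I+J(R)$), not in $I'$, and this is exactly what your conclusion $J'\subseteq I$ relies on.
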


We will denote by $\mathcal T_r(R)$ (resp. $\mathcal T_\ell(R)$) the set of ideals of the ring $R$ that are traces of countably generated projective right (resp. left) $R$-modules. When we simply write $\mathcal T(R)$ we mean $\mathcal T_r(R)$.

By Proposition~\ref{traceprop}, $\mathcal T _r(R)$ is an additive monoid with a partial  order relation given by the inclusion. Moreover, for any two-sided ideal $J$ of $R$, the assignment $I\mapsto I+J/J$ defines a morphism of monoids $\varphi ^J_r\colon \mathcal{T}_r (R)\to \mathcal{T}_r (R/J)$. In next proposition, we summarize some of the properties of these morphisms.

\begin{Prop}[Theorem~3.1 and Corollary~2.9 in \cite{traces}] \label{mapstr} Let $R$ be a ring. With the notation above
\begin{itemize}
    \item[(i)]  If $J\in \mathcal{T}_r(R)$ then $\varphi ^J_r\colon \mathcal{T}_r (R)\to \mathcal{T}_r (R/J)$ is an onto morphism of ordered monoids.
    \item[(ii)] If $J=J(R)$ then $\varphi ^{J(R)}_r\colon \mathcal{T}_r (R)\to \mathcal{T}_r (R/J(R))$ is an injective embedding of ordered monoids, that is, $I\le I'$ if and only if $I+J(R)/J(R)\le I'+J(R)/J(R)$
\end{itemize}
\end{Prop}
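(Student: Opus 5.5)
Part (i) reduces to showing that $I\mapsto \bar I=(I+J)/J$ is surjective onto $\mathcal T_r(R/J)$. Well-definedness and the monoid/order properties come from Proposition~\ref{traceprop}(i),(iii): if $I=\Tr(P)$ with $P$ projective, then $\bar I=\Tr_{R/J}(P/PJ)$. The substantive input is the lifting theorem of \cite{traces}. When $J$ is the trace of a projective right module, every (countably generated) projective right $R/J$-module $\overline{Q}$ is isomorphic to $Q/QJ$ for some (countably generated) projective right $R$-module $Q$.

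Granting this lifting, surjectivity follows at once. Given $\bar I'=\Tr_{R/J}(\overline Q)$, choose $Q$ with $Q/QJ\cong \overline Q$. Proposition~\ref{traceprop}(iii) then gives $(\Tr_R(Q)+J)/J=\bar I'$. Order-preservation is clear because $I\subseteq I'$ implies $I+J\subseteq I'+J$. If the lifting cannot simply be cited, I would reprove it for countably generated modules using Proposition~\ref{chartraces}. Write $\bar I'=\bigcup_n \bar J_n \bar R$ with $\bar J_{n+1}\bar J_n=\bar J_n$. I would lift the generators of $\bar J_n$ to $R$, and add the finitely generated left ideals witnessing $J\in\mathcal T_r(R)$, so that the relations $J_{n+1}J_n=J_n$ hold exactly in $R$ and not just modulo $J$. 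This is the step I expect to be the main obstacle: the error terms lie in $J$ and must be absorbed using $J=\Tr(P)$ and the identity $JJ'=J'$ for the finitely generated left ideals $J'\le J$.

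For part (ii), take $I,I'\in\mathcal T_r(R)$. If $I\subseteq I'$, the images are clearly ordered the same way. Conversely, suppose $I+J(R)\subseteq I'+J(R)$. Since $I$ is the trace of a projective right module, Corollary~\ref{modjR}(i) gives $I\subseteq I'$. Applying this in both directions shows that equal images force $I=I'$, which is Corollary~\ref{modjR}(ii); so the map is injective and reflects the order. It remains to recall why Corollary~\ref{modjR}(i) holds, using Proposition~\ref{chartraces}. Take a finite $X\subseteq I$ and a finitely generated left ideal $L\le I$ with $X\subseteq L$ and $IL=L$. Then
\[L=IL\subseteq (I'+J(R))L\subseteq I'+J(R)L.\]
Hence $L\subseteq I'+J(R)L$, so the finitely generated left module $(L+I')/I'$ equals $J(R)$ times itself. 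By Nakayama's Lemma it vanishes, which gives $X\subseteq L\subseteq I'$.
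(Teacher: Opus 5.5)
Your proposal is correct and matches the paper, which gives no independent argument: it cites the lifting theorem of \cite{traces} (Theorem~3.1, recorded later as Lemma~\ref{traces.quo}) for the surjectivity in (i), and Corollary~\ref{modjR} for (ii). Your derivation of Corollary~\ref{modjR}(i) from Proposition~\ref{chartraces} and Nakayama's Lemma is sound, since $I'$ is two-sided and so $(L+I')/I'$ is a finitely generated left module equal to $J(R)$ times itself. The re-proof of the lifting that you sketch as a fallback is viable but unnecessary, given the citation.
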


Note that, by Corollary~\ref{tracefg}, if $R/J(R)$ is right or left noetherian then $\mathcal T_r(R)$ coincides with the set of ideals of $R$ that are traces of arbitrary projective right $R$-modules. As our examples are going to be over semilocal rings, that is, $R/J(R)$ is semisimple artinian,  then $\mathcal{T}_r(R)$ is the set of traces of arbitrary projective right $R$-modules and, in view of Proposition~\ref{mapstr}, $\mathcal{T}_r(R)$ is finite. In Examples~\ref{tracesd1d2}, we  explain a useful way to describe $\mathcal{T}_r(R)$ for semilocal rings.

\section{Symmetries and asymmetries}

We are interested in describing examples of rings such that $\mathcal T _r(R)\neq \mathcal T _\ell(R)$. We recall some results that would be helpful to this aim. 

First we note that traces of finitely generated projective right  modules are also traces of finitely generated projective left $R$-modules, so possible difference between trace ideals of right/left projective modules are always related to infinitely generated projective modules.

\begin{Prop} \label{trfg} Let $R$ be a ring. Let $P$ be a finitely generated projective right $R$-module and let $I$ be its trace. Then:
\begin{enumerate}
    \item[(i)] $\mathrm{Hom}_R(P,R)$ is a finitely generated projective left $R$-module and its trace is also $I$. Therefore $I\in \mathcal T_r(R) \cap \mathcal T_\ell(R)$.
    \item[(ii)] Let $X$ be a finitely generated projective right $R/J(R)$-module such that $P/PJ(R)\oplus X \cong \left( R/J(R)\right) ^{(A)}$ for some set $A$. Then there exists a  projective right $R$-module $Q$ such that $Q/QJ(R)\cong X$. In particular, $\Tr _{R/J(R)} (X) =I+J(R)/J(R)$  for some ideal $I$ which is the trace of   a projective right module over $R$. 
\end{enumerate}
\end{Prop}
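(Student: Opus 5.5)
For (i), I would realize $P$ as a direct summand of a finitely generated free module. Write $P\oplus P'\cong R^n$, so that $P\cong eR^n$ for an idempotent matrix $e\in M_n(R)$; equivalently $P$ is the image of left multiplication by $e$ on column vectors. Then $P^*=\Hom_R(P,R)$ is identified with the row vectors $R^n e$, which is a direct summand of the free left module ${}_RR^n$. Hence $P^*$ is finitely generated projective.

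For the traces, I would use the formula recalled before Proposition~\ref{chartraces}: the trace of $eR^n$ is $\sum_{i,j}Re_{ij}R$. By the symmetric computation on the left, the trace of $R^ne$ is the same two-sided ideal $\sum_{i,j}Re_{ij}R$. Alternatively, one can note that $\Hom_R(P^*,R)\cong P$ by reflexivity of finitely generated projectives. Under this identification, the evaluation pairings $P^*\times P\to R$ computing $\Tr(P)$ and $\Tr(P^*)$ coincide, so both traces are the ideal generated by all $f(x)$. In either form this gives $I\in\mathcal T_r(R)\cap\mathcal T_\ell(R)$. This step is routine; the countability required in the definition of $\mathcal T$ is automatic, since the modules are finitely generated.

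For (ii), set $\bar R=R/J(R)$. I would use the lifting theorem for projective modules modulo the Jacobson radical for countably generated modules, in the form: a projective $\bar R$-module lifts if it is a direct summand of the reduction of a free module. The plan is to pass to a single projective over $R$ whose reduction is $X$ plus a free part, and then peel off $X$.

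\begin{enumerate}
\item Take $A$ infinite if necessary, replacing $A$ by $A\sqcup\N$ and $X$ by $X\oplus\bar R^{(\N)}$. The original $X$ is then recovered as the complement of a free summand.
\item Use an Eilenberg-type swindle. Since $P\oplus P'\cong R^n$, the reduction $\bar P\oplus\bar P'$ is free, and $X\cong\bar P'\oplus(\text{free})$ up to countable free summands. Concretely, $X\oplus\bar P\cong\bar R^{(A)}$ and $\bar P'\oplus\bar P\cong\bar R^n$ give $X\oplus\bar R^{(\N)}\cong X\oplus(\bar P\oplus\bar P')^{(\N)}\cong\bar R^{(A)}\oplus\bar P'^{(\N)}$.
\item The right-hand side is the reduction of the projective $R$-module $R^{(A)}\oplus P'^{(\N)}$.
\end{enumerate}

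This is the main obstacle: getting $X$ itself, rather than $X$ plus a free module, as a reduction. The reduction map is injective on traces by Proposition~\ref{mapstr}(ii), but on isomorphism classes cancellation can fail. So I would invoke the result of \cite{traces} that projectives lift modulo $J(R)$ when their reduction is a summand of a lifted module. I would apply it to $X$ as a summand of $\overline{R^{(A)}\oplus P'^{(\N)}}$, which is exactly what the paper cites for Theorem~3.1. This produces $Q$ with $Q/QJ(R)\cong X$.

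Finally, the trace statement follows from Proposition~\ref{traceprop}(iii): $\Tr_{\bar R}(X)=(\Tr_R(Q)+J(R))/J(R)$, and $\Tr_R(Q)$ is the trace of a projective right module.
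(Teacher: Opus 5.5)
Your proof of (i) is fine. In (ii), however, the decisive step fails. The principle you invoke at the end, that a projective $\bar R$-module lifts modulo $J(R)$ as soon as it is a direct summand of the reduction of a projective (even free) $R$-module, is false.

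Take $R$ a commutative principal ideal domain with exactly two maximal ideals, as in Example~\ref{tracesd1d2}(3)(i). Then $\bar R\cong D_1\times D_2$ and every projective $R$-module is free. So the summand $D_1$ of $\bar R$ is not $Q/QJ(R)$ for any projective $Q$, even though $D_1\oplus\bar R^{(\N)}\cong\bar R^{(\N)}$ lifts.

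Lemma~\ref{traces.quo} (Theorem~3.1 of \cite{traces}) does not rescue this either. It lifts modulo the trace ideal of a projective module, and the only such ideal contained in $J(R)$ is $0$.

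Your swindle cannot repair the gap. For \emph{any} finitely generated summand $X$ of a free $\bar R$-module one has $X\oplus\bar R^{(\N)}\cong\bar R^{(\N)}$. So the swindle never really uses the hypothesis that the complement of $X$ is $P/PJ(R)$, and peeling off an infinite free summand is exactly where lifting breaks down, as the example shows. Incidentally, since $X$ and $P/PJ(R)$ are finitely generated, $A$ is already finite, so your step 1 moves in the wrong direction.

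The missing idea is to exploit that the complement of $X$ is the reduction of the \emph{finitely generated} projective module $P$, via Nakayama's Lemma.

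Let $p\colon \bar R^{(A)}\cong P/PJ(R)\oplus X\to P/PJ(R)$ be the projection. Since $R^{(A)}$ is projective, the composite of $R^{(A)}\to\bar R^{(A)}$ with $p$ lifts to a map $\phi\colon R^{(A)}\to P$. Then $\phi(R^{(A)})+PJ(R)=P$, so $\phi$ is onto because $P$ is finitely generated. Hence $\phi$ splits, and $R^{(A)}=Q\oplus P_0$ with $Q=\ker\phi$ and $P_0\cong P$.

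Reducing modulo $J(R)$, the induced map $\bar\phi$ equals $p$. It vanishes on $Q/QJ(R)$ and is an isomorphism on the image of $P_0$. Therefore $Q/QJ(R)=\ker\bar\phi\cong\ker p\cong X$.

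The trace statement then follows from Proposition~\ref{traceprop}(iii), as you wrote. This is the same lift-and-Nakayama step the paper uses later to show that finitely generated projective modules over $\End_R(U)$ are free.
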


Projective modules finitely generated modulo their Jacobson radical that are not finitely generated are our main (and, essentially, only) source of examples satisfying that $\mathcal T _r(R)\neq \mathcal T _\ell(R)$. The reason for such an asymmetry is explained in the following result which is just a slight variation of Theorem~7.1 and Proposition~7.3  in \cite{FHS2}.

\begin{Prop} \label{trfgmodjr} Let $R$ be a ring, and let $P$ be a countably generated projective module that is finitely generated modulo $J(R)$. Let $X$ be an  $R/J(R)$-module such that $P/PJ(R)\oplus X\cong \left( R/J(R)\right) ^n$. Then:
\begin{itemize}
    \item[(i)] There exists a countably generated projective left $R$-module $P'$ such that $P'/J(R)P'\cong \mathrm{Hom}_{R/J(R)} (X, R/J(R))$. Therefore $\Tr _{R/J(R)} (X)+J(R)/J(R)=I+J(R)/J(R)$ where $I$ is the trace of $P'$.
    \item[(ii)] $P$ is a pure submodule of $R^n_R$ and $P'$ is a pure submodule of ${}_RR^n$.
    \item[(iii)] $P$ is finitely generated if and only if so is $P'$.
    \item[(iv)] $P$ is finitely generated if and only if $\mathrm{Tr}_R(P)$ is also the trace of a countably generated projective left $R$-module that is finitely generated modulo its radical.
\end{itemize}
\end{Prop}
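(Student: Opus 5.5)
We prove Proposition~\ref{trfgmodjr} by writing $P$ as the image of a countable idempotent matrix and building $P'$ from the transposed data. Since $P$ is countably generated projective and $P/PJ(R)$ is a direct summand of $(R/J(R))^n$, we first lift the finitely many generators modulo $J(R)$. This gives a map $R^n\to P$ that is onto modulo $J(R)$. Because $P$ is countably generated projective, we then write $P$ as a direct limit. Concretely, we use the description from \cite{FHS2} and Whitehead: $P\cong \varinjlim (R^n\xrightarrow{A_1}R^n\xrightarrow{A_2}R^n\to\cdots)$, where each $A_i$ is an $n\times n$ matrix and, modulo $J(R)$, each $\overline{A_i}$ is the same idempotent $\overline{E}$ with image $P/PJ(R)$. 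The key input from \cite{FHS2} is that such a countably generated projective module, finitely generated modulo the radical, arises this way with the $A_i$ chosen so that $P$ is the union of a chain of images inside a suitable $R^n$.

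We next define $P'$ as the direct limit of the transposed system of left modules, with $A_i$ replaced by $1-A_i$ acting on the other side. More precisely, we take the left module $\varinjlim({}_RR^n\xrightarrow{\cdot(1-A_i)}\cdots)$, reindexed so that the arrows compose correctly. Modulo $J(R)$, the maps $1-\overline{A_i}$ equal the complementary idempotent. Its image is therefore $\mathrm{Hom}_{R/J(R)}(X,R/J(R))$, since $\mathrm{Hom}(-,R/J(R))$ of the decomposition $P/PJ(R)\oplus X$ gives the dual decomposition of ${}_{R/J(R)}(R/J(R))^n$.

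The main obstacle is showing that $P'$ is projective. We plan to use Whitehead's criterion, in the form of Proposition~\ref{chartraces}: a countable direct limit of free modules whose transition maps eventually factor idempotently is projective. The trick from \cite{FHS2} is that the relations $A_{i+1}A_i\equiv A_i$, which encode $P$'s projectivity, translate into the corresponding relations for the maps $1-A_i$ up to terms in $J(R)$. These can be corrected because $1+J(R)$ consists of units. We would follow the proof of Theorem~7.1 in \cite{FHS2} here.

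Purity in (ii) is obtained by realizing $P$ and $P'$ as direct limits of split monomorphisms of finitely generated free modules inside $R^n$. For $P\le R^n$, a direct limit of direct summands is pure.

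For (iii), if $P$ is finitely generated, then the system stabilizes up to isomorphism, so the complement system stabilizes as well. Symmetry gives the converse.

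For the trace statement in (i), Proposition~\ref{traceprop}(iii) gives $\Tr(P')+J(R)/J(R)=\Tr_{R/J(R)}(P'/J(R)P')$, which is the trace of the dual of $X$. This equals $\Tr_{R/J(R)}(X)$, because the trace of a finitely generated projective module coincides with that of its dual by Proposition~\ref{trfg}(i).

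For (iv), suppose first that $P$ is finitely generated. Then Proposition~\ref{trfg}(i) makes $\mathrm{Hom}_R(P,R)$ a finitely generated projective left module with the same trace. Conversely, suppose $\Tr(P)=\Tr(Q)$ with $Q$ a left projective module that is finitely generated modulo $J(R)$. Applying (i) to $Q$ (left-right swapped) produces a right module whose trace agrees with $\Tr(P)$ modulo $J(R)$. Corollary~\ref{modjR} then forces equality of the traces, and finally (iii) for both dualities yields that $P$ is finitely generated.
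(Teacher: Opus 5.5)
Your argument for the trace statement in (i) and for the ``only if'' half of (iv) is fine. The core of the proposition, however, is producing $P'$ and proving it projective, and your sketch does not achieve this. Since the paper simply defers to \cite[Theorem~7.1, Proposition~7.3]{FHS2}, your sketch has to carry this step itself.

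\textbf{The construction of $P'$.} Write $\bar R=R/J(R)$ and $P=\bigcup_k A_kR^n\subseteq R^n$, where $A_k$ is the identity on a finitely generated $P_k$ and $A_kR^n\subseteq P_{k+1}$. The relation you actually have is $A_{k+1}A_k=A_k$, i.e.\ $(1-A_{k+1})(1-A_k)=1-A_{k+1}$. So the submodules $R^n(1-A_k)$ of ${}_RR^n$ \emph{decrease}.
\begin{itemize}
\item A left direct system has to run $y\mapsto y(1-A_1)(1-A_2)\cdots$. For it to be a nested union you would need $A_kA_{k+1}=A_{k+1}$, and nothing provides this.
\item An infinite chain cannot be reversed by reindexing. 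Reversing the arrows gives an inverse system, whose limit is $\mathrm{Hom}_R(R^n/P,R)$.
\item ``Correcting relations modulo $J(R)$ by units of $1+J(R)$'' is not a mechanism. All relations hold modulo $J(R)$ automatically, since every $\bar A_k=\bar E$, while projectivity of a countable direct limit is not detected modulo the radical.
\end{itemize}
For example, over $R=\mathbb{Z}_{(p)}$ the presentation $P=R=\varinjlim(R\xrightarrow{1+p}R\xrightarrow{1+p}\cdots)$ fits your description with $X=0$. Yet your recipe returns $\varinjlim(R\xrightarrow{\cdot(-p)}R\to\cdots)\cong\mathbb{Q}$, which is not projective.

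\textbf{The missing idea.} You need to build, from the projectivity of $P$, an honest unit putting the data in Z\"oschinger's form $A^2=UA$, and then use $U-A$, not $1-A$, on the left. This is exactly Lemma~\ref{productzero} and Proposition~\ref{rightleft} applied over $M_n(R)$. One way to do it:
\begin{itemize}
\item Take $j$ with $P_j+PJ(R)=P$, lift $P\to P/PJ(R)$ to $\varphi\colon P\to P_j$, and put $\Phi=1+(\varphi-1)A_{j+1}\in 1+M_n(J(R))\subseteq\mathrm{GL}_n(R)$.
\item Then $\Phi=\varphi$ on $P_{j+1}\supseteq A_jR^n$, and $A_j$ fixes $P_j\supseteq\varphi(P)$. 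Hence $A_j\Phi A_j=\Phi A_j$, so $A=\Phi A_j$ satisfies $A^2=\Phi A$.
\item Consequently $P'=\bigcup_k R^n(1-\Phi^kA_j\Phi^{-k})$ is countably generated projective (by the telescoping dual basis) and pure in ${}_RR^n$.
\item It satisfies $P'/J(R)P'\cong\bar R^n(1-\bar E)\cong\mathrm{Hom}_{\bar R}(X,\bar R)$, taking $\ker\bar E=X$.
\end{itemize}

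\textbf{Smaller points.}

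In (ii), the stages $A_kR^n$ are not direct summands of $R^n$: a summand of $R^n$ inside $P$ mapping onto $P/PJ(R)$ would equal $P$. Purity must instead come from Villamayor's criterion, since every finite subset of $P$ is fixed by some $A_k\colon R^n\to P$.

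In (iii), ``the system stabilizes'' is not an argument. Instead: if $P'$ is finitely generated, then $\mathrm{Hom}_R(P',R)$ lifts $X$. So $P\oplus\mathrm{Hom}_R(P',R)$ is projective with radical factor $\bar R^n$, hence isomorphic to $R^n$ (by \cite{pavel}, or directly via Nakayama). The converse is symmetric.

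Your converse in (iv) breaks. Applying (i) to $Q$ gives a right module lifting the dual of the \emph{complement} of $Q/J(R)Q$. Its trace is governed by that complement, not by $\mathrm{Tr}_R(Q)=\mathrm{Tr}_R(P)$. Moreover, equal traces would not force finite generation anyway.

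In fact the ``if'' direction of (iv) fails as literally stated. Take $P$ the right ideal of Proposition~\ref{gfzero}(ii). Then $S\oplus P$ is countably generated, finitely generated modulo $J(S)$, and not finitely generated, while its trace $S$ is the trace of ${}_SS$. What (i) together with the argument for (iii) does give is: $P$ is finitely generated if and only if $\mathrm{Hom}_{\bar R}(P/PJ(R),\bar R)$ lifts to a countably generated projective left $R$-module.
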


In the next examples we explain some tools to describe trace ideals over semilocal rings. We specially emphasize on the case in which $R/J(R)$ is the product of two division rings.

\begin{Exs} \label{tracesd1d2} (1) Assume that  $R$ is a semisimple artinian ring such that $R= M_{n_1}(D_1)\times \cdots \times M_{n_k}(D_k)$, where $D_1,\dots ,D_k$ denote division rings. Then all two-sided ideals of $R$ are traces of right (left) projective modules. Therefore, there is a bijective correspondence $\alpha  \colon \mathcal T _r (R)\to \mathcal{P}(\{1,\dots ,k\})$ by setting $$\alpha  (I)=\{i\in \{1,\dots ,k\} \mbox{ such that }e_i=(0,\dots ,1^{i)}, \dots ,0)\in I\} $$
for any $I\in \mathcal T _r (R)$. Note that $\alpha  (R)=\{1,\dots ,k\}$ and $\alpha  (0)= \emptyset$.

Of course,  in this case, $\mathcal T _r (R)=\mathcal T _\ell (R)$ and $\alpha$  describes both at the same time.

Let $V_1,\dots ,V_k$ be a set of representatives of the isomorphism classes of simple right $R$-modules such that $\mathrm{End}_R(V_i)\cong D_i$, for $i=1,\dots ,k$. Let  $P$ be a projective right $R$-module,  then  $\mathrm{Tr}_R(P)=I$ if and only if $P\cong \oplus _{i\in \alpha  (I)} V_i^{(A_i)}$ for suitable non-empty sets $A_i$.

(2) Assume  $R/J(R)\cong M_{n_1}(D_1)\times \cdots \times M_{n_k}(D_k)$, for suitable division rings $D_1,\dots ,D_k$. In view of Corollary~\ref{modjR}, the bijection $\alpha$ from $(1)$ induces an injection
\[\beta _r \colon \mathcal T _r (R)\to \mathcal{P}(\{1,\dots ,k\})\] by setting 
 $\beta _r (I)=\alpha (I+J(R)/J(R))$ for any $I\in \mathcal T _r (R)$. Similarly,  there is an embedding  \[\beta _\ell \colon \mathcal T _\ell (R)\to \mathcal{P}(\{1,\dots ,k\}).\] 
 
Denote by $\mathcal{T}_{r\ell} (R)=\mathcal T _r (R)\cup \mathcal T _\ell (R)$, that is the  ideals of $R$ that are traces of projective right modules or of projective left $R$-modules. By \ref{modjR} (ii), $\beta \colon \mathcal{T}_{r\ell} (R) \to  \mathcal{P}(\{1,\dots ,k\})$ defined by $\beta  (I)=\alpha (I+J(R)/J(R))$ is also an injective map. So $R$ can have at most $2^k$ different ideals that are traces of right or of left projective $R$-modules.

(3) Assume now that $R/J(R)\cong D_1\times D_2$ with $D_1$ and $D_2$ division rings. Then the image of $\beta _r$ is a family of subsets of $\mathcal{P}(\{1,2\})$, and there are essentially three different possibilities for the image of $\beta_r$. Namely
\begin{enumerate}
    \item[(i)] $\{\{1,2\}, \emptyset\}$: that is, the only trace ideals of projective modules are the trivial ones, which  implies that all  projective right $R$-modules are free. 
    
    This will be the case, for example, if $R$ is a commutative principal ideal domain with exactly two maximal ideals. Of course for commutative  rings, $\beta_r=\beta_\ell$. 
    \item[(ii)] $\mathcal{P}(\{1,2\})$. This happens, for example, if $R$ is a product of two local rings. For such products, of course,   the image of $\beta_r$ and of   $\beta_\ell$ coincide.
    
    \item[(iii)] $\{\{1,2\}, \{1\}, \emptyset\}$ or $\{\{1,2\}, \{2\}, \emptyset\}$. The first examples of this situation were given by Gerasimov and Sakhaev \cite{GerSak}. Though the tools to really prove that these were the only traces of projective right $R$-modules were developed much later and the real computation was completed in the paper \cite{DPP}.
    
    It was discovered by Puninski that  this is also the case of   endomorphism rings of suitable uniserial modules, see \cite{DP} or \cite{DPP}.  We  refer to this example as the Dubrovin-Puninski example because the explicit examples for the needed situation are due to Dubrovin: the so called nearly simple chain domains, see Example~\ref{nearlysimple}.

    Notice that, in the situation above, a projective right $R$-module $P$ with trace ideal $I$ such that $\beta _r(I)=\{1\}$ cannot be finitely generated. For $i=1,2$, let $V_i$ be a simple right $R$-module such that $\mathrm{End}_R(V_i)\cong D_i$. If $P$ is finitely generated, then $P/PJ(R)\cong V_1^n$ for suitable $n$. Then $V_1^n\oplus V_2^n\cong \left(R/J(R)\right) ^n$. But then, by Proposition~\ref{tracefg}, also $\{2\}$ is in the image of $\beta_r$ which is not the case. 

    In the examples given by Gerasimov and Sakhaev and also in the ones given by Puninski, the module $P$ is finitely generated modulo its Jacobson radical.  Moreover, the arguments in \cite{DP} and in \cite{DPP}, also show that the image of $\beta _r$ is $\{\{1,2\}, \{1\}, \emptyset\}$ and the image of $\beta _\ell$ is $\{\{1,2\}, \{2\}, \emptyset\}$. In this paper we are going to explain this situation in detail.
\end{enumerate}

(4)  If $R$ is two-sided noetherian then  $\mathcal{T} _r (R)= \mathcal{T} _\ell (R)$ by Proposition~\ref{chartraces}. In \cite[Example~3.5]{TAMS} we constructed examples of two-sided noetherian semilocal rings such that all finitely generated projective modules are free,  $R/J(R)$ is the product of two division rings, and  the image of $\beta_r$ (that should coincide with the image of $\beta_\ell$) covers all  three possibilities. That is, it is just  $\{\{1,2\}, \emptyset\}$, or $\{\{1,2\}, \{1\}, \emptyset\}$ (or $\{\{1,2\}, \{2\}, \emptyset\}$) or $\mathcal{P}(\{1,2\})$. 

Outside the noetherian case,  we do not know which  are all possible asymmetries that can appear between the images of $\beta _r$ and of $\beta _\ell$  in the rings of $(3)$.  Besides the remarks we already made in (iii),  in \cite[Example~3.6 (iv)]{TAMS} we also gave an example of a ring $R$ such that $R/J(R)$ is the product of two division rings and all projective right $R$-modules are free while this is not true on the left. The image of $\beta _r$ is $\{\{1,2\}, \emptyset\}$, while the image of $\beta _\ell$ for that example is $\{\{1,2\}, \{1\}, \emptyset\}$. Let us stress that, this   example is a suitable pullback of rings in which one of them is  either the Dubrovin-Puninski example or the Gerasimov-Sakhaev one.

We do not know whether there could be examples such that the image of $\beta _r$ has all the four elements in $\mathcal{P}(\{1,2\})$ while the image of $\beta _\ell$ has only two elements, and/or only three elements.
\end{Exs}

\subsection{Trace ideals over the endomorphism ring of a uniserial module.}

Let $R$ be a ring and let $U$ be a uniserial right $R$-module. Set $S=\mathrm{End}_R (U)$. After the work of Facchini \cite{facchiniwks} (alternatively, see \cite{libro}), we know that either $S$ is a local ring  or $S$ is a semilocal ring such that $S/J(S)$ is the product of two division rings. In the second case, this means that  $S$ has exactly two maximal right ideals that are two-sided and are also the two maximal left ideals. These two ideals can be described in a very precise way, they are 
\[I=\{f\in S\mid f\mbox{ is not a monomorphism}\}\]
and 
\[K=\{f\in S\mid f\mbox{ is not an epimorphism}\}.\]

\begin{Remark}\label{monoepi}
    Let $U$ and $V$ be uniserial modules. If   $f\colon U\to V$ is an injective morphism and   $g\colon U\to V$ is an onto morphism, then at least one of the maps $f$, $g$ or $f+g$ is an isomorphism (cf. \cite[Lemma~9.2(a)]{libro}).\end{Remark}

One has the following weak version of Nakayama's Lemma for these ideals of $S$.

\begin{Prop} \label{weaknakayama} Let $R$ be a ring. Let $U$ be a uniserial right $R$-module, and let $S=\mathrm{End}_R (U)$. We use the notation introduced above.
\begin{itemize}
\item[(i)] Let $J$ be a finitely generated left ideal of $S$ such that $IJ=J$ then $J=0$.
\item[(ii)] Let $L$ be a finitely generated right ideal of  $S$ such that $LK=L$ then $L=0$.
\end{itemize}
\end{Prop}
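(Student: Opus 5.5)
The idea is to argue directly with kernels (for (i)) and images (for (ii)) of the generators. Since $U$ is uniserial, the kernels and images of endomorphisms are totally ordered by inclusion, so among finitely many generators there is an extremal one. Multiplying by an element of $I$ (resp.\ $K$) strictly enlarges kernels (resp.\ strictly shrinks images) of nonzero maps, and this contradicts $IJ=J$ (resp.\ $LK=L$).

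For (i), write $J=Sx_1+\dots+Sx_n$ with $x_j\in S$, and suppose $J\neq 0$. Discard the zero generators, so all $x_j\neq 0$. Since the submodules $\ker x_j$ of $U$ form a chain, choose $k$ with $\ker x_k$ minimal among them. From $IJ=J$ we get $x_k=\sum_j a_j x_j$ with all $a_j\in I$.

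The key observation is as follows. Take $a\in I$ and $0\neq x\in S$. Then $\ker a\neq 0$ and $x(U)\neq 0$ are comparable nonzero submodules of the uniserial module $U$, so $x(U)\cap\ker a\neq 0$. Hence $\ker(ax)=x^{-1}(\ker a)\supsetneq \ker x$.

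Applying this to each term gives $\ker(a_jx_j)\supsetneq\ker x_j\supseteq\ker x_k$ for every $j$. These finitely many kernels again form a chain, and the smallest of them is contained in $\ker\bigl(\sum_j a_jx_j\bigr)$. Therefore $\ker x_k\supsetneq\ker x_k$, a contradiction. So $J=0$.

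For (ii), proceed dually. Write $L=x_1S+\dots+x_nS$ with all $x_j\neq 0$, and choose $k$ with $x_k(U)$ maximal among the images. From $LK=L$ we get $x_k=\sum_j x_jb_j$ with all $b_j\in K$, so each $b_j(U)\subsetneq U$.

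The key observation here is as follows. Take $b\in K$ and $0\neq x\in S$, and suppose $x(b(U))=x(U)$. Then $U=b(U)+\ker x$. Since $U$ is uniserial, a sum of two submodules equals the larger of them, so either $U=b(U)$ or $U=\ker x$, and both are excluded. Hence $x_jb_j(U)\subsetneq x_j(U)\subseteq x_k(U)$ for every $j$.

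The image of $\sum_j x_jb_j$ lies in the sum of these images, which is the largest of them, since they form a chain. Therefore $x_k(U)\subsetneq x_k(U)$, a contradiction. So $L=0$.

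The only delicate point is the strictness step in each part, which is where uniseriality is used twice: once in the strictness observation and once in taking the extremal element of a finite chain. The rest is routine bookkeeping.
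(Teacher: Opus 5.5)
Your proposal is correct and follows essentially the same route as the paper: take the generator with minimal kernel (resp.\ maximal image), write it as a combination with coefficients in $I$ (resp.\ $K$), and use uniseriality to get the contradiction $\mathrm{Im}\, f_{i_0}\cap\ker g_{i_0}=0$ (resp.\ $U=\mathrm{Im}\, g_{i_0}+\ker f_{i_0}$). The only difference is presentational: you isolate the strict enlargement of kernels (resp.\ strict shrinking of images) as a separate observation and conclude with a strict-inclusion chain, whereas the paper first derives the equalities of kernels (resp.\ images) and then contradicts them.
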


\begin{Proof} $(i)$ The key fact we are going to use in the proof is that, as $U$ is uniserial, the set of kernels of a finite number of non-zero elements in $I$ is a (totally ordered) finite subset of the submodules of $U$ so we can choose the minimal one.

Let   $J$ be a  finitely generated left ideal of $S$ such that $IJ=J$. Assume $J\neq 0$.
Let $f_1,\dots ,f_n$ be a set of generators of $J\le I$, all of them different from zero. We may assume that $\mathrm{Ker} (f_1)\subseteq \mathrm{Ker} (f_i)$ for any $i\in \{1,\dots ,n\}.$ By hypothesis, there exist $g_1,\dots ,g_n\in I$ such that $f_1=\sum _{i=1}^ng_if_i$. Choose $i_0$ such that $\mathrm{Ker}\, g_{i_0}f_{i_0}$ is minimal between the kernels of non-zero elements of the form $g_if_i$. Then 
$$\mathrm{Ker} (f_1)=\mathrm{Ker}\, g_{i_0}f_{i_0}=\mathrm{Ker}\, f_{i_0}$$
which implies that $\mathrm{Im}\, f_{i_0}\cap \mathrm{Ker}\, g_{i_0}=\{0\}$ which is not possible because both submodules are non-zero and $U$ is uniserial.  Therefore $J=0$.

 $(ii)$ In this proof we are going to use dual arguments to $(i)$. Now the key fact is that, as $U$ is uniserial, the set of images of a finite number of non-zero elements in $K$ is a (totally ordered) finite subset of the proper submodules of $U$ so we can choose a maximal one.

 Let $L $ be a nonzero finitely generated right ideal of $S$ such that $LK=L$. Let $f_1, \dots ,f_n$ be a finite set of non-zero generators of $L\le K$ and assume that $\mathrm{Im} \, f_1 \supseteq \mathrm{Im} \, f_i$ for $i\in \{1,\dots ,n\}$. Let $g_1,\dots ,g_n\in K$ be such that $f_1=\sum _{i=1}^nf_ig_i$. Let $i_0\in \{1,\dots ,n\}$ be such that $\mathrm{Im} \, f_{i_0}g_{i_0}$ is maximal. Then
$$\mathrm{Im} \, f_1= \mathrm{Im} \, f_{i_0}g_{i_0}= \mathrm{Im} \, f_{i_0}.$$
 The last equality implies that $U= \mathrm{Im} \, g_{i_0}+ \mathrm{Ker} \, f_{i_0}$. Since $\mathrm{Im} \, g_{i_0}\neq U$ and $U$ is uniserial, it follows that $\mathrm{Ker} \, f_{i_0} =U$. This is a contradiction with the assumption that $L$ is non-zero. \end{Proof}

 Combining Proposition~\ref{chartraces} with Proposition~\ref{weaknakayama} we obtain the following result on traces of projective modules over the endomorphism ring of a uniserial module.

\begin{Cor} \label{tracenotin} Let $R$ be a ring. Let $U$ be a uniserial right $R$-module, and let $S=\mathrm{End}_R (U)$. 
\begin{itemize}
\item[(i)] Let $J$ be the trace of a  projective right $S$-module. If $J\subseteq I$ then $J=0$.
\item[(ii)] Let $L$ be the trace of a  projective left $S$-module. If $L\subseteq K$ then $L=0$.
\end{itemize}
\end{Cor}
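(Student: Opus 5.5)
The plan is to combine the characterization of trace ideals in Proposition~\ref{chartraces} with the weak Nakayama lemma of Proposition~\ref{weaknakayama}. I treat (i) in detail; (ii) is the left–right dual.

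For (i), let $J$ be the trace of a projective right $S$-module and assume $J\subseteq I$. To show $J=0$, take any $x\in J$ and apply Proposition~\ref{chartraces} (ii) to the finite set $X=\{x\}$. This produces a finitely generated left ideal $J'\le J$ with $x\in J'$ and $JJ'=J'$. Since $J\subseteq I$, we get
\[J'=JJ'\subseteq IJ'\subseteq J'.\]
Here the last inclusion holds because $J'$ is a left ideal, so $IJ'=J'$. Proposition~\ref{weaknakayama}(i) then gives $J'=0$, hence $x=0$. As $x\in J$ was arbitrary, $J=0$.

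For (ii), let $L$ be the trace of a projective left $S$-module with $L\subseteq K$. I use the left-module version of Proposition~\ref{chartraces}, obtained by applying it to the opposite ring. It states that for any finite subset of $L$ there is a finitely generated right ideal $L'\le L$ containing that subset with $L'L=L'$. Then
\[L'=L'L\subseteq L'K\subseteq L',\]
so $L'K=L'$, and Proposition~\ref{weaknakayama}(ii) forces $L'=0$. Hence every element of $L$ is zero.

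The only point needing care is the side-switch in (ii). For left projective modules, the roles of left and right in Proposition~\ref{chartraces} are exchanged, so the ideals produced are finitely generated \emph{right} ideals that are absorbed on the right. This is exactly the hypothesis of Proposition~\ref{weaknakayama}(ii). Apart from that, the argument is a direct combination of the two cited results.
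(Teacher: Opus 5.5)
Your proof is correct and is essentially the paper's own argument. The paper obtains the corollary directly by combining Proposition~\ref{chartraces} (together with its left-module version for (ii)) with the weak Nakayama lemma, Proposition~\ref{weaknakayama}, and your handling of the side switch in (ii) matches that.
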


 \begin{Ex} \label{fg}  Let $U$ be a uniserial right $R$-module with endomorphism ring $S$. Assume that $I+K=S$ so that $S/J(S)$ is the product of two division rings. Let $D_1 =S/I$ and let $D_2=S/K$. 

 Combining Example~\ref{tracesd1d2} (3) with Corollary~\ref{tracenotin} we deduce that the only possibilities for the image of $\beta _r$ are either $\{\{1,2\}, \emptyset\}$ or $\{\{1,2\}, \{1\},\emptyset\}$ and the only possibilities for $\beta _\ell$ are either $\{\{1,2\}, \emptyset\}$ or $\{\{1,2\}, \{2\},\emptyset\}$.\end{Ex}

Next result is essentially well known, our variation is stating it in terms of traces of projective modules. In this result and throughout the paper, we will make free use of the fact that projective modules are isomorphic if and only if they are isomorphic modulo de Jacobson radical, which was proved, in this general form, by P\v r\'\i hoda in \cite{pavel}.

\begin{Prop}  Let $U$ be a uniserial right $R$-module with endomorphism ring $S$. Assume that $I+K=S$. Then,
\begin{itemize}
\item[(i)] \cite[Theorem~2.7]{Dung-Facchini2} All finitely generated right or left projective modules over $S$ are free;
    \item [(ii)]  $\mathcal{T}_r(S)=\{0,S\}$ if and only if all projective right $S$-modules are free;
    \item[(iii)] $\mathcal{T}_\ell(S)=\{0,S\}$  if and only if all projective left $S$-modules are free.
\end{itemize} 
In particular, $S$ has a right or left  projective module that is not free if and only if $\mathcal{T}_r(S)\neq \mathcal{T}_\ell(S)$.
\end{Prop}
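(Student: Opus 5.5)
Part (i) is cited from \cite[Theorem~2.7]{Dung-Facchini2}, so I take it as given. For (ii) and (iii) I would combine (i) with the description of traces modulo the radical from Examples~\ref{tracesd1d2} and with Example~\ref{fg}.

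For (ii), one direction is immediate: a nonzero free module has trace $S$, so if every projective right $S$-module is free, then $\mathcal T_r(S)=\{0,S\}$. For the converse, assume $\mathcal T_r(S)=\{0,S\}$ and let $P$ be a nonzero projective right $S$-module. Since $S$ is semilocal, Corollary~\ref{tracefg} shows that $\mathrm{Tr}(P)$ lies in $\mathcal T_r(S)$, so $\mathrm{Tr}(P)=S$. By Proposition~\ref{traceprop}(iii), $P/PJ(S)$ is then a projective $S/J(S)=D_1\times D_2$-module with trace the whole ring. By Examples~\ref{tracesd1d2}(1), this gives
\[P/PJ(S)\cong V_1^{(A_1)}\oplus V_2^{(A_2)}\]
with $A_1,A_2$ both nonempty.

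The main obstacle is to show that $P$ is free even when $|A_1|\neq|A_2|$. I would use the fact that projective modules are determined by their reduction modulo $J(S)$ \cite{pavel}, and that $P\cong S^{(B)}$ exactly when $|A_1|=|A_2|=|B|$. I would split into cases.

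\emph{Case 1: $A_1$ or $A_2$ is infinite.} Say $A_1,A_2$ with $\kappa=\max(|A_1|,|A_2|)$ infinite. I would first reduce to $P$ countably generated, using Kaplansky's theorem that projective modules are direct sums of countably generated ones. Each countably generated summand $P_i$ is again projective with trace in $\{0,S\}$. If a nonzero $P_i$ has trace $S$, the problem is that $P_i/P_iJ(S)\cong V_1^{m}\oplus V_2^{n}$ might have $m\ne n$ with $m,n\le\aleph_0$. Here I would invoke Proposition~\ref{trfg}(ii) and Proposition~\ref{trfgmodjr}. If, say, $m<n$ with $m$ finite, then $P_i\oplus P_i\oplus\cdots$ shows that $V_1^{(\N)}\oplus V_2^{(\N)}$ is realized, and $S^{(\N)}$ realizes it as well. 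Conversely, a decomposition $S^{m}\cong V_1^m\oplus V_2^m$ lets me split off pieces. The goal is to produce a projective $Q$ with $Q/QJ\cong V_1^{k}$ or $V_2^{k}$ for some $k\ge1$, which would make $\{1\}$ or $\{2\}$ lie in the image of $\beta_r$ or $\beta_\ell$. That contradicts $\mathcal T_r(S)=\{0,S\}$, either directly or through Proposition~\ref{trfgmodjr}(i), which transfers complements to the left side. This argument also requires deriving a contradiction from the left side, which needs care and is the main obstacle.

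\emph{Case 2: $A_1,A_2$ both finite.} Then $P$ is finitely generated modulo the radical. If $|A_1|=n<|A_2|=n'$, then $P\oplus X\cong S^{n'}$ modulo $J(S)$ with $X=V_1^{n'-n}$. By Proposition~\ref{trfgmodjr}(i), this $X$ lifts to a projective left module whose trace corresponds to $\{1\}$, and that contradicts Example~\ref{fg} on the left. For the symmetric situation I would use Proposition~\ref{trfg}(ii) on the right, which gives trace $\{1\}$ or $\{2\}$ and so contradicts the hypothesis. Hence $|A_1|=|A_2|$ and $P$ is free.

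Part (iii) follows by the left–right symmetric argument. For the final assertion, Example~\ref{fg} gives two constraints. $\mathcal T_r(S)$ can only add $T$ with $\beta(T)=\{1\}$, and $\mathcal T_\ell(S)$ can only add $L$ with $\beta(L)=\{2\}$. Since $\beta$ is injective on $\mathcal T_{r\ell}(S)$ by Corollary~\ref{modjR}, $T\neq L$. Therefore $\mathcal T_r(S)=\mathcal T_\ell(S)$ holds exactly when both equal $\{0,S\}$, that is, by (ii) and (iii), exactly when all right and all left projective modules are free.
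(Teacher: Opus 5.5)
There is a genuine gap in the converse of (ii), and hence in (iii). Two parts of your proposal are fine: the argument for the final assertion, and the subcase $n<n'$ of Case~2 (Proposition~\ref{trfgmodjr}(i) followed by Corollary~\ref{tracenotin}(ii)).

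The problems are elsewhere. Case~1 never produces the projective $Q$ it aims for; you state the goal and leave the key step as ``the main obstacle''. The subcase $n>n'$ of Case~2 relies on Proposition~\ref{trfg}(ii), which assumes that $P$ itself is finitely generated. In Case~2 you only know that $P$ is finitely generated modulo $J(S)$. If $P$ is finitely generated, (i) already makes it free, so the proposition adds nothing. If it is not, the lifting conclusion is false in general. The right ideal $P$ of Proposition~\ref{gfzero}(ii) has $P/PJ(S)\cong V_1$ with complement $V_2$. By Corollary~\ref{tracenotin}(i), no nonzero projective right $S$-module reduces to copies of $V_2$.

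The missing idea is a direct splitting step. This is what the paper does, and it is also how the paper proves (i).

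Take $P$ countably generated with $P/PJ(S)\cong V_1^{(A_1)}\oplus V_2^{(A_2)}$, and suppose $m=\min(|A_1|,|A_2|)$ is finite. The epimorphism $P/PJ(S)\to (S/J(S))^m\cong V_1^m\oplus V_2^m$ lifts by projectivity to a map $P\to S^m$. This map is onto by Nakayama's lemma, since $S^m$ is finitely generated, and therefore it splits. So $P\cong S^m\oplus P'$, where $P'/P'J(S)$ involves only one of $V_1,V_2$.

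Then $P'=P'I$ or $P'=P'K$, so $\mathrm{Tr}_S(P')\subseteq I$ or $\mathrm{Tr}_S(P')\subseteq K$. In particular $\mathrm{Tr}_S(P')\neq S$. The hypothesis $\mathcal T_r(S)=\{0,S\}$ alone then forces $P'=0$, so $P\cong S^m$ is free. Otherwise $|A_1|=|A_2|=\aleph_0$, and $P\cong S^{(\omega)}$ by \cite{pavel}. Kaplansky's theorem then handles arbitrary $P$.

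No passage to left modules is needed. Under the hypothesis of (ii), both ``one-coloured'' cases are excluded directly on the right; Corollary~\ref{tracenotin} only makes the $V_2$-case hold without that hypothesis.
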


\begin{Proof} If $P_S$ is a projective right $S$-module then $P/PJ(R)\cong P/PI\times P/PK \cong D_1^{(A_1)}\times D_2^{(A_2)}$, for suitable sets $A_1$ and $A_2$. As the isomorphism class of a projective module is determined modulo the Jacobson radical \cite{pavel}, the isomorphism class of $P$ is determined by the cardinality of the sets $A_1$ and $A_2$. $P$ is free if and only if $A_1$ and $A_2$ have the same cardinality.

Notice that $\mathrm{Tr}_S(P)\subseteq I$ if and only if $P=PI$ if and only if $A_1=\emptyset$. Similarly, $\mathrm{Tr}_S(P)\subseteq K$ if and only if $P=PK$ if and only if $A_2=\emptyset$. 

$(i)$ Let $P$ be a finitely generated projective right $S$-module. Consider $P/PJ(S) \cong D_1^n\times D_2^m$.

If $n\le m$ then the onto map $P/PJ(S)\to D_1^n\times D_2^n$ can be lifted to and onto map $P\to S^n$ so that $P\cong S^n\oplus Q$ with $Q/QJ(S)\cong D_2^{m-n}$. Hence, $\mathrm{Tr}_S(Q)\subseteq I$ which, by Corollary~\ref{tracenotin},  implies that $\mathrm{Tr}_S(Q) =0$ and, hence, $Q=0$. This shows that $P$ is free.

If $n>m$, then $P\cong S^m\oplus Q$ with $\mathrm{Tr}_S(Q)\subseteq K$. By Proposition~\ref{trfg}, $\mathrm{Hom}_S(Q,S)$ is a finitely generated projective left $S$-module whose trace is contained in $K$. By Corollary~\ref{tracenotin}, $\mathrm{Hom}_S(Q,S)=0$. Hence $Q=0$. So that, $P$ is free.

$(ii)$ Assume that $\mathcal{T}_r(S)=\{0,S\}$. 
 Let $P$ be an infinitely generated projective right $S$-module that is  countably generated we shall show it is free.    Consider $P/PJ(S) \cong D_1^{(A_1)}\times D_2^{(A_2)}$ and assume that   $A_1$ is finite
 and $|A_1| \leq |A_2|$. Then $P\cong S^{(A_1)}\oplus P'$ with $P'/P'J(S)\cong D_2^{(A_2')}$. By Corollary~\ref{tracenotin},  $A_2'=\emptyset$ which is not possible because $P$ is not finitely generated.

 If $A_2$ is finite and $|A_2| \leq |A_1|$, then $P\cong S^{(A_2)}\oplus P'$ with $P'/P'J(S)\cong D_1^{(A_1')}$. But then, $\mathrm{Tr}_S(P')\subseteq K$ and, by hypothesis, $P' = 0$.

The only remaining case is that $A_1,A_2$ are countably infinite, i.e., $|A_1| = |A_2| = \omega$. 
In this case $P \cong S^{(\omega)}$. 

It is clear that if all projective right $S$-modules are  free then $\mathcal{T}_r(S)=\{0,S\}$.

Statement $(iii)$ follows in a similar way. The final part of the statement is a consequence of the previous statements and Example~\ref{fg}.
\end{Proof}

\section{A general construction of projective ideals and of cyclic, countably presented, flat modules}

The following technical lemma  is an observation
based on the work by Z\"oschinger \cite[Satz~1.2]{Zoschinger}. See
also the paper by Puninski \cite[Section~3]{pun04} where all these ideas are also  explained in detail. 

\begin{Lemma} \label{productzero} Let $S$ be a ring, and  let $s\in S$. There exists a unit $u$ such that $s^2=us$ if
and only if there exists $t\in S$ such that $ts=0$ and $s+t$ is a
unit. In this situation, there exists also a unit $v\in S$ such that
$t^2=tv$.
\end{Lemma}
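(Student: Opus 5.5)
The plan is to verify both directions by direct algebraic manipulation, choosing $t$ explicitly in terms of $u$ and $s$ in one direction and $u$ in terms of $s+t$ in the other.

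\emph{From $s^2=us$ to $t$.} Assume $s^2=us$ with $u$ a unit. Set $t=u-s$. Then
\[ts=(u-s)s=us-s^2=0,\]
and $s+t=u$ is a unit, so this $t$ works.

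\emph{From $t$ to $s^2=us$.} Assume $ts=0$ and $w=s+t$ is a unit. Then
\[ws=s^2+ts=s^2,\]
so $u=w$ satisfies $s^2=us$.

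\emph{The claim about $t$.} Given $ts=0$ with $s+t=w$ a unit, compute
\[tw=t(s+t)=ts+t^2=t^2.\]
So $v=w=s+t$ is a unit with $t^2=tv$. This covers the situation reached from either direction, since in both cases we end with such a pair $s,t$.

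None of these steps is a genuine obstacle. The only point needing care is the order of multiplication in the noncommutative ring. The hypothesis $ts=0$ lets us kill the cross term $ts$ in $(s+t)s$ when multiplying by $w$ on the left of $s$, and again in $t(s+t)$ when multiplying by $w$ on the right of $t$. This asymmetry explains why the unit appears on the left for $s$ ($s^2=us$) but on the right for $t$ ($t^2=tv$). I would check that the product $st$ is never needed, since nothing is assumed about it.
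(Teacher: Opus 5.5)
Your proposal is correct and is essentially the paper's proof: take $t=u-s$ in one direction, $u=s+t$ in the other, and observe that $v=s+t$ satisfies $t(s+t)=ts+t^2=t^2$. Your remark that $st$ is never needed is accurate; the only hypothesis used is $ts=0$.
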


\begin{proof} Assume there exists a unit $u$ such that $s^2=us$. Then $t=u-s$ satisfies the required
properties. Conversely, if there exists $t\in S$ such that $ts=0$
and $s+t$ is a unit, then taking $u=s+t$ we obtain $us=s^2$.
Note that then also $tu=t^2$.
\end{proof}

In the situation of Lemma~\ref{productzero}, it follows that $Ss=SsSs$.  Therefore, by Corollary~\ref{tracefg}, $SsS$ is an idempotent ideal that is the trace of a countably generated projective right $S$-module. Similarly, $StS$ is an idempotent ideal which is the trace of a countably generated projective left $S$-module. Let $S\stackrel{s}{\to}S$ denote the map induced by the left multiplication by $s$, then  the direct limit $P$ of the countable direct system of right $S$-modules 
\[S\stackrel{s}{\to}S  \stackrel{s}{\to}S \to \cdots\]
is projective with trace  the ideal $SsS$, because the condition $s^2=us$ ensures that the (pure) exact sequence

\[0 \to S^{(\N)}\stackrel{1-\Phi}{\to} S^{(\N)} \to P\to 0\]
splits, where $\Phi$ denotes the homomorphism given by left multiplication by the matrix
\[\left(\begin{array}{ccccc}
    0& &\cdots& & \cdots\\
    s&0&\cdots & &\cdots\\
    \vdots &\ddots  & \ddots &  \\
0 & \cdots&  s&0 &\cdots \\
\vdots &  & & \ddots & \ddots\\
\end{array}\right).\]

Similarly, one can construct, a countably generated projective left $S$-module with trace ideal $StS$.

Next result gives a construction of a countably generated pure, hence projective right ideal $P$ of $S$ with trace $SsS$, and of a countably generated pure, hence projective, left ideal $Q$ with trace $StS$.

\begin{Prop}\label{rightleft} Let $S$ be a ring. Let $s$ and $u$ be elements of $ S$
such  that $u$ is invertible and $s^2=us$. For  every $m\in
\mathbb{Z}$, set   $s_m=u^{-m }(u^{-1}s)u^m$. Let $P=\sum _{m\in
\mathbb{Z}} s_mS$, and let $Q=\sum _{m\in \mathbb{Z}} S(1-s_m)$.
Then:

\noindent{\rm (i)} For any $n>m\in \Z$, $s_ns_m=s_m$ and $(1-s_n)(1-s_m)=1-s_n$.

\noindent{\rm (ii)} The right ideal $P$ and the left ideal $Q$ are
pure countably generated and, hence, projective, and contain $s$ and $t$ respectively. 

\noindent{\rm (iii)} $\mathrm{Tr}_S(P)=SP=SsS$ and $\mathrm{Tr}_S(Q)=QS=StS$.

\noindent{\rm (iv)} The right ideal $P$ is finitely generated if
and only if  the left ideal $Q$ is finitely generated, if and only
if  $su^{-2}s=u^{-1}s$. Equivalently, the finitely generated flat right module $S/P$ is projective if and only if the finitely generated flat left module $S/Q$ is projective if and only if $su^{-2}s=u^{-1}s$.
\end{Prop}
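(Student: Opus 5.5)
The plan is to reduce everything to two identities satisfied by $e:=u^{-1}s$, namely $es=s$ and $eue=s$. Both follow from $s^2=us$: indeed $es=u^{-1}s^2=s$, and $eue=u^{-1}s\,u\,u^{-1}s=u^{-1}s^2=s$. Conjugation by powers of $u$ is an automorphism of $S$ carrying $s_k$ to $s_{k+m}$, so it suffices to check identities for small indices and then transport them.

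\emph{Step (i).} First compute
\[
s_{m+1}s_m=u^{-m-1}\,e\,u\,e\,u^{m}=u^{-m-1}su^m=u^{-m}(u^{-1}s)u^m=s_m .
\]
Then argue by induction on $n-m$. If $s_{n-1}s_m=s_m$, then
\[
s_ns_m=s_ns_{n-1}s_m=s_{n-1}s_m=s_m .
\]
The second identity is a formal consequence: $(1-s_n)(1-s_m)=1-s_n-s_m+s_ns_m=1-s_n$.

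\emph{Step (ii).} By (i), $s_m\in s_nS$ and $1-s_n\in S(1-s_m)$ whenever $n>m$. Hence $P=\bigcup_m s_mS$ is an ascending union of cyclic right ideals, and $Q$ is an ascending union of the $S(1-s_m)$ as $m\to-\infty$. Both are therefore countably generated. For purity I use the criterion recalled in Example~\ref{idempotent}.
\begin{itemize}
\item If $a\in s_nS$, then $a=s_{n+1}a\in Pa$, so $P$ is a pure right ideal.
\item Symmetrically, if $a\in S(1-s_m)$, then $a=a(1-s_{m-1})\in aQ$, so $Q$ is a pure left ideal.
\end{itemize}
Consequently $S/P$ and $S/Q$ are countably presented flat modules. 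By Lazard's theorem they have projective dimension at most $1$, so $P$ and $Q$ are projective. For membership, note that $s_{-1}=ueu^{-1}=su^{-1}$. Hence $s=s_{-1}u\in P$ and $t=u-s=u(1-s_0)\in Q$.

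\emph{Step (iii).} Clearly $SP\subseteq \Tr_S(P)$, since left multiplications are homomorphisms $P\to S$. Conversely, take $f\in\Hom_S(P,S)$ and $a\in s_nS$. Then $f(a)=f(s_{n+1}a)=f(s_{n+1})a\in SP$, so $\Tr_S(P)\subseteq SP$. Each $s_m$ is a unit times a conjugate of $s$ times a unit, so $SP=\sum_m Ss_mS=SsS$. The left-hand version $\Tr_S(Q)=QS$ follows in the same way. Moreover $QS=\sum_m S(1-s_m)S=StS$, using $t=u(1-s_0)$ and conjugation again.

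\emph{Step (iv).} I expect this step to be the main obstacle. If $P$ is finitely generated, the chain stabilizes, so $P=s_nS$ for some $n$. Conjugating, all the $s_kS$ coincide, and in particular $s_1\in s_0S$. Write $s_1=s_0x$. Then $s_1s_1=s_1s_0x=s_0x=s_1$, so $s_1$, and hence every $s_k$, is idempotent. This gives $s_0s_1=s_0s_0x=s_1$. Conversely, $s_0s_1=s_1$ forces $s_0S=s_1S$, hence $P=s_0S$. Expanding,
\[
s_0s_1=u^{-1}su^{-2}su \quad\text{and}\quad s_1=u^{-2}su .
\]
So $s_0s_1=s_1$ is equivalent to $su^{-2}s=u^{-1}s$.

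For $Q$, finite generation is similarly equivalent to $1-s_0\in S(1-s_1)$, i.e.\ to $(1-s_0)(1-s_1)=1-s_0$. Expanding, this reads $s_0s_1=s_1$, which is the same condition. The hard part is this stabilization-and-idempotence argument, not the routine expansions.

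Finally, the flat module $S/P$ is projective exactly when $P$ is a direct summand of $S$. A summand of $S$ is cyclic, so this forces $P$ to be finitely generated. Conversely, if $P$ is finitely generated then $P=s_0S$ with $s_0$ idempotent, so $P$ is a summand. The same argument applies to $S/Q$.
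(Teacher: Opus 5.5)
Your proof is correct and follows the same line as the paper. The paper only cites \cite[Proposition~5.3]{FHS2} for (i), (ii) and (iv), and obtains (iii) from purity via Example~\ref{idempotent} exactly as you do ($f(a)=f(s_{n+1})a$ for $a\in s_nS$), so your write-up is a self-contained version of the same argument. The one step you compress, that $1-s_0\in S(1-s_1)$ forces $(1-s_0)(1-s_1)=1-s_0$, goes through as for $P$: from $1-s_0=y(1-s_1)$ and $(1-s_1)(1-s_0)=1-s_1$ you get $(1-s_0)^2=1-s_0$, and by conjugation $1-s_1$ is idempotent as well.
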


\begin{Proof} Almost all the statement of the proposition are essentially known, a possible reference is \cite[Proposition~5.3]{FHS2}. The statements about traces are new, but they easily follow from the construction and Example~\ref{idempotent}.\end{Proof}

Now we specialize this construction to  projective modules over the endomorphism ring of a uniserial module. It is useful to keep in mind the well known relation between direct sum decompositions of a module and projective right modules over its endomorphism ring.

\begin{Prop}[Theorem 4.7 in \cite{libro}]
\label{equivalencia}
Let $R$ be a ring. Let $M$ be a right $R$-module. Let $S=\End_R(M)$. Then the functor $\Hom_R(M,-)$ induces a category equivalence between $\add (M)$ and the category of finitely generated projective right $S$-modules. The inverse of this equivalence is given by the functor $-\otimes _S M$.

Assume, in addition, that $M_R$ is finitely generated. Then the functor $\Hom_R(M,-)$ induces a category equivalence between $\Add (M)$ and the category of projective right $S$-modules. 
\end{Prop}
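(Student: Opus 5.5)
The plan is to build the equivalence in stages. First, for a single module $N\in\add(M)$, say $N\oplus N'\cong M^n$, consider $\Hom_R(M,N)$. Since $\Hom_R(M,-)$ commutes with finite direct sums, we get $\Hom_R(M,N)\oplus\Hom_R(M,N')\cong\Hom_R(M,M^n)\cong S^n$. So $\Hom_R(M,N)$ is a finitely generated projective right $S$-module. Conversely, $-\otimes_S M$ sends $S$ to $M$ and commutes with finite direct sums. Hence it sends a summand $P$ of $S^n$ to a summand of $M^n$, that is, to an object of $\add(M)$.

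Next, check that the two functors are mutually inverse on these subcategories. There are natural maps
\[\eta_P\colon P\to\Hom_R(M,P\otimes_S M),\qquad \varepsilon_N\colon \Hom_R(M,N)\otimes_S M\to N,\]
given by $p\mapsto(m\mapsto p\otimes m)$ and $f\otimes m\mapsto f(m)$. For $P=S$ and $N=M$ both maps are the identity under the canonical identifications $S\otimes_S M\cong M$ and $\Hom_R(M,M)=S$. Both functors are additive, so $\eta$ and $\varepsilon$ are isomorphisms on finite direct sums $S^n$ and $M^n$. Naturality then shows they are isomorphisms on direct summands, because a summand is the image of an idempotent endomorphism and natural isomorphisms restrict to images of idempotents. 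Full faithfulness follows from these natural isomorphisms, which gives the first assertion.

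For the second assertion, assume $M$ is finitely generated. The key point is that $\Hom_R(M,-)$ then commutes with arbitrary direct sums: every map $M\to\bigoplus_{i\in A}N_i$ has finitely generated image, so it lands in a finite subsum. Hence $\Hom_R(M,M^{(A)})\cong S^{(A)}$, and for $N\oplus N'\cong M^{(A)}$ the module $\Hom_R(M,N)$ is a summand of a free module, i.e.\ projective. The tensor functor always commutes with direct sums, so it sends summands of $S^{(A)}$ into $\Add(M)$. The same argument as above, now applied to $S^{(A)}$ and $M^{(A)}$, shows that $\eta$ and $\varepsilon$ are isomorphisms, and hence on all summands.

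The only real obstacle is the commutation of $\Hom_R(M,-)$ with infinite direct sums, and the finite generation hypothesis on $M$ is used exactly there. Without it, $\varepsilon$ would fail on $M^{(A)}$.
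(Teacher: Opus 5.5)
Your argument is correct: checking that the unit and counit are isomorphisms at $S$ and $M$, extending to finite sums by additivity and to summands by retracts, and using finite generation of $M$ so that $\Hom_R(M,-)$ commutes with arbitrary direct sums, is the standard proof. The paper does not prove this statement itself; it only quotes Theorem~4.7 of Facchini's book, and the usual proof of that result runs along these lines. One correction to your closing remark: without finite generation, the map that can fail is the unit $\eta_{S^{(A)}}$, i.e.\ the canonical map $S^{(A)}\to\Hom_R(M,M^{(A)})$, which need not be onto, whereas $\varepsilon_{M^{(A)}}$ may still be an isomorphism. For example, take $M=\mathbb{Z}^{(\mathbb{N})}$ over $\mathbb{Z}$, where $M^{(\mathbb{N})}\cong M$, so naturality forces $\varepsilon_{M^{(\mathbb{N})}}$ to be an isomorphism.
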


\begin{Prop}\label{gfzero} Let $S$ be the endomorphism ring of a finitely generated uniserial right module $U$ over a ring $R$. Assume there exist $g\in I\setminus K$ and $f\in K\setminus I$ such that $gf= 0$.  Then 
\begin{itemize}
    \item[(i)] $I$ contains a countably generated projective left ideal $Q$ containing $g$, which is pure in $S$, such that $Q/J(S)Q\cong Q/KQ\cong D_2$;
    \item[(ii)] $K$ contains a countably generated projective right ideal $P$, containing $f$ which is pure in $S$ and such that $P/PJ(S)\cong P/PI\cong D_1$; 
    \item[(iii)] any projective right $S$-module is a direct sum of a free right module and  a module isomorphic to a direct sum of copies of $P$, while any projective left $S$-module is a direct sum of a free left module and 
    a module isomorphic to a direct sum of copies of $Q$.
    \item[(iv)] $\mathcal{T}_r(S)=\{0, S, SfS\}$ and $\mathcal{T}_\ell (S)=\{0, S, SgS\}$
\end{itemize}   
\end{Prop}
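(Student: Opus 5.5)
Set $s=f$, $t=g$ and aim to apply Lemma~\ref{productzero} and Proposition~\ref{rightleft}. Since $f\in K\setminus I$ and $g\in I\setminus K$, their images modulo $J(S)=I\cap K$ in $S/J(S)\cong S/I\times S/K=D_1\times D_2$ are $(\bar f,0)$ and $(0,\bar g)$ with $\bar f\neq0$, $\bar g\neq 0$. Hence $f+g$ is a unit modulo $J(S)$, and therefore a unit of $S$. With $gf=0$, Lemma~\ref{productzero} (taking $s=f$, $t=g$, $u=f+g$) gives $f^2=uf$ and $g^2=gu$.

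Proposition~\ref{rightleft} then yields a pure, countably generated, hence projective right ideal $P=\sum_m s_mS\ni f$ with $\mathrm{Tr}_S(P)=SfS$, and a pure countably generated projective left ideal $Q=\sum_m S(1-s_m)\ni g$ with $\mathrm{Tr}_S(Q)=SgS$.

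The containments $P\subseteq K$ and $Q\subseteq I$ follow because $K$ and $I$ are two-sided ideals. Each $s_m=u^{-m-1}fu^m$ lies in $K$, since $f\in K$. For $Q$, note $1-u^{-1}f=u^{-1}(u-f)=u^{-1}g$, so $1-s_m=u^{-m-1}gu^m\in I$.

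For the tops of these modules:
\begin{itemize}
\item $P\subseteq K$ gives $PK\subseteq K$, but we need $P=PK$, i.e.\ $P/PJ(S)=P/PI$. Since $\mathrm{Tr}(P)=SfS\subseteq K$, we get $P=P\,\mathrm{Tr}(P)\subseteq PK$, so $P=PK$ and $P/PJ(S)\cong P/PI\cong D_1^{(A)}$ for some set $A$.
\item To get $|A|=1$, use the chain $s_ns_m=s_m$ for $n>m$. It shows $P$ is the directed union of the cyclic right ideals $s_mS$. Modulo $I$ each $s_m$ is a unit of $D_1$, because $f\notin I$. The maps $s_mS/s_mI\to s_nS/s_nI$ are isomorphisms of one-dimensional spaces. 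The direct limit is therefore $D_1$, and $P/PI\cong D_1$.
\item The same argument on the left gives $Q/KQ\cong D_2$, using $Q=\mathrm{Tr}(Q)Q=SgS\,Q\subseteq IQ$, so $Q=IQ$.
\end{itemize}
I expect this step to be the main technical point: checking that the directed-union description commutes with $-\otimes S/I$. It does, since tensor products commute with direct limits, and each $s_mS\cong S$ is free because $s_m$ is not a left zero divisor. That last fact holds because $s_m=u^{-m-1}fu^m$ and $f$ is a monomorphism of $U$.

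For (iii) and (iv), let $X$ be projective with $X/XJ(S)\cong D_1^{(A_1)}\times D_2^{(A_2)}$, and use Pr\v{\i}hoda's theorem that projectives are determined modulo $J(S)$.
\begin{itemize}
\item If $|A_1|\ge|A_2|$, then $S^{(A_2)}\oplus P^{(B)}$ with $|B|=|A_1|$ when $A_1$ is infinite, or $|B|=|A_1|-|A_2|$ otherwise, has the same top as $X$, so $X$ is isomorphic to it.
\item If $|A_1|<|A_2|$, split off $S^{(A_1)}$ by lifting. The complement has trace in $I$, so it is zero by Corollary~\ref{tracenotin}, and this case cannot occur unless $A_2$ absorbs $A_1$.
\end{itemize}
The left side is symmetric, using $Q$ and Corollary~\ref{tracenotin}(ii). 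Then $\mathcal{T}_r(S)=\{0,S,SfS\}$ and $\mathcal{T}_\ell(S)=\{0,S,SgS\}$ follow from Proposition~\ref{traceprop}(i). The finite generation hypothesis on $U$ only enters through Proposition~\ref{equivalencia}, and it is not needed for these ring-theoretic claims.
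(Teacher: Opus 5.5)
Your route is the paper's: $u=f+g$ is a unit, Lemma~\ref{productzero} and Proposition~\ref{rightleft} produce $P$ and $Q$, and (iii)--(iv) come from P\v r\'\i hoda's theorem together with Corollary~\ref{tracenotin}. Parts (i), (ii) and (iv) are correct. Your checks that $P\subseteq K$, $Q\subseteq I$, and that the tops are $D_1$ and $D_2$ are more explicit than what the paper writes. For the tops you use the directed unions $\bigcup_m s_mS$ and $\bigcup_m S(1-s_m)$. On the left you need $1-s_m=u^{-m-1}gu^m$ to be right-regular, which holds because $g$ is epi.

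The one step that fails as written is in (iii), in the case $|A_1|<|A_2|$ with $A_1$ infinite.
\begin{itemize}
\item Splitting off $S^{(A_1)}$ ``by lifting'' means lifting the surjection $X/XJ(S)\to (S/J(S))^{(A_1)}$ to a split surjection $X\to S^{(A_1)}$. The lift exists by projectivity, but its surjectivity is Nakayama's lemma, which needs $A_1$ finite. For an infinitely generated free module $F$, $FJ(S)$ need not be superfluous in $F$.
\item Your clause ``unless $A_2$ absorbs $A_1$'' is misleading: this case never occurs.
\end{itemize}

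The fix is to apply Kaplansky's theorem first.
\begin{itemize}
\item Write $X=\bigoplus_j X_j$ with each $X_j$ countably generated, and let $X_j/X_jJ(S)\cong D_1^{(a_j)}\times D_2^{(b_j)}$.
\item Here $b_j>a_j$ forces $a_j$ to be finite, since $b_j\le\aleph_0$. So your finite lifting argument plus Corollary~\ref{tracenotin}(i) rules this case out.
\item Hence $a_j\ge b_j$, and your first case gives $X_j\cong S^{(b_j)}\oplus P^{(c_j)}$.
\item Summing over $j$ proves (iii) for arbitrary projective $X$. It also shows $|A_1|\ge|A_2|$ always holds, so your first case covers everything.
\end{itemize}
The same correction applies on the left, using $Q$ and Corollary~\ref{tracenotin}(ii).
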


\begin{Proof} By Remark~\ref{monoepi}, $f+g$ is invertible. Therefore, we are under the hypothesis of Proposition~\ref{rightleft}, and we can construct the claimed projective right ideal $P$ and the projective left ideal $Q$ from statements $(i)$ and $(ii)$, respectively. 

Statement $(iii)$ follows from Remark~\ref{fg}, while statement $(iv)$ follows  also from Proposition~\ref{rightleft} and $(iii)$.
\end{Proof}

We recall the following result, which shows that for the endomorphism ring of a uniserial right module the situation of Proposition~\ref{gfzero} is the crucial one.
    
\begin{Th} \emph{(\cite[Theorem~4.3]{pun04},\cite[Proposition~4.6]{pavel})} \label{addU}
Let $S$ be the endomorphism ring of a  uniserial right module $U$ over a ring $R$.  Then  all projective right $S$-modules are free if  and only if for any pair $f\in K\setminus I$ and $g\in I\setminus K$ we have $gf\neq 0$.
\end{Th}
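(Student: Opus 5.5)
The direction ``$gf=0$ for some pair $\Rightarrow$ a non-free projective exists'' I would do directly, with no finiteness assumption on $U$. Let $f\in K\setminus I$ and $g\in I\setminus K$ with $gf=0$. By Remark~\ref{monoepi}, $f$ is a monomorphism and $g$ an epimorphism between uniserial modules, while neither $f$ nor $g$ is an isomorphism. Hence $u=f+g$ is invertible. Setting $s=f$ and $t=g$, we have $ts=0$ with $s+t$ a unit, so Lemma~\ref{productzero} and Proposition~\ref{rightleft} apply. They give a countably generated projective right ideal $P\ni f$ with $\mathrm{Tr}_S(P)=SfS$. This trace is nonzero since $f\neq 0$, and it is contained in $K\neq S$, so it is a proper ideal. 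A free module has trace $0$ or $S$, so $P$ is not free. (This is the argument of Proposition~\ref{gfzero}, but only the ring-theoretic part is needed.)

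For the converse, assume some projective right $S$-module is not free; I want a pair $f\in K\setminus I$, $g\in I\setminus K$ with $gf=0$. If $S$ is local every projective is free, so $I+K=S$ and $S/J(S)\cong D_1\times D_2$. By the proposition preceding Section~4 and Example~\ref{fg}, there is a trace ideal $T\in\mathcal T_r(S)$ with $T\subseteq K$ and $T\not\subseteq I$. Pick $x\in T\setminus I$. Proposition~\ref{chartraces} gives a finitely generated left ideal $J=Sf_1+\dots+Sf_n\subseteq T\subseteq K$ with $x\in J$ and $TJ=J$. Since $x=\sum a_if_i$ is a monomorphism and the kernels of the $f_i$ form a chain, some $f_i$ is injective. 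So $J$ contains monomorphisms, all of which are non-surjective.

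The plan is to run the argument of Proposition~\ref{weaknakayama}(i) and see exactly where it fails when $J\not\subseteq I$; the failure should produce the required relation. Among the injective generators I would choose $f_1$ with maximal image, and write $f_1=\sum h_if_i$ with $h_i\in T\subseteq K$. First I would reduce the number of generators: every $f_j$ with $\ker f_j\neq0$ lies in $I$, and I would try to absorb these into $T f_1$ by the uniserial comparison of kernels and images. The aim is an element $s\in J\setminus I$ with $s\in Ts\subseteq Ks$, say $s=hs$ with $h\in K$. Then $(1-h)s=0$, and $1-h\notin K$ because $h\in K$ and $K$ is a maximal ideal. If $1-h\in I$, the pair $f=s$, $g=1-h$ is the one we want. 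If $1-h\notin I$, then $1-h$ is a unit and $s=0$, a contradiction. More generally, I would use Lemma~\ref{productzero} to convert a relation of the form $s^2=us$ with $u$ a unit into such a pair.

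The main obstacle is this reduction from the finitely generated left ideal $J$ with $TJ=J$ to a single element satisfying $s\in Ks$. Left factorization $f_j=af_1$ requires extending a map defined on $\mathrm{Im} f_1$ to all of $U$, and this is not automatic for uniserial modules. If the reduction fails, I would fall back on a direct-limit model instead. Take a countably generated projective $P$ with $P/PJ(S)\cong D_1^{(A)}$ and $\mathrm{Tr}\subseteq K$, present it as $\varinjlim(S\to S\to\cdots)$ by Whitehead-type idempotent column-finite matrices, and extract from the transition maps an element $s\in K\setminus I$ with $s^2=us$. This is in the spirit of \cite{pavel,pun04}.
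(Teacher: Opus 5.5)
Your first implication is correct and is the ring-theoretic half of Proposition~\ref{gfzero}, via Remark~\ref{monoepi}, Lemma~\ref{productzero} and Proposition~\ref{rightleft}. You are right that finiteness of $U$ is not needed there. The paper does not prove Theorem~\ref{addU} itself; it quotes it from \cite{pun04,pavel}, and the converse is precisely the part taken from there.

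Your treatment of the converse has a genuine gap. From $J=Sf_1+\dots+Sf_n\subseteq T\subseteq K$ with $TJ=J$ you only get a matrix relation $(1-A)\mathbf{f}=0$, with $A\in M_n(T)$ and $\mathbf{f}=(f_1,\dots,f_n)^t$. Your endgame, however, needs a single monomorphism $s$ with $s\in Ks$.
\begin{itemize}
\item The reduction to one generator needs left factorizations $f_j\in Sf_1$ through a monomorphism, i.e. extensions of $f_jf_1^{-1}\colon f_1(U)\to U$ to all of $U$. Nothing available gives this: Lemmas~\ref{pbpo} and \ref{i+kpbpo} give right factorizations $f_j\in f_1I$ through monomorphisms, and left factorizations only through epimorphisms.
\item The other injective generators $f_j$ are not dealt with at all.
\item By your own last step and Lemma~\ref{productzero}, ``some monomorphism $s\in K$ lies in $Ks$'' is equivalent to the existence of the pair $(f,g)$. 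So at that point the converse has only been restated, not proved.
\item The fallback fails as written. A direct limit of a chain $S\to S\to\cdots$ has radical factor a direct limit of copies of $S/J(S)\cong S/I\oplus S/K$. It therefore contains at most one copy of $S/I$ and cannot model $P/PJ(S)\cong (S/I)^{(A)}$ with $|A|\ge 2$.
\item Even in rank one, with varying transition maps, projectivity does not by itself give you a single $s$ with $s^2=us$.
\end{itemize}

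It helps to see exactly what is missing. Suppose you had a countably generated projective $P$ with $P/PJ(S)\cong S/I$. Then Proposition~\ref{trfgmodjr}(ii), with $n=1$ and $X=S/K$, makes $P$ a pure right ideal of $S$. Purity gives $P\cap J(S)=PJ(S)$, hence $P+J(S)=K$, so $P$ contains a monomorphism $s$. Then $s\in Ps\subseteq Ks$, and your last paragraph finishes. But from a non-free projective, the tools of the paper only give $P$ with $P/PJ(S)\cong (S/I)^{(A)}$, with no control on $|A|$. Passing from there to a scalar relation $s\in Ks$ is the uniserial-specific content of \cite{pun04,pavel}, and that argument has to be supplied before the converse is proved.
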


In the situation of Theorem~\ref{addU} we do not know whether all projective left $S$-modules should be also free. 
\begin{comment}
\begin{Remark} \label{summandV} Let  $U$ is a uniserial right $R$-module with endomorphism ring $S$. Assume that there is $f\in I\setminus K$ and $g\in K\setminus I$ we have $fg\neq 0$. Then $u=f+g$ is a unit of $S$ such that $uf=f^2$. Therefore the morphism $\mathrm{Id}-\Phi\colon U^{(\N)}\to U^{(\N)}$, where
\[\Phi =
\left(\begin{array}{ccccc}
    
     0& &\cdots& & \cdots\\
    f&0&\cdots & &\cdots\\
    \vdots &\ddots  & \ddots &  \\
    0 & \cdots&  f&0 &\cdots \\
\vdots &  & & \ddots & \ddots\\
\end{array}\right),\]
has a left inverse. If we consider the direct limit $V$ of the direct system
\[U\stackrel{f}{\to}S  \stackrel{f}{\to}U \to \cdots\]
which is a uniserial module, because the direct limit of uniserial modules is also uniserial. The module $V$ fits into the exact sequence
\[0\to U^{(\N)}\stackrel{1-\Phi}\to U^{(\N)}\to V\to 0 \]
which is split. Therefore, $V$ is a uniserial direct summand of  $U^{(\N)}$.
\end{Remark}

\end{comment}

\section{Determining the trace ideals}

In this section we will give an intrinsic description of the nontrivial trace ideals appearing in Proposition~\ref{gfzero}. We start with some technical results useful to compare the right or left ideal generated by an endomorphism of a module.

\begin{Lemma} \label{pbpo} Let $R$ be a ring, and let  $U$, $N$ and $M$ be right $R$-modules. 
\begin{enumerate}
    \item[(i)] Let $f\in \mathrm{Hom}_R(M,U)$ be injective. Then 
    \[f\mathrm{Hom}_R(N,M)=\{g\colon N\to U\mid \mathrm{Im}\, g \subseteq \mathrm{Im}\, f\}.\]
    In particular, if $g\colon N\to U$ is an injective morphism then $\mathrm{Im}\, g = \mathrm{Im}\, f$ if and only if there exists an invertible morphism $u\colon N\to M$ such that $g=fu$.

    \item[(ii)] Let $f\in \mathrm{Hom}_R(U,M)$ be surjective. Then 
    \[\mathrm{Hom}_R(M,N)f=\{g\colon U\to N\mid \mathrm{Ker}\, f \subseteq \mathrm{Ker}\, g\}.\]
    In particular, if $g\in U\to N$ is an onto morphism then $\mathrm{Ker}\, g = \mathrm{Ker}\, f$ if and only if  there exists an invertible morphism $u\colon M\to N$ such that $g=uf$.
\end{enumerate}   
\end{Lemma}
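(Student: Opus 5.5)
Both parts are direct factorization arguments: one inclusion is trivial, and the reverse is built from the universal property of the image (for (i)) or of the cokernel $U/\mathrm{Ker}\, f$ (for (ii)). I will do (i) in detail and obtain (ii) by dualizing.

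For (i), if $h\in \mathrm{Hom}_R(N,M)$ then $\mathrm{Im}\,(fh)\subseteq \mathrm{Im}\, f$, which gives the inclusion ``$\subseteq$''. Conversely, let $g\colon N\to U$ satisfy $\mathrm{Im}\, g\subseteq \mathrm{Im}\, f$. Since $f$ is injective, it induces an isomorphism $\bar f\colon M\to \mathrm{Im}\, f$. I set $h=\bar f^{-1}\circ g\colon N\to M$, which makes sense because $g$ takes values in $\mathrm{Im}\, f$. Then $fh=g$, and the equality of sets follows.

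For the ``in particular'' of (i), suppose $g$ is injective with $\mathrm{Im}\, g=\mathrm{Im}\, f$. The previous step gives $u\colon N\to M$ with $g=fu$. Applying the same argument with the roles of $f$ and $g$ exchanged (now $g$ is the injective map) gives $v\colon M\to N$ with $f=gv$. Then $f=fuv$, and injectivity of $f$ forces $uv=\mathrm{Id}_M$; likewise $g=gvu$ and injectivity of $g$ force $vu=\mathrm{Id}_N$. Hence $u$ is invertible. The converse is immediate, since $\mathrm{Im}\,(fu)=\mathrm{Im}\, f$ whenever $u$ is surjective.

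Part (ii) is the dual argument. If $g=hf$ then $\mathrm{Ker}\, f\subseteq \mathrm{Ker}\, g$. Conversely, suppose $\mathrm{Ker}\, f\subseteq\mathrm{Ker}\, g$. Because $f$ is onto, $M\cong U/\mathrm{Ker}\, f$, so $g$ factors through this quotient and there is $h\colon M\to N$ with $g=hf$; explicitly $h(f(x))=g(x)$, which is well defined by the kernel inclusion. For the invertibility claim, suppose both maps are onto with equal kernels. Then $g=uf$ and $f=vg$, so $f=vuf$, and surjectivity of $f$ yields $vu=\mathrm{Id}_M$; symmetrically, surjectivity of $g$ yields $uv=\mathrm{Id}_N$. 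The converse again holds because composing with an isomorphism does not change the kernel.

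I do not expect any real obstacle. The only point needing care is to use cancellation on the correct side: injective maps are left cancellable and surjective maps are right cancellable.
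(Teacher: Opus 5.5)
Your proposal is correct and follows the paper's route. The paper obtains the factorization in (i) from the pullback of the isomorphism $f'\colon M\to \mathrm{Im}\, f$ along the corestriction of $g$, and in (ii) from the dual pushout; this comes to exactly your map $\bar f^{-1}\circ g$ (resp.\ your well-defined $h$ with $h(f(x))=g(x)$), and the ``in particular'' parts are proved by the same cancellation argument from $g=fu$ and $f=gv$.
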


\begin{Proof} $(i)$. It suffices to show that if $g\colon N\to U$ is such that $\mathrm{Im}\, g \subseteq \mathrm{Im}\, f$ then $g\in f\mathrm{Hom}_R(M,U)$. For this purpose, let $\varepsilon \colon \mathrm{Im}\, f \to U$ be the inclusion, then $f=\varepsilon \circ f'$ and $g=\varepsilon \circ g'$ where $f'$ and $g'$ are just the maps $f$ and $g$ but with the (possibly) smaller codomain. Note that $f'$ is an isomorphism. Now consider the pullback diagram
\[\begin{tikzcd}[column sep=huge,row sep=huge]
        X\rar {f''}\dar{g''} & N\dar{g'} \\
        M\rar{f'} & \mathrm{Im}\, f
    \end{tikzcd}\]
 in which $f''$ is an isomorphism because $f'$ is. Therefore $g=f\circ g''\circ (f'')^{-1}\in  f\mathrm{Hom}_R(N,M)$.

 If $f$ and $g$ are injective maps with the same image, then the previous argument shows that there exist $u\in \mathrm{Hom}_R(N,M)$ and $v\in \mathrm{Hom}_R(M,N)$ such that $g=fu$ and $f=gv$. The injectivity of $f$ and $g$ implies that $uv=\mathrm{Id}_M$ and $vu=\mathrm{Id}_N$.

 Statement $(ii)$ follows by a dual argument using the push-out of the induced maps $f'\colon U/\mathrm{Ker}\, f \to M$ and $g'\colon U/\mathrm{Ker}\, f \to N$.
\end{Proof}

\begin{Lemma} \label{i+kpbpo} Let $R$ be a ring, and let  $U$ be a uniserial right $R$-module. Let $S=\mathrm{End}_R(U)$ and assume that $K+I=S$, equivalently, that $K$ and $I$ are different maximal ideals of $S$. Let $g\in J(S)$.
\begin{enumerate}
\item[(i)] Let $f\in K\setminus I$, then:
\begin{itemize}
    \item[(i.1)] If $\mathrm{Im}\, g\subseteq \mathrm{Im} \, f$ then $g\in fI$.
    \item[(i.2)] If $\mathrm{Im}\, g\supseteq \mathrm{Im} \, f$ then there exists $g'\in K\setminus I$ such that $\mathrm{Im}\, g'= \mathrm{Im} \, g$, so $g\in g'I$ and $f\in g'S$.
\end{itemize}
\item[(ii)] Let $f\in I\setminus K$, then:
\begin{itemize}
    \item[(ii.1)] If $\mathrm{Ker}\, f\subseteq \mathrm{Ker} \, g$ then $g\in Kf$.
    \item[(ii.2)] If $\mathrm{Ker}\, f\supseteq \mathrm{Ker} \, g$ then there exists $g'\in I\setminus K$ such that $\mathrm{Ker}\, g'= \mathrm{Ker} \, g$, so $g\in Kg'$ and $f\in Sg'$.
\end{itemize}
\end{enumerate}  
In particular, $J(S)=I\cap K=KI$.
\end{Lemma}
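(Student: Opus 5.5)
The plan is to derive everything from Lemma~\ref{pbpo}, using that $U$ is uniserial. Throughout, $g\in J(S)=I\cap K$ (the last equality is what we want to show), $g$ is neither injective nor surjective, and $f$ is as in the statement.

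\emph{(i.1).} Let $f\in K\setminus I$ with $\mathrm{Im}\, g\subseteq \mathrm{Im}\, f$. Since $f\notin I$, $f$ is injective, so Lemma~\ref{pbpo}(i) with $N=M=U$ gives $h\in S$ with $g=fh$. Because $f$ is injective, $\mathrm{Ker}\, h=\mathrm{Ker}\, g\neq 0$, so $h\in I$ and $g\in fI$.

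\emph{(i.2).} Now suppose $\mathrm{Im}\, g\supseteq \mathrm{Im}\, f$. The image $\mathrm{Im}\, g\cong U/\mathrm{Ker}\, g$ contains the copy $\mathrm{Im}\, f\cong U$ of $U$. To produce an injective $g'$ with image $\mathrm{Im}\, g$, I would use that $\mathrm{Id}=f+(\mathrm{Id}-f)$ and apply Remark~\ref{monoepi} to the pair $g$ and $g+f$.
\begin{itemize}
\item[(a)] Consider $g'=g+f$. Its image lies in $\mathrm{Im}\, g$, since $\mathrm{Im}\, f\subseteq\mathrm{Im}\, g$.
\item[(b)] $g'$ is injective. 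If $g'$ failed to be injective then $g'\in I$, hence $f=g'-g\in I$, a contradiction. So $g'\in K\setminus I$, and $g'$ is not surjective because $\mathrm{Im}\, g'\subseteq\mathrm{Im}\, g\neq U$.
\end{itemize}
The difficulty is that $g'$ need not have image equal to $\mathrm{Im}\, g$. This is the main obstacle. The remedy I would try is a factorization. Since $f$ is injective and $\mathrm{Im}\, f\subseteq \mathrm{Im}\, g$, pull back along $g'\colon U\to U/\mathrm{Ker}\, g\cong\mathrm{Im}\, g$: there is $h\colon U\to U$ with $gh=f$, since $U$ is uniserial and one can lift through the epimorphism onto $\mathrm{Im}\, g$ when the source is $U$ (this needs justification). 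Then $h\notin I$. If $h$ is not surjective, then $h\in K\setminus I$, and $u=h+k$ is a unit for suitable $k\in I\setminus K$. With that unit, set $g'=gu$, which is injective with the same image as $g$. I expect the delicate point to be making this lifting and modification precise, probably via Remark~\ref{monoepi} applied to $U\to \mathrm{Im}\, g$. Once $g'$ is in hand, (i.1) applied to $g'$ gives $g\in g'I$, and Lemma~\ref{pbpo}(i) gives $f\in g'S$.

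\emph{(ii).} Part (ii) is the exact dual, using kernels and Lemma~\ref{pbpo}(ii). If $\mathrm{Ker}\, f\subseteq\mathrm{Ker}\, g$, then $g=hf$ with $h\in S$, and $h$ is not surjective because $g$ is not, so $h\in K$. The case $\mathrm{Ker}\, f\supseteq \mathrm{Ker}\, g$ is handled by the dual construction of a surjective $g'$ with kernel $\mathrm{Ker}\, g$.

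\emph{Final claim.} Clearly $KI\subseteq I\cap K=J(S)$. For the reverse inclusion, take $g\in J(S)$, nonzero, and pick any $f\in K\setminus I$, which exists because $I\neq K$. Since $U$ is uniserial, the images of $f$ and $g$ are comparable. In the case $\mathrm{Im}\, g\subseteq\mathrm{Im}\, f$ we get $g\in fI\subseteq KI$ by (i.1). In the other case we get $g\in g'I\subseteq KI$ by (i.2). Hence $J(S)=KI$.
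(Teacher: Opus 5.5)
Your arguments for (i.1), (ii.1), and for deducing $J(S)=KI$ from (i.1)--(i.2) are fine. However, (i.2) has a genuine gap, and (ii.2), which you only describe as ``the dual construction'', inherits it. The remedy you sketch cannot work. Since $g\in I$ and $I$ is a two-sided ideal, $gh\in I$ for every $h\in S$, whereas $f\notin I$; hence no $h\in S$ with $gh=f$ exists. Likewise, for a unit $u$ we have $\mathrm{Ker}(gu)=u^{-1}(\mathrm{Ker}\, g)\neq 0$, so $gu$ is never injective. In fact nothing in $SgS\subseteq I$ can serve as $g'$. The missing ingredient is that $\mathrm{Im}\, g$ is isomorphic to $U$, and you get this by working with homomorphisms from $U$ into $\mathrm{Im}\, g$ rather than inside $S$.

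Concretely, apply Remark~\ref{monoepi} to the uniserial modules $U$ and $\mathrm{Im}\, g$. The corestriction $\bar g\colon U\to\mathrm{Im}\, g$ is onto, and the corestriction $\bar f\colon U\to \mathrm{Im}\, g$ is injective; this is where $\mathrm{Im}\, f\subseteq\mathrm{Im}\, g$ enters. Since $\bar g$ is not injective, either $\bar f$ or $\bar f+\bar g$ is an isomorphism. Thus $g'=f$ (exactly when $\mathrm{Im}\, f=\mathrm{Im}\, g$) or $g'=f+g$ (whenever $\mathrm{Im}\, f\neq\mathrm{Im}\, g$) is a monomorphism with $\mathrm{Im}\, g'=\mathrm{Im}\, g\neq U$. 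So your first candidate $f+g$ was essentially right; what you lacked was this step giving equality of images. For (ii.2), apply the Remark dually to the injection $\widetilde g\colon U/\mathrm{Ker}\, g\to U$ and the surjection $U/\mathrm{Ker}\, g\to U$ induced by $f$.

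The paper does essentially this, packaged slightly differently. It factors $g=\widetilde g\pi$ and uses Lemma~\ref{pbpo}(i) to write $f=\widetilde g h$ with $h\colon U\to U/\mathrm{Ker}\, g$ injective. It then applies Remark~\ref{monoepi} to $\pi$ and $h$ to get an isomorphism $\alpha\colon U/\mathrm{Ker}\, g\to U$, and takes $g'=\widetilde g\alpha^{-1}$. From there, $g\in g'I$ and $f\in g'S$ follow exactly as you say.
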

\begin{Proof} $(i.1)$. By Lemma~\ref{pbpo} (i), there exists $h\in S$ such that $g=f\circ h$. Since $g$ is not injective and $f$ is injective, $h$ is not injective which is to say that $h\in I$.

$(i.2)$. Consider the canonical factorization of $g\colon U\to U$ given by the Isomorphism Theorem,
\[\begin{tikzcd}%[row sep=huge,column sep=huge]
        U \rar{g}  \drar{\pi} & U  \\
        & U/\mathrm{Ker}\, g \uar{\widetilde {g}}
    \end{tikzcd}\]
By Lemma~\ref{pbpo}(i), there exists $h\colon U\to U/\mathrm{Ker}\, g$ such that $f= \widetilde {g}\circ h $. Since $f$ is injective so is $h$. So we have an onto morphism $\pi \colon U \to U/\mathrm{Ker}\, g$ and a monomorphism $h\colon U \to U/\mathrm{Ker}\, g$, which implies that there is an isomorphism $\alpha \colon U/\mathrm{Ker}\, g \to U$. Now $f= (\widetilde {g}\circ \alpha ^{-1})\circ (\alpha \circ h)=g'\circ h'\in g'S$. Since $ \widetilde {g}$ is injective, so is $g'= \widetilde {g}\circ \alpha ^{-1}$ and $\mathrm{Im}\, \widetilde {g}= \mathrm{Im}\, g' =\mathrm{Im} \, g \neq U. $ By $(i.1)$, $g\in g'I$.

$(ii.1)$. By Lemma~\ref{pbpo} (ii), there exists $h\in S$ such that $g=h\circ f$. Since $g$ is not onto and $f$ is onto, $h$ is not onto which is to say that $h\in K$.

$(ii.2)$. Consider the canonical factorization of $g\colon U\to U$ given by the Isomorphism Theorem,
\[\begin{tikzcd}%[row sep=huge,column sep=huge]
        U \rar{g}  \drar{\pi} & U  \\
        & U/\mathrm{Ker}\, g \uar{\widetilde {g}}
    \end{tikzcd}\]
By Lemma~\ref{pbpo}(ii), there exists $h\colon  U/\mathrm{Ker}\, g \to U$ such that $f= h\circ \pi $. Since $f$ is onto so is $h$. So we have an injective morphism $\widetilde{g} \colon U/\mathrm{Ker}\, g \to U$ and an  onto morphism $h\colon  U/\mathrm{Ker}\, g \to U$, which implies that there is an isomorphism $\beta \colon U/\mathrm{Ker}\, g \to U$. Now $f= (h\circ \beta ^{-1})\circ (\beta \circ \pi)=h'\circ \pi '\in S\pi '$. Since $ \pi$ is onto, so is $\pi '= \beta \circ \pi$ and $\mathrm{Ker}\, \pi = \mathrm{Ker }\, \pi' =\mathrm{Ker} \, g \neq \{0\}. $ By $(ii.1)$, $g\in K \pi '$. This concludes the proof of the statement.
\end{Proof}

Now we are ready to give an intrinsic description of the non-trivial trace ideals appearing in  Proposition~\ref{gfzero}. We use the following definitions first introduced in  \cite{pavelqs} and in \cite{addU}.

\begin{Def} Let $U$ be a uniserial right module over a ring $R$. Let $U_e = \sum_{g \in S \setminus K} {\rm Ker}\, g$
be the union of kernels of epimorphisms of $S$. Dually, let $U_m=\bigcap _{f\in S\setminus I} \mathrm{Im}\, f$.
\end{Def} 

By Lemma~2.2 and Lemma~2.3 in \cite{pavelqs}, if $S=I+K$ then $U_e$ and $U_m$ are invariant  submodules of $U$ such that $U_e\neq \{0\}$ and  $U_m\neq U$. If there exist $g\in I\setminus K$ and $f\in K\setminus I$ such that $gf= 0$ then $U_m\subseteq U_e$.

\begin{Lemma} \label{auxTL} Let $U$ be a  uniserial right module over a ring $R$. Let $S$ be the endomorphism ring of $U$. Assume that $I+K=S$. Then:
\begin{itemize}
    \item[(i)] $T=\{f\in K\mid f(U)\subseteq U_e\}$  is a two-sided ideal of $S$.
    \item[(ii)] $L=\{g\in I\mid  U_m \subseteq \mathrm{Ker}\, g\}$  is a two-sided ideal of $S$.
    \item[(iii)] For any monomorphism $f'\in T$ there is  an epimorphism   $g'\in L$ such that $g'f'=0$.
    \item[(iv)] For any epimorphism $g'\in L$ there is a monomorphism $f'\in T $ such that $g'f'=0$.
    \item[(v)] There exist  $g\in I\setminus K$ and $f\in K\setminus I$ such that $gf= 0$ if and only if there is a monomorphism in $T$ if and only if there is an epimorphism in $L$.  
\end{itemize}
\end{Lemma}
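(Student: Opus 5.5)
The plan is to check (i) and (ii) directly from the invariance of $U_e$ and $U_m$, then obtain (iii) and (iv) from the definitions of $U_e$ and $U_m$ as a union of kernels and an intersection of images, and finally read off (v). Throughout I use the fact quoted after the Definition (Lemmas 2.2 and 2.3 of \cite{pavelqs}): when $S=I+K$, $U_e$ and $U_m$ are invariant submodules of $U$, $U_e\neq\{0\}$ and $U_m\neq U$. I also use that the kernels, respectively images, of endomorphisms of the uniserial module $U$ form chains.

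For (i), take $f,f_1\in T$ and $h\in S$. Then $(f+f_1)(U)\subseteq U_e$ and $f+f_1\in K$, since $K$ is an ideal. Next, $hf(U)\subseteq h(U_e)\subseteq U_e$ by invariance, and $fh(U)\subseteq f(U)\subseteq U_e$; both products lie in $K$. So $T$ is a two-sided ideal.

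Statement (ii) is the dual computation. $U_m\subseteq \mathrm{Ker}\,g$ clearly gives $U_m\subseteq \mathrm{Ker}\,hg$. For the other product, invariance gives $h(U_m)\subseteq U_m\subseteq \mathrm{Ker}\,g$, so $U_m\subseteq \mathrm{Ker}\,gh$.

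For (iii), let $f'\in T$ be a monomorphism. Then $f'(U)\subseteq U_e=\bigcup_{g\in S\setminus K}\mathrm{Ker}\,g$, and this union is over a chain. If $f'(U)\subseteq \mathrm{Ker}\,g'$ for some epimorphism $g'$, then $g'f'=0$. We also get $g'\in I$, since $f'\neq 0$ forces $\mathrm{Ker}\,g'\neq 0$. Moreover $U_m\subseteq \mathrm{Im}\,f'\subseteq\mathrm{Ker}\,g'$, so $g'\in L$.

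The remaining case, where $f'(U)$ is not inside any single kernel, is the step I expect to be the main obstacle. In that case $f'(U)$ contains every such kernel, hence $f'(U)=U_e$. I would first try replacing $f'$ by $f'^2$. This is still a monomorphism, and $\mathrm{Im}\,f'^2=f'(U_e)$. If $f'(U_e)\subsetneq U_e$, then $\mathrm{Im}\,f'^2$ sits inside some $\mathrm{Ker}\,g$ with $g$ an epimorphism, and $gf'^2=0$. I then need to convert this into an epimorphism killing $f'$ itself. I would do this with Lemma~\ref{pbpo}(ii) and Lemma~\ref{i+kpbpo}(ii.2), producing an epimorphism $g'$ with $\mathrm{Ker}\,g'=f'(\mathrm{Ker}\,gf')$ or a similar construction. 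Showing that $f'(U_e)\ne U_e$ also needs an argument that uses $U_e\neq U$.

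Statement (iv) is the exact dual, with intersections of images in place of unions of kernels and Lemma~\ref{i+kpbpo}(i) in place of (ii).

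For (v), suppose $g\in I\setminus K$ and $f\in K\setminus I$ satisfy $gf=0$. Then $f$ is a monomorphism and $\mathrm{Im}\,f\subseteq\mathrm{Ker}\,g\subseteq U_e$, so $f\in T$. Also $g$ is an epimorphism and $U_m\subseteq\mathrm{Im}\,f\subseteq\mathrm{Ker}\,g$, so $g\in L$. Conversely, a monomorphism $f'\in T$ lies in $T\subseteq K$, so $f'\in K\setminus I$. Part (iii) then provides an epimorphism $g'\in L\subseteq I$, so $g'\in I\setminus K$, with $g'f'=0$. The implication starting from an epimorphism in $L$ follows in the same way from (iv).
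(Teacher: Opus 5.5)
\textbf{There is a genuine gap in (iii), and in (iv).} Parts (i), (ii), (v) are fine, and so is the case of (iii) where $f'(U)$ lies in the kernel of some epimorphism. The gap is the remaining case $f'(U)=U_e$, together with its dual in (iv), $\mathrm{Ker}\,g'=U_m$, which you cover only by ``exact dual''. The conversion you sketch does not close it:
\begin{itemize}
\item In this case $\mathrm{Ker}\,g\subseteq U_e=f'(U)$, so $f'(\mathrm{Ker}\,gf')=f'(f'^{-1}(\mathrm{Ker}\,g))=\mathrm{Ker}\,g$, which does not contain $f'(U)$.
\item Lemma~\ref{i+kpbpo}(ii.2), applied to $gf'$, requires an epimorphism whose kernel already contains $\mathrm{Ker}\,gf'\supseteq f'(U)$. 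That is exactly what you are trying to find.
\end{itemize}
The paper closes this case with a fact you did not take from \cite{pavelqs}; you used those lemmas only for invariance and nontriviality. If $f'(U)=U_e$, then $f'$ gives $U\cong U_e$, and \cite[Lemma~2.3]{pavelqs} forces $U_e=U$. So $f'$ would be onto, contradicting $f'\in K$. Dually, $\mathrm{Ker}\,g'=U_m$ gives $U/U_m\cong U$, and \cite[Lemma~2.2]{pavelqs} forces $U_m=0$, contradicting $g'\in I$.

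\textbf{How your route can be completed.} Your $f'^2$ idea works without these citations once you add one observation: every epimorphism $g$ satisfies $g(U_e)=U_e$. To see this, let $y=g(x)\in\mathrm{Ker}\,h$ with $h$ an epimorphism. Then $x\in\mathrm{Ker}\,hg\subseteq U_e$, since $hg$ is an epimorphism; the other inclusion is invariance. Also, $f'(U_e)\neq U_e$ needs no extra argument: $f'(f'(U))=f'(U)$ would give $f'(U)=U$ by injectivity. Now take $g$ with $gf'^2=0$. Then $\mathrm{Im}\,gf'=g(U_e)=U_e=\mathrm{Im}\,f'$, so Lemma~\ref{pbpo}(i) gives $gf'=f'h$ with $h(U)=U$. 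From $f'hf'=gf'^2=0$ and injectivity of $f'$ you get $hf'=0$, and $h\in L$ exactly as in your easy case. For (iv) the dual ingredient is that every monomorphism $f$ satisfies $f(U_m)=U_m$. Injectivity gives $f(U_m)=\bigcap_{k\in S\setminus I}\mathrm{Im}\,fk\supseteq U_m$, and invariance gives the reverse inclusion. After that, $g'^2$ and Lemma~\ref{pbpo}(ii) produce the required monomorphism. With these additions your argument becomes a self-contained alternative to the paper's appeal to \cite{pavelqs}.
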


\begin{Proof} Since $U_e$ and $U_m$ are invariant submodules, it follows that $T$ and $L$ are   two-sided ideals of $S$.  This shows $(i)$ and $(ii)$.

$(iii)$. Assume that   $f'$ is a monomorphism in $T$. So that, $f'(U)\subseteq U_e=\sum_{g \in S \setminus K} \mathrm{Ker}\, g$. We claim that there exists $g'\in S\setminus K$ such that $f'(U)\subseteq \mathrm{Ker}\, g'$. Therefore, $g'\circ f'=0$ and $g'$ is an epimorphism in $L$.

Suppose on the contrary that no such $g'$ exists. As $U$ is uniserial, this means that, for any $g\in S \setminus K$, $\mathrm{Ker}\, g\subseteq f'(U)$. Therefore, $f'(U)= U_e=\sum_{g \in S \setminus K} \mathrm{Ker}\, g$ and, hence, $f'\colon U\to U_e$ is an isomorphism. By \cite[Lemma~2.3]{pavelqs}, this implies that $U=U_e$, so that $f'$ is onto, which contradicts the hypothesis on $f'$.

$(iv)$. Let $g'$ be an epimorphism in $L$. Therefore $U_m= \bigcap _{f\in S\setminus I} \mathrm{Im}\, f\subseteq \mathrm{Ker}\, g'$. We shall prove that there exists $f'\in S\setminus I$ such that $f'(U)\subseteq  \mathrm{Ker}\, g'$, because then $g'\circ f' =0$ and $f'$ is the claimed monomorphism. 

Suppose on the contrary that such $f'$ does not exist. Since $U$ is uniserial, this implies that $\mathrm{Ker}\, g' \subseteq f(U)$ for any $f\in S\setminus I$. Since $g'\in T$ we have $U_m=\mathrm{Ker}\, g'$, and then, by the isomorphism theorem, $g'$ induces an isomorphism $U/U_m\to U$. By \cite[Lemma~2.2]{pavelqs}, $U_m=0$ and then $g'$ is injective, which is not possible. This finishes the proof of the claim.

$(iv)$. If there exist $g\in I\setminus K$ and $f\in K\setminus I$ such that $gf= 0$ then $f$ is a monomorphism with image contained in $U_e$ therefore, $f\in T$.  Moreover, $g$ is an onto map such that $U_m\subseteq \mathrm{Ker}\, g$ so $g\in L$. \end{Proof}

\begin{Th} \label{TL} Let $U$ be a  uniserial right module over a ring $R$. Let $S$ be the endomorphism ring  of $R$, assume that there exist $g\in I\setminus K$ and $f\in K\setminus I$ such that $gf= 0$. Then:
\begin{enumerate}
    \item $T=SfS=Sf'S$ for any monomorphism $f'\in T$, and $T$ is the trace of a countably generated projective and pure right ideal of $S$; 
    \item $L=SgS=Sg'S$ for any  $g'\in L$ epimorphism,  and $L$ is the trace of a countably generated projective and pure left ideal of $S$;
    \item $T$ is pure as a right ideal.
    \item $L$ is pure as a left ideal.
\end{enumerate}   
\end{Th}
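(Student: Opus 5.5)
Note first that $S$ is the endomorphism ring of $U$ (the statement has a typo), and that by Remark~\ref{monoepi} the element $u=f+g$ is a unit with $f^2=uf$. So Proposition~\ref{rightleft} applies with $s=f$ and $t=g$. It yields a pure, countably generated, hence projective, right ideal $P$ with $\mathrm{Tr}_S(P)=SfS$, and a pure projective left ideal $Q$ with $\mathrm{Tr}_S(Q)=SgS$. The theorem therefore reduces to the equalities $T=SfS=Sf'S$ and $L=SgS=Sg'S$, together with the purity statements (3) and (4). I would argue only the right-hand statements (1) and (3); the left ones are obtained by dualizing, exchanging images with kernels, $U_e$ with $U_m$, and Lemma~\ref{i+kpbpo}(i) with (ii).

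The inclusion $SfS\subseteq T$ holds because $f\in T$ (Lemma~\ref{auxTL}(v)) and $T$ is an ideal. For the reverse inclusion, take $h\in T$.
\begin{itemize}
\item If $h\notin I$, then $h$ is a monomorphism in $T$. Lemma~\ref{auxTL}(iii) gives an epimorphism $g'$ with $g'h=0$, so Proposition~\ref{rightleft} applies to the pair $(h,g')$.
\item Otherwise $h\in I\cap K=J(S)$. Since $\mathrm{Im}\,h\subseteq U_e$ and $U_e$ is a sum of kernels of epimorphisms, uniseriality gives an epimorphism $e$ with $\mathrm{Im}\,h\subseteq \mathrm{Ker}\,e$.
\end{itemize}

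The key is to compare $\mathrm{Im}\,h$ with $\mathrm{Im}\,f^n$ using Lemma~\ref{i+kpbpo}(i).
\begin{itemize}
\item If $\mathrm{Im}\,h\subseteq \mathrm{Im}\,f$, then $h\in fS\subseteq SfS$.
\item If $\mathrm{Im}\,h\supseteq \mathrm{Im}\,f$, then (i.2) provides a monomorphism $h'$ with $\mathrm{Im}\,h'=\mathrm{Im}\,h\subseteq U_e$ and $h\in h'S$. So $h'$ is a monomorphism in $T$, and it suffices to show $h'\in SfS$.
\end{itemize}
Thus everything reduces to the claim $Sf'S=SfS$ for any two monomorphisms $f,f'\in T$. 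By symmetry it is enough to show $f'\in SfS$.

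To prove $f'\in SfS$, let $g'$ be an epimorphism with $g'f'=0$. Then $\mathrm{Im}\,f'\subseteq\mathrm{Ker}\,g'$ and $\mathrm{Im}\,f\subseteq \mathrm{Ker}\,g$. If $\mathrm{Im}\,f'\subseteq \mathrm{Im}\,f$, then $f'\in fS$ and we are done. Otherwise $\mathrm{Im}\,f\subsetneq \mathrm{Im}\,f'$. In that case I would use $\mathrm{Ker}\,g'$: comparing it with $\mathrm{Ker}\,g$ via Lemma~\ref{i+kpbpo}(ii) gives either $g'\in Sg$ or $g\in Sg'$. The aim is to write $f'=\alpha f\beta$ with $\alpha,\beta\in S$, built from an epimorphism $\pi$ having $\mathrm{Ker}\,\pi\supseteq \mathrm{Im}\,f$ and monomorphisms; the element $1-g f^{\ast}$-type idempotent behaviour of $s_m$ from Proposition~\ref{rightleft}(i) should supply the needed relation. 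A cleaner route may be available: $T$ and $SfS$ are both contained in $K$ and $SfS\in\mathcal T_r(S)$. By Proposition~\ref{gfzero}(iv), Corollary~\ref{modjR} and Example~\ref{fg}, any trace ideal not contained in $I$ and contained in $K$ equals $SfS$. So it suffices to show that $Sf'S$ is a trace ideal, which it is by Proposition~\ref{rightleft} applied to $(f',g')$. Hence $Sf'S=SfS$ for every monomorphism $f'\in T$, and this route also avoids the messy comparison.

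Purity of $T$ as a right ideal means that for each $h\in T$ one has $h\in Th$. From the above, $h\in h'S$ with $h'$ a monomorphism in $T$, or $h\in fS$. So it suffices that each monomorphism $h'\in T$ lies in $Th'$. With $u=h'+g'$ we have $h'^2=uh'$, so $h'=u^{-1}h'\,h'\in Th'$. The same then holds for $h'x$ with $x\in S$, and finite sums are handled by Lemma~\ref{i+kpbpo} together with uniseriality: the images are totally ordered, so all terms lie in $h'S$ for the $h'$ with the largest image. I expect the main obstacle to be the reduction in the case $h\in J(S)$: one must make sure that the $h'$ supplied by (i.2) still has image inside $U_e$, which holds since $\mathrm{Im}\,h'=\mathrm{Im}\,h$. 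The other delicate point is the trace-ideal uniqueness argument, which needs Corollary~\ref{modjR}(ii) applied to two ideals in $\mathcal T_r(S)$ that agree modulo $J(S)$.
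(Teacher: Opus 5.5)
Your proposal is correct and, once you commit to the ``cleaner route,'' it is essentially the paper's proof. Lemma~\ref{auxTL}(iii) and Proposition~\ref{rightleft} make $Sf'S$ a nontrivial trace ideal inside $K$ for every monomorphism $f'\in T$, so $Sf'S=SfS$. Purity then reduces, via the image comparison of Lemma~\ref{i+kpbpo}, to the identity $h'=(u^{-1}h')h'$ for monomorphisms $h'\in T$.

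The differences are minor. In part (1) the paper handles non-monomorphisms $h\in T$ by writing $h=f+(h-f)$, where both summands are monomorphisms in $T$, instead of using the image comparison. In part (3) the paper places $h$ inside a pure right ideal supplied by Proposition~\ref{rightleft}, rather than checking $h\in Th$ directly as you do.
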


\begin{Proof} $(1)$. By Proposition~\ref{gfzero}, $SfS$ is the only nontrivial element of $\mathcal{T}_r(S)$, and the trace of a countably generated and pure right ideal of $S$. By Lemma~\ref{auxTL}, $SfS\subseteq T$. 

Let $f'\in T$. If $f'$ be a monomorphism then, by  Lemma~\ref{auxTL}, there exists $g'\in I\setminus K$ such that $g'f'=0$. Therefore, by Proposition~\ref{gfzero}, $Sf'S\in \mathcal{T}_r(S)$ and then  $SfS=Sf'S$ so $f'\in SfS$.

Assume now that $f'\in T$ is not a monomorphism, so that $f'\in J(S)$. Then   $f' = f + (f'-f)$ and $f$ and $f'-f$ are monomorphisms in $T$. By the first part of the argument, they are elements of $SfS$, so $T\subseteq SfS$. This finishes the proof of $T=SfS$.

$(2)$. It follows in a symmetric way using again Proposition~\ref{gfzero} and  Lemma~\ref{auxTL}.

$(3)$ To prove that $T$ is pure as a right ideal it suffices to show that for any $h\in T$, $h\in Th$, cf. Example~\ref{idempotent}. What we are going to show is that for any $h\in T$, there is pure right ideal $J\leq T$ such that $h\in J$, so that $h\in Jh\subseteq Th$.

If $h$ in $T$ is a monomorphism, then by  Lemma~\ref{auxTL}, there exists   $g\in I\setminus K$ such that $gh=0$. So that $u=g+h$ is invertible. By Proposition~\ref{rightleft}, there exists a countably generated pure right ideal $J=\sum _{m\in \Z} u^{-m-1}hS$. Note that $h\in J\subseteq T$. 

Assume now that $h$ is not a monomorphism. By Lemma~\ref{i+kpbpo}, if $h(U)\subseteq f(U)$ then $h\in fI\subseteq fS$ which we already know it is contained in a pure right ideal inside $T$. 

If $f(U) \subseteq h(U)$ then, by Lemma~\ref{i+kpbpo}, there exists $h' \in K\setminus I$ such that $h'(U)=h(U)$, so that $h'\in T$, and $f, h\in h'S$. By, the first part of the argument $h'S$ is contained in a pure right ideal inside $T$, so also does $h$. This finishes the proof of the purity of $T$ as right ideal.

$(4)$ It is proved in a similar way of $(3)$ using  Lemma~\ref{auxTL} and Lemma~\ref{i+kpbpo}.
\end{Proof}

\begin{Ex} \label{computationueum} Let $R$ be  a right chain ring. Let $0\neq H \le J(R)$ be a   right ideal of $R$. By \cite[Lemma~4.3]{pavelqs}, $(R/H)_e=RH/H$. 

Let $\mathcal{M}(H)=\{r\in R\mid rH = H\}$ denote   the idealizer of $H$ in $R$, and set $H'=\bigcap _{r\in \mathcal{M}(H)}rR$.  By \cite[Lemma~4.4]{pavelqs}, $(R/H)_m=H'/H$.    
\end{Ex}

\section{Finitely presented uniserial modules over chain domains}

In this section we  study trace ideals over endomorphism rings of finitely presented uniserial modules over chain domains. 
General results on endomorphism rings of cyclically presented modules over local rings can be found in \cite{AAF}. We just 
briefly recall some of them in our particular setting.

Let  $R$ be a local domain which is a subring of a division ring $D$.
Fix $0 \neq r \in J(R)$ and consider a module $U = R/rR$. 
Thus $U$ has the presentation
\[0\to R\stackrel{r}{\to}R\to U\to 0.\]
If $f\colon U \to U$ is a module homomorphism, then there is a commutative diagram of module homomorphisms with exact rows

\begin{equation} \label{endU}\begin{tikzcd}%[cramped, sep=small]
	0\arrow[r]&R \arrow{r}{r} \arrow{d}{f_2} &  R  \arrow[r] \arrow{d}{f_1} &  U  \arrow[r]\arrow{d}{f}  &0\\
	0 \arrow[r] &R \arrow{r}{r} &  R \arrow[r]  &  U\arrow[r]    & 0
\end{tikzcd}
\end{equation}
where $f_1$ and $f_2$ are uniquely determined up to homotopy. Since $\Hom _R(U, R)=0$, $f_1$ is homotopic to zero if and only its image is in $rR$.

\subsection{The idealizer of \texorpdfstring{$rR$}.} \label{idealizer}

Let $S'=\{t \in R \mid tr \in rR\}$ be the idealizer of $rR$ in $R$. In view of diagram \ref{endU}, it is easy to argue that there is a canonical onto ring morphism $S' \to \mathrm{End}_R(U)$ with kernel $rR$. Therefore, if the endomorphism $f\colon U\to U$ is induced by multiplication by $t\in S'$ and $tr=rt'$ the diagram \ref{endU} becomes

\begin{equation} \label{endUI}\begin{tikzcd}%[cramped, sep=small]
	0\arrow[r]&R \arrow{r}{r} \arrow{d}{t'} &  R  \arrow[r] \arrow{d}{t} &  U  \arrow[r]\arrow{d}{f}  &0\\
	0 \arrow[r] &R \arrow{r}{r} &  R \arrow[r]  &  U\arrow[r]    & 0
\end{tikzcd}
\end{equation}
If we assume that $f\neq 0$, left multiplication by $t$ is an injective endomorphism of $R$. Then the Snake Lemma yields a long exact sequence
\begin{equation} \label{snakelemma}0\to \mathrm{Ker}\, f \to R/t'R \stackrel{r}{\to} R/tR \to \mathrm{CoKer}\, f \to 0\end{equation}

Since $S'$ is included in a division ring $D$, it follows easily that $S'= R \cap rRr^{-1}$. Let $I' = R \cap rJ(R)r^{-1}$ and $K' = J(R) \cap rRr^{-1}$ and note that both are ideals of $S'$. 

If  $t \in S' \setminus (I'\cup K')$ then $t\in R^{*} \cap rR^{*}r^{-1}$, so $t^{-1} \in S'$. Therefore, we can conclude that any (left, right, 2-sided) ideal is contained in $I' \cup K'$ so in fact 
is contained either in $I'$ or in $K'$. Therefore we have proved that 

\begin{Prop} Following the notation above, there are two possibilities 
\begin{enumerate}
\item[(i)] $I' \subseteq K'$ or $K' \subseteq I'$. In this case $S'$ is local and $J(S') = I' \cup K'$
\item[(ii)] $I'+K'=S'$ so that $I'$, $K'$ are the only 
maximal (left, right or 2-sided) ideals and $I'\cap K' = J(S')$. Hence, $rR \subseteq J(S')$.
\end{enumerate}
\end{Prop}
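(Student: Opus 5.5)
The plan is to show that the set of non-units of $S'$ is exactly $I'\cup K'$. The case distinction then follows from the elementary fact that a group which is the union of two subgroups equals one of them.

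\emph{Step 1: $I'\cup K'$ consists of non-units, and both are proper ideals of $S'$.} The paragraph before the statement gives the converse: every $t\in S'\setminus(I'\cup K')$ is invertible in $S'$. For the ideal property, write $I'=R\cap rJ(R)r^{-1}$ and $K'=J(R)\cap rRr^{-1}$, and use that $J(R)$ is an ideal of $R$ and $S'=R\cap rRr^{-1}$. For instance, if $s=rar^{-1}\in S'$ with $a\in R$ and $t=rjr^{-1}\in I'$ with $j\in J(R)$, then $st=r(aj)r^{-1}$ and $aj\in J(R)$. Next take $t\in K'$. Then $t\in J(R)$, so $t^{-1}\notin R$ and $t$ is not a unit of $S'$. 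Take instead $t=rjr^{-1}\in I'$ with $j\in J(R)$. Then $t^{-1}=rj^{-1}r^{-1}$ computed in $D$, and $j^{-1}\notin R$, so $t^{-1}\notin rRr^{-1}$ and again $t$ is not a unit of $S'$. Hence the non-units of $S'$ are exactly $I'\cup K'$, and $1\notin I'$, $1\notin K'$.

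\emph{Step 2: every proper one-sided ideal $A$ of $S'$ lies in $I'$ or in $K'$.} A proper one-sided ideal contains no units, so $A\subseteq I'\cup K'$. Then $A=(A\cap I')\cup(A\cap K')$ is a union of two additive subgroups, so one of them equals $A$.

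\emph{Step 3: case analysis.}

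Case (i): $I'$ and $K'$ are comparable. Then $I'\cup K'$ is the larger of the two, so it is a proper two-sided ideal. Since it contains all non-units, $S'$ is local with $J(S')=I'\cup K'$.

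Case (ii): they are incomparable. Then $I'+K'$ strictly contains both $I'$ and $K'$, so by Step 2 it is not proper, that is, $I'+K'=S'$. Let $M$ be any proper one-sided ideal. By Step 2, $M\subseteq I'$ or $M\subseteq K'$, so every maximal one-sided ideal equals $I'$ or $K'$. Conversely, $I'$ is maximal: a proper $M\supseteq I'$ lies in $I'$ or in $K'$, and $M\subseteq K'$ would give $I'\subseteq K'$, a contradiction. The same argument applies to $K'$. Therefore $J(S')=I'\cap K'$.

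Finally, check $rR\subseteq I'\cap K'$. For $x\in R$ write $rx=r(xr)r^{-1}$. Since $r\in J(R)$ we have $xr\in J(R)$, so $rx\in rJ(R)r^{-1}\cap J(R)$; also $rx\in R$ and $rx\in rRr^{-1}$. Thus $rR\subseteq I'\cap K'=J(S')$.

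The main obstacle is Step 1, namely verifying that elements of $I'$ are not invertible in $S'$; the computation in $D$ given above handles it. Everything else is formal.
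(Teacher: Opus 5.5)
Your proof is correct and follows the same route as the paper, whose argument is exactly your Step 2: elements of $S'\setminus(I'\cup K')$ lie in $R^*\cap rR^*r^{-1}$ and hence are units of $S'$, so every proper one-sided ideal lies in $I'\cup K'$ and therefore in one of the two. You also spell out what the paper leaves implicit: that $I'$ and $K'$ are proper ideals consisting of non-units, the resulting case analysis, and the inclusion $rR\subseteq I'\cap K'$.
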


\begin{Rem} \label{ARtranspose} With the notation above, the Auslander-Reiten transpose of $R/rR$ is the module $R/Rr$. By the symmetry of the hypothesis, $\mathrm{End}_R(R/Rr)\cong S''/Rr$ where $S''= r^{-1}Rr \cap R$. Moreover the inner automorphism $\varphi \colon D \to D$ given by $\varphi (d)=r^{-1}dr$ for any $d\in D$, induces an isomorphism $S'\to S''$.
\end{Rem}

Now we want to consider the case of chain domains. Our arguments are mostly a particular case of  \S 1.2 in \cite{mathiak}. 

Recall that a ring $R$ is a chain ring if the lattice of right ideals  and the lattice of left ideals of $R$ are both totally ordered.

\begin{Lemma} Let $R$ be a domain contained in a division ring $D$. The following are equivalent,
\begin{itemize} \label{valuation}
    \item[(i)] for every $0 \neq d \in D$,
either $d \in R$ or $d^{-1} \in R$,
\item[(ii)] $R$ is a chain domain and its Ore division ring of fractions is $D$.
\end{itemize}
\end{Lemma}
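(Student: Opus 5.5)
We prove the two implications separately, working throughout with the standard characterization of chain domains: every right ideal and every left ideal is totally ordered by inclusion, and in a domain this is equivalent to requiring, for all nonzero $a,b\in R$, that $a\in bR$ or $b\in aR$, and likewise $a\in Rb$ or $b\in Ra$.

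\emph{(i)$\Rightarrow$(ii).} Take nonzero $a,b\in R$ and put $d=b^{-1}a\in D$. By (i), either $d\in R$, so that $a=bd\in bR$, or $d^{-1}=a^{-1}b\in R$, so that $b\in aR$. Hence any two principal right ideals are comparable. This extends to arbitrary right ideals: if $A\not\subseteq B$, choose $a\in A\setminus B$; for every $b\in B$ we cannot have $a\in bR\subseteq B$, so $b\in aR\subseteq A$, giving $B\subseteq A$. The left side is symmetric, using $d=ab^{-1}$. Thus $R$ is a chain domain. A chain domain is right and left Ore, since $aR\cap bR$ is the smaller of the two and is therefore nonzero. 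To see that its classical ring of fractions is $D$, write any nonzero $d\in D$ as $d$ or as $(d^{-1})^{-1}$ with the relevant element in $R$. This shows that $D$ equals the set $\{ab^{-1}\mid a,b\in R,\ b\neq 0\}$ computed inside $D$. By the universal property, that set is the Ore quotient ring, so the classical ring of fractions of $R$ is $D$.

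\emph{(ii)$\Rightarrow$(i).} Let $0\neq d\in D$. Since $D$ is the Ore ring of fractions, write $d=ab^{-1}$ with $a,b\in R\setminus\{0\}$. The left ideals $Ra$ and $Rb$ are comparable. If $a=cb$ with $c\in R$, then $d=cbb^{-1}=c\in R$. If instead $b=ca$, then $d^{-1}=ba^{-1}=c\in R$.

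We expect the only point needing care to be the identification of $D$ with the Ore ring of fractions in the first implication. This rests on the fact that the classical ring of fractions is unique up to isomorphism over $R$. Consequently, a division ring containing $R$ in which every element has the form $ab^{-1}$ is that ring of fractions.
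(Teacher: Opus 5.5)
Your proof is correct and complete. The comparability of principal one-sided ideals via $d=b^{-1}a$ and $d=ab^{-1}$ is right, and so is the Ore condition obtained from comparability. The identification of $D$ with the classical quotient ring also works: the universal map $Q(R)\to D$ over $R$ is injective because $Q(R)$ is a division ring, and surjective because every element of $D$ has the form $ab^{-1}$.

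The paper gives no proof of this lemma to compare against. It quotes it as the standard characterization of valuation rings in a division ring, and refers the surrounding material to \S 1.2 of Mathiak. Your argument is the standard one.

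In (ii)$\Rightarrow$(i) you correctly read ``its Ore division ring of fractions is $D$'' as referring to the given inclusion $R\subseteq D$. Under a merely abstract isomorphism the implication fails: for example, $k[[x^2]]\subseteq k((x))$ contains neither $x$ nor $x^{-1}$.
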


In the situation of Lemma~\ref{valuation},  $R$ is said to be a \emph{valuation ring} in $D$. In this case, for any $0\neq r\in R$, $rRr^{-1}$ is also a valuation ring in $D$.

\begin{Lemma} \label{s'principal}
Assume $R$ is a valuation domain in a division ring $D$, and fix $0 \neq r \in J(R)$.
Set $S' = R \cap rRr^{-1}$. Let $H$ be a left $S'$-submodule of $R$ or of $rRr^{-1}$. The for any
$h_1,\dots,h_k \in H$ then there exists $h \in H$ such that $h_1,\dots,h_k \in S'h$.
A Similar statement holds on the right. 
\end{Lemma}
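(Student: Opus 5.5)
The key observation is that the elements of $H$, sitting inside a valuation ring of $D$, are totally ordered by left divisibility by elements of $S'$; we then take as $h$ the element that the others are multiples of. By induction on $k$ it suffices to treat $k=2$. Given $h_1,h_2\in H$ with, say, $h_2\neq 0$, I will show that either $h_1\in S'h_2$ or $h_2\in S'h_1$. In the first case we take $h=h_2$, and in the second $h=h_1$. Both choices lie in $H$, which is all we need.

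For $k=2$, assume both $h_1,h_2$ are nonzero and put $d=h_1h_2^{-1}\in D$. Since $R$ is a valuation ring in $D$, either $d\in R$ or $d^{-1}\in R$. Since $rRr^{-1}$ is also a valuation ring in $D$ (as remarked after Lemma~\ref{valuation}), either $d\in rRr^{-1}$ or $d^{-1}\in rRr^{-1}$. If $d$ lies in both rings, then $d\in S'$ and $h_1=dh_2\in S'h_2$. If $d^{-1}$ lies in both, then $h_2=d^{-1}h_1\in S'h_1$. The case distinction is therefore entirely about the mixed cases.

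The mixed cases are the main obstacle: either $d\in R\setminus rRr^{-1}$ with $d^{-1}\in rRr^{-1}\setminus R$, or the symmetric situation. Here I will use that $H$ lies inside one of the two rings, say $H\subseteq R$; the other case is symmetric. Then $d\in R$ means $h_1\in Rh_2$, so $h_1$ is "smaller" in $R$, while $d^{-1}\in rRr^{-1}$ says $h_2$ is smaller in the other valuation. A direct comparison of $d$ alone therefore cannot close the argument.

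My first attempt at the mixed case is to replace $h_1$ by a suitable sum. Consider $h_1+h_2=(d+1)h_2$ and $h_1+h_2$ versus $h_1$, exploiting that $H$ is closed under addition. The point is that $d\notin rRr^{-1}$ forces $d^{-1}\in rJ(R)r^{-1}$. Hence $1+d^{-1}$ is a unit of $rRr^{-1}$, and since $d^{-1}\notin R$ it is not in $R$. We then look for $h=h_1+h_2$, or $h_1+sh_2$ with $s\in S'$, such that both $h_1h^{-1}$ and $h_2h^{-1}$ lie in $R\cap rRr^{-1}$. Concretely, $h_2h^{-1}=(d+1)^{-1}$ and $h_1h^{-1}=d(d+1)^{-1}=(1+d^{-1})^{-1}$.

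I then check each quotient against each ring. In $rRr^{-1}$, $1+d^{-1}$ is a unit, so $(1+d^{-1})^{-1}\in rRr^{-1}$, and $(d+1)^{-1}=d^{-1}(1+d^{-1})^{-1}\in rRr^{-1}$. In $R$, $d\in R$ and $d^{-1}\notin R$ give $d\in J(R)$, so $1+d$ is a unit of $R$. Then $(d+1)^{-1}\in R$ and $d(d+1)^{-1}\in R$. Thus both quotients lie in $S'$ and $h=h_1+h_2\in H$ works. The remaining mixed case is handled identically with the roles of the two rings swapped. The right-hand version follows symmetrically with $h_2^{-1}h_1$.
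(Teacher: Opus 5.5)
Your argument is correct and is essentially the paper's proof. Both reduce to $k=2$, compare $h_1,h_2$ in the two valuation rings $R$ and $rRr^{-1}$, and in the mixed case $d\in J(R)$, $d^{-1}\in rJ(R)r^{-1}$ take $h=h_1+h_2$; you phrase the comparison through $d=h_1h_2^{-1}$, while the paper uses $h_1\in Rh_2$ and $h_2\in rJ(R)r^{-1}h_1$. Delete your opening claim that one always gets $h_1\in S'h_2$ or $h_2\in S'h_1$, since it fails precisely in the mixed case you later treat. Also note that the assumption $H\subseteq R$ is never actually used: only closure of $H$ under addition is needed.
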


\begin{Proof} It suffices to show the  case $k=2$.
Let $H$ be a left $S'$-submodule of $R$, $h_1,h_2 \in H$. We may suppose that $h_1 \in Rh_2$.
If also $h_1 \in rRr^{-1}h_2$ then $h_1 \in S'h_2$. Assume that $h_2 \in rJ(R)r^{-1}h_1$.

If $h_1 \in R^{*}h_2$, we get $h_2\in Rh_1$ and $h_2 \in S'h_1$ follows. 
It remains to consider $h_1 \in J(R)h_2$. In this case, $h_1,h_2 \in R(h_1+h_2)$
and also $h_1,h_2 \in rRr^{-1}(h_1+h_2)$. As before, we get $h_1,h_2 \in S'(h_1+h_2)$.   
\end{Proof}

\begin{Cor}
$S'\subseteq R$  and $S' \subseteq rRr^{-1}$ are flat extensions.
\end{Cor}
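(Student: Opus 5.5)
The corollary should follow directly from Lemma~\ref{s'principal} together with the standard fact that a module is flat exactly when it is a directed union (direct limit) of flat submodules. I would present both extensions, $S'\subseteq R$ and $S'\subseteq rRr^{-1}$, as directed unions of cyclic free $S'$-modules, and treat them on the left and on the right.

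Take the left module ${}_{S'}R$. By Lemma~\ref{s'principal} applied with $H=R$, every finite subset of $R$ lies in a cyclic submodule $S'h$ with $h\in R$. So $R$ is the directed union of its cyclic submodules $S'h$, $h\in R\setminus\{0\}$. Each of these is free of rank one: $R$ sits inside the division ring $D$, so $sh=0$ with $h\neq 0$ forces $s=0$, and hence $S'\to S'h$, $s\mapsto sh$, is an isomorphism. A direct limit of free modules is flat, so $R$ is flat as a left $S'$-module.

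The same argument with $H=rRr^{-1}$, which is a left $S'$-submodule of $D$ because $S'\subseteq rRr^{-1}$ and $rRr^{-1}$ is a ring, shows that ${}_{S'}(rRr^{-1})$ is flat. The right-hand versions come from the right analogue stated at the end of Lemma~\ref{s'principal}: every finite subset lies in some $hS'$, and $hS'\cong S'_{S'}$, again because there are no zero divisors in $D$. This gives flatness of $R_{S'}$ and of $(rRr^{-1})_{S'}$. Alternatively, one can transport the left statements to the right by symmetry via the isomorphism in Remark~\ref{ARtranspose}.

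The only point needing care is confirming that $R$ and $rRr^{-1}$ really are $S'$-submodules in the sense required by Lemma~\ref{s'principal}, and that its right-hand version holds as stated. Both are immediate from $S'=R\cap rRr^{-1}$, so I expect no genuine obstacle; the content is entirely in Lemma~\ref{s'principal}.
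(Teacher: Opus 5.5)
Your argument is correct and is exactly the paper's: Lemma~\ref{s'principal} and its right-hand version exhibit $R$ and $rRr^{-1}$ on either side as directed unions of cyclic $S'$-submodules, which are free because $D$ has no zero divisors, so these modules are flat. Drop the suggested alternative via Remark~\ref{ARtranspose}: conjugation by $r$ is a ring isomorphism $S'\to S''$, not an anti-isomorphism, so on its own it does not turn left-module statements into right-module ones.
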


\begin{Proof} It follows from Lemma~\ref{s'principal} that  $R$ and $rRr^{-1}$ are the directed union of its cyclic submodules on the right and on the left. Therefore, they are flat as $S'$-modules on either side.
\end{Proof}

We finally state the crucial property of $S'$ we are interested in: 

\begin{Lemma}\label{equalityideals} Assume $R$ is a valuation domain in a division ring $D$. Fix $0 \neq r \in J(R)$.
Set $S' = R \cap rRr^{-1}$. Then,

\begin{itemize}
    \item[(i)] If $I$ is a left ideal of $S'$. Then $I = RI \cap rRr^{-1}I$. In particular, if $I$ and $J$
are left ideals of $S'$ such that  $RI = RJ$ and $rRr^{-1}I = rRr^{-1}J$, then $I = J$.

\item[(ii)] If $I$ is a right ideal of $S'$. Then $I = IR \cap IrRr^{-1}$. In particular, if $I$ and $J$
are right ideals of $S'$ such that  $IR = JR$ and $IrRr^{-1} = JrRr^{-1}$, then $I = J$.

\end{itemize}
\end{Lemma}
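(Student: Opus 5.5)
The plan is to prove (i) directly; (ii) is the mirror image and follows by the same argument with left and right interchanged. The inclusion $I\subseteq RI\cap rRr^{-1}I$ is trivial because $1\in R$ and $1\in rRr^{-1}$. So the content is the reverse inclusion. Take $x\in RI\cap rRr^{-1}I$.

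The first step is to reduce to a single generator on each side. Since $x\in RI$, we can write $x=\sum a_ih_i$ with $a_i\in R$ and $h_i\in I$. By Lemma~\ref{s'principal}, applied to the left $S'$-submodule $I\subseteq S'\subseteq R$, there is $h\in I$ with all $h_i\in S'h$. Hence $x\in Rh$. In the same way, $x\in rRr^{-1}h'$ for some $h'\in I$. Applying Lemma~\ref{s'principal} once more to the pair $h,h'$, we get $h''\in I$ with $h,h'\in S'h''$. Therefore $x=a h''=b h''$ with $a\in R$ and $b\in rRr^{-1}$.

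The second step is to cancel. If $h''=0$ then $x=0\in I$. Otherwise we work inside the division ring $D$: from $ah''=bh''$ we obtain $a=b\in R\cap rRr^{-1}=S'$. Hence $x=ah''\in S'I=I$.

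The ``in particular'' clause is then immediate: $I=RI\cap rRr^{-1}I=RJ\cap rRr^{-1}J=J$. For (ii) we use the right-hand version of Lemma~\ref{s'principal} and cancel on the right in $D$.

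The only delicate point is making sure Lemma~\ref{s'principal} applies to $I$, viewed as a left $S'$-submodule of $R$ (and, since $S'\subseteq rRr^{-1}$, also of $rRr^{-1}$). It does, because $I\subseteq S'$.
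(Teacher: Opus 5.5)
Your proof is correct and follows essentially the same route as the paper: two applications of Lemma~\ref{s'principal} put an element of $RI\cap rRr^{-1}I$ in the form $ah''=bh''$ with a single $h''\in I$, and cancelling in $D$ gives $a=b\in S'$. You are slightly more careful than the paper in spelling out the reduction from finite sums and in treating the case $h''=0$.
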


\begin{Proof} $(i)$
Let $I$ be a left ideal of $S'$. Then $RI$ is a left ideal of $R$ and 
$rRr^{-1}I$ is a left ideal of $rRr^{-1}$. Obviously $RI \cap rRr^{-1}I$
contains $I$. 

Consider an element $t \in RI \cap rRr^{-1}I$. By Lemma~\ref{s'principal}, there exist $i_1, i_2 \in I, u \in R, v \in rRr^{-1}$ such that 
$t = ui_1 = vi_2$. Apply Lemma~\ref{s'principal} again to see that there are $t_1,t_2 \in S'$
and $i \in I$ such that $i_1 = t_1i$ and $i_2= t_2i$. Altogether $t = u'i = v'i$, where 
$u' = ut_1 \in R$ and $v' = vt_2 \in rRr^{-1}$. Since everything happens in a division ring, 
$u' = v' \in S'$ and then $t = u'i \in I$. 

A symmetric argument shows $(ii)$.
\end{Proof}

\subsection{Traces of \texorpdfstring{$\End _R(R/rR)$}.} Now we determine $\mathcal{T} (S)$ for 
$S=\End _R(R/rR)$ where  $R$ is a chain domain and $0\neq r\in J(R)$. Hence, $S=S'/rR$ where $S'$ is the ring we have studied in \S \ref{idealizer}; moreover $I=I'/rR$ and $K=K'/rR$.

\begin{Remark}\label{ciclickernels} Since $R_R$ is uniserial, if $t\in S'$ induces a nonzero homomorphism $f\colon R/rR\to R/rR$ then $tR\supsetneq rR$ and the exact sequence (\ref{snakelemma}) becomes

\begin{equation} \label{snakelemma2}0\to \mathrm{Ker}\, f \to R/t'R \stackrel{0}{\to} R/tR \to \mathrm{CoKer}\, f \to 0\end{equation}

Thus $\mathrm{Ker}\, f\cong R/t'R$ is always principal and $R/tR \cong \mathrm{CoKer}\, f $. So that $f\in I\setminus K$ if and only if $t\in R^*\cap rJ(R)r^{-1}$ and $f\in K\setminus I$ if and only if $t\in J(R)\cap rR^*r^{-1}$.
\end{Remark}

Now we can give the following straightened version of Theorem~\ref{TL} for the case of finitely presented modules over chain domains which proves the final part of the Main Theorem.

\begin{Th} \label{TLfp} Let $R$ be a chain domain, let $0\neq r\in J(R)$. Let $U=R/rR$ and let $S=\mathrm{End}_R(U)$.  Assume that there exist $g\in I\setminus K$ and $f\in K\setminus I$ such that $gf= 0$. Then:
\begin{enumerate}
    \item[(1)] $T=SfS=Sf'$ for any monomorphism $f'\in T$ and is pure as a right ideal; 
    \item[(2)] $L=SgS=g'S$ for any epimorphism $g'\in L$, and it is pure as a left ideal.
    \item[(3)] $\mathcal{T}_r(S)=\{0, S, T\}$ and $\mathcal{T}_\ell(S)=\{0, S, L\}$.
\end{enumerate}   
\end{Th}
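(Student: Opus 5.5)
Most of the statement is already contained in Theorem~\ref{TL} and Proposition~\ref{gfzero}. Theorem~\ref{TL} gives $T=SfS$, the purity of $T$ as a right ideal, $L=SgS$, and the purity of $L$ as a left ideal. Proposition~\ref{gfzero}(iv) gives $\mathcal{T}_r(S)=\{0,S,SfS\}$ and $\mathcal{T}_\ell(S)=\{0,S,SgS\}$, which is (3); $U=R/rR$ is cyclic, so Proposition~\ref{gfzero} applies. So the only new content is $T=Sf'$ for every monomorphism $f'\in T$, and dually $L=g'S$ for every epimorphism $g'\in L$. I will prove the first in detail and obtain the second by the symmetric argument, or via the Auslander--Reiten transpose of Remark~\ref{ARtranspose}.

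Fix a monomorphism $f'\in T$. Clearly $Sf'\subseteq T$, so I must show $T\subseteq Sf'$. I will lift everything to $S'=R\cap rRr^{-1}$, where $S=S'/rR$ and $rR\subseteq J(S')$. Let $\tilde T$ be the preimage of $T$ in $S'$ and $t$ a lift of $f'$. By Remark~\ref{ciclickernels}, $t\in J(R)\cap rR^*r^{-1}$, and the same holds for lifts of all monomorphisms in $T$.

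The plan is to apply Lemma~\ref{equalityideals}(i) to the left ideals $\tilde T$ and $S't+rR$ of $S'$. I must check:
\[
R\tilde T=R(S't+rR)\quad\text{and}\quad rRr^{-1}\tilde T=rRr^{-1}(S't+rR).
\]
Since $t\in rR^*r^{-1}$, we have $rRr^{-1}t=rRr^{-1}$, so the second equality is automatic, both sides being all of $rRr^{-1}$. For the first equality, $R(S't+rR)=Rt+RrR$, and I need $R\tilde T\subseteq Rt+RrR$.

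Since $R$ is a chain domain, the left ideal $Rt$ is comparable with every $Rh$. Take $h\in\tilde T$, inducing $\bar h\in T$.
\begin{itemize}
\item If $\bar h$ is a monomorphism, then $h\in J(R)\cap rR^*r^{-1}$.
\item If $\bar h$ is not a monomorphism, write $\bar h=f'+(\bar h-f')$, where $\bar h-f'$ is a monomorphism in $T$, as in the proof of Theorem~\ref{TL}.
\end{itemize}
So it suffices to show $Rh\subseteq Rt+RrR$ for lifts $h$ of monomorphisms in $T$. Here I will use the intrinsic description $\bar h(U)\subseteq U_e$, together with Example~\ref{computationueum}: $U_e=RrR/rR$ since $r\in J(R)$ and $H=rR$. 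Translating, the image of $\bar h$ is $(hR+rR)/rR\subseteq RrR/rR$, so $h\in RrR$. Hence $R\tilde T\subseteq RrR$, and I need $RrR\subseteq Rt+RrR$, which is trivial. Thus the first equality reduces to $R\tilde T\subseteq RrR$, with the reverse containment unnecessary.

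Then Lemma~\ref{equalityideals} gives $\tilde T=S't+rR$, hence $T=Sf'$. The delicate point, and the main obstacle, is whether Lemma~\ref{equalityideals} needs $rR$ inside the ideal considered; it does hold, since $rR\subseteq\tilde T$. I also need the identification of the images of endomorphisms via $U_e=RrR/rR$ to be correct; if the inclusion $R\tilde T\subseteq RrR$ failed, I would instead compare $R\tilde T$ with $Rt$ using the fact that $f'\in T$ forces $t\in RrR$, and conversely. Dually, for an epimorphism $g'\in L$ lifted to $s\in R^*\cap rJ(R)r^{-1}$, we have $sR=R$, and Lemma~\ref{equalityideals}(ii) with $U_m$ computed as in Example~\ref{computationueum} yields $L=g'S$.
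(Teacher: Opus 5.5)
\textbf{Part (1): correct, by a different route.} The paper first invokes Theorem~\ref{TL} to get $T=Sf'S$. It then takes the partner epimorphism $g'\in L$ with $g'f'=0$ from Lemma~\ref{auxTL}(iii), lifts it to $s\in R^*\cap rJ(R)r^{-1}$, and uses $st\in rR$ to get $t\in RrR$. Finally it applies Lemma~\ref{equalityideals} to $S'tS'+rR$ and $S't+rR$, which shows $Sf'S=Sf'$.

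You instead apply Lemma~\ref{equalityideals}(i) directly to $\tilde T$ and $S't+rR$. Since $S't+rR\subseteq \tilde T$, only two inclusions are needed: $R\tilde T\subseteq Rt+RrR$, and $rRr^{-1}\tilde T\subseteq rRr^{-1}t=rRr^{-1}$. The second is trivial. The first follows from $U_e=RrR/rR$, which you can also check by hand: kernels of epimorphisms are the $urR/rR$ with $u\in R^*$ and $u^{-1}\in S'$; one has $RrR=\sum_{u\in R^*}urR$; and $urR\subseteq rR$ whenever $u^{-1}\notin S'$.

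This gives $T=Sf'$ without the trace-ideal uniqueness behind Theorem~\ref{TL} and without any partner of $f'$. Your split into monomorphisms and non-monomorphisms is unnecessary, since every element of $T$ has image in $U_e$ by definition.

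\textbf{Part (2): your method does not dualize as stated.} Take an epimorphism $g'\in L$ with lift $s$, and write $sr=rs'$ with $s'\in J(R)$. Now the side $\tilde L R=R=(sS'+rR)R$ is the trivial one. Everything rests on
\[
\tilde L\, rRr^{-1}\subseteq (sS'+rR)rRr^{-1}=r(s'R+RrR)r^{-1},
\]
that is, on $r^{-1}hr\in s'R+RrR$ for every $h\in\tilde L$.

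The description $U_m=H'/rR$ with $H'=\bigcap_{a\in\mathcal M(rR)}aR$ from Example~\ref{computationueum} does not by itself give this. In part (1), the condition $\bar h(U)\subseteq RrR/rR$ translates in one line into $h\in RrR$. Here the condition $hH'\subseteq rR$ is not a transparent condition on $r^{-1}hr$, and you give no argument for it.

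What is missing is the partner.
\begin{itemize}
\item If $\bar h\in L$ is an epimorphism, Lemma~\ref{auxTL}(iv) gives a monomorphism $f''$ with $\bar h f''=0$. Its lift has the form $rur^{-1}$ with $u\in R^*$. Then $h\,rur^{-1}\in rR$, which gives $r^{-1}hr\in RrR$.
\item If $\bar h\in L$ is not an epimorphism, write $\bar h=g'+(\bar h-g')$; both summands are epimorphisms in $L$.
\end{itemize}
With this, $\tilde L\, rRr^{-1}\subseteq rRrRr^{-1}$, and your Lemma~\ref{equalityideals}(ii) argument goes through.

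This is exactly the relation $st\in rR$ that the paper uses, which is why its proof of (1) dualizes verbatim to (2) and yours does not. Your alternative via the Auslander--Reiten transpose needs the same input. Transporting part (1) to $R/Rr$ produces the ideal $\{\bar h\in I\mid r^{-1}hr\in RrR\}$. Identifying it with $L$ requires the same inclusion, or else the uniqueness of the nontrivial left trace ideal.
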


\begin{Proof}

$(1)$. In view of Theorem~\ref{TL} it remains to see that $T=Sf'$ for any monomorphism $f'\in T$. Following Remark~\ref{ciclickernels}, there is  $t \in J(R)\cap rR^*r^{-1}$
such that $f'$ is induced by multiplication by $t\in S' \setminus I'$. Note that $tr=rt'$ with $t'\in R^*$.

By Lemma~\ref{auxTL}, there exists an epimorphism $g'\in L$ such that $g'f'=0$. Following Remark~\ref{ciclickernels}, there is  $s \in R^*\cap rJ(R)r^{-1}$
such that $g'$ is induced by multiplication by $s\in S' \setminus K'$. Note that $sr=rs'$ with $s'\in J(R)$. By hypothesis, $st \in rR$.

Let $\tau\colon S'\to {\rm End}_R(U)$ be the canonical onto ring homomorphism with kernel $rR$.
Note that if $S'tS' + rR = S't + rR$ then
$\tau(S')\tau(t)\tau(S') = S\tau(t)$ is an idempotent ideal 
of ${\rm End}_R(U)$ finitely generated on the left, hence a trace of a countably 
generated projective right module over $S={\rm End}_R(U)$.
Hence, it is enough to show $S'tS' + rR = S't + rR$.

Note that $R(S'tS' + rR) = RrR$ since $st \in rR$ and $s \in R\setminus J(R)$ imply 
$t \in RrR$. Similarly $R(S't + rR) = RrR$.

Also $rRr^{-1}(S'tS' + rR) = rRr^{-1}tS'+rR$. Now $tr = rt'$ (so $r^{-1} t=t'r^{-1}$) for  $t' \in R\setminus J(R)$
gives $Rr^{-1}t = Rr^{-1}$, so $rRr^{-1}(S'tS'+rR) = rRr^{-1}S'+rR = rRr^{-1}$.
Similarly $rRr^{-1}(S't + rR) = rRr^{-1}t + rR = rRr^{-1}$.

By Lemma~\ref{equalityideals}, we can conclude that $S'tS' + rR = S't + rR$.

$(2)$. The proof that $L=g'S$ for any epimorphism $g'\in L$, is just dual to the one above.
\end{Proof}

\begin{Ex} \label{nearlysimple} Assume that $R$ is a nearly simple chain domain, that is,
a chain domain having exactly 3 two-sided ideals: $0,J(R),$ and $R$. Note that this implies that $0\neq J(R)=J(R)^2$ is not a finitely generated ideal. 

Let $0\neq r,s\in J(R)$,  $R/rR\cong R/sR$ if and only if $RrR=RsR$ by \cite[Proposition~2.21]{puninskibook}. Therefore,  there are only two isomorphism classes of indecomposable finitely presented right modules: $R$ and $R/rR$ for any $0\neq r \in J(R)$ (see \cite[Proposition~14.17]{puninskibook}).

Fix  $0\neq r\in J(R)$. Since $J(R)$ is not finitely generated, there exists $s\in J(R)$ such that $rR\subsetneq sR.$ Therefore, we have the exact sequence
\[0\to sR/rR\stackrel{\varepsilon}\to R/rR \stackrel{\pi}\to R/sR\to 0\]
where $\varepsilon$ denotes the   inclusion and $\pi$ is the canonical projection. Fix   isomorphisms $f'\colon R/rR \to sR/rR$ and $g'\colon R/sR \to R/rR$, and set $f=\varepsilon \circ f'$ and $g =g'\circ \pi$. Then $g\circ f=0$ and $f\in I\setminus K$ while $g\in K\setminus I$. This implies that $U=R/rR$ and $S=\mathrm{End}_R(U)$ fulfill the hypotheses of Theorems~\ref{TL} and \ref{TLfp}. In particular, $S$ has a countably generated indecomposable projective right $S$-module $P$ such that $P/PI\cong S/I$. Note that $P$ is constructed as a direct limit of the sequence
\[S\stackrel{f\times -}{\to}S  \stackrel{f \times-}{\to}S \stackrel{f\times -}{\to} \cdots\]
By Proposition~\ref{equivalencia}, this projective module corresponds to a module $V = P \otimes_{S}U$ which is a direct limit of the sequence
\[U\stackrel{f}{\to}U  \stackrel{f}{\to}U \stackrel{f}{\to} \cdots\]
Since $f$ is mono and not epi, $V$ is a uniserial module which is not finitely generated. In particular, $V$ is isomorphic to a direct summand 
of $U^{(\omega)}$ but not to a direct summand of $U^k$ for any $k \in \N$, that is, $V$ is not quasi-small in the sense of \cite{Dung-Facchini}.

By Example~\ref{computationueum},  $U_e =RrR/rR= J(R)/rR$. Which implies that, in  $S=\End _R (R/rR)$ we have
$T=K$.   

By Example~\ref{computationueum} and because any non-zero cyclic submodule of $R/rR$ is isomorphic to $R/rR$, $U_m=0$. So that, in  $S=\End _R (R/rR)$, we have $L=I$.
\end{Ex}

In view of Example~\ref{nearlysimple} we get the following Corollary of Theorems~\ref{TL} and \ref{TLfp} which gathers some of the main results obtained by Puninski,  and by Dubrovin and Puninski on the endomorphism ring of $U=R/rR$ with $0\neq r\in J(R)$ and $R$ an nearly simple chain domain.

\begin{Cor} \label{cor:nearlysimple} Let $R$ be a nearly simple chain domain, and let $0\neq r\in J(R)$. Set $U=R/rR$ and let  $S =\mathrm{End}_R(U)$. Then
\[I=\{f\in S\mid f\mbox{ is not a monomorphism}\}\]
and 
\[K=\{f\in S\mid f\mbox{ is not an epimorphism}\}.\]
are two uncomparable maximal (right or left) ideals of $S$. 
\begin{enumerate}
\item  For any $f\in K\setminus I$ there exists $g\in I\setminus K$ such that $gf=0$.
\item  For any $g\in I\setminus K$  there exists $f\in K\setminus I$  such that $gf=0$.
\item $K$ is the trace of a countably generated projective and pure right ideal $P$ of $S$. Moreover, $K$ is pure as a right ideal and  $K=Sf$ for any $f\in K\setminus I$.
    \item Any projective right $S$-module is isomorphic to a direct sum of a free right $S$-module  and a direct sum of copies of $P$; therefore, the only ideals of $S$ that are traces of projective right $S$-modules are $0, S$ and $K$. 
    \item $I$  is the trace of a countably generated projective and pure left ideal $Q$ of $S$. Moreover, $I$ is pure as a left ideal and $I=gS$ for any $g\in I\setminus K$;
    \item Any projective left $S$-module is isomorphic to a direct sum of a free left $S$-module and a direct sum copies of $Q$; therefore, the only ideals of $S$ that are traces of projective left $S$ modules are $0, S$ and $I$.
\end{enumerate}
\end{Cor}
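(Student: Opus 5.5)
The plan is to reduce everything to Example~\ref{nearlysimple} combined with Theorems~\ref{TL} and \ref{TLfp}. Example~\ref{nearlysimple} already provides a monomorphism $f\in K\setminus I$ and an epimorphism $g\in I\setminus K$ with $gf=0$. (The example's labels $f\in I\setminus K$, $g\in K\setminus I$ are swapped; $f=\varepsilon\circ f'$ is mono and not onto, and $g=g'\circ\pi$ is onto and not mono.) So $I+K=S$, since by Remark~\ref{monoepi} the sum $f+g$ is a unit. Hence $I$ and $K$ are the two incomparable maximal ideals of $S$. The example also computes $U_e=J(R)/rR$ and $U_m=0$.

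\emph{Step 1: identify $T=K$ and $L=I$.} Every $h\in S$ has image in a proper submodule when $h\in K$. The proper submodules of $R/rR$ are $xR/rR$ with $rR\subseteq xR\subsetneq R$, so all of them lie in $J(R)/rR=U_e$. Thus $T=\{h\in K\mid h(U)\subseteq U_e\}=K$. Since $U_m=0$, the condition $U_m\subseteq\mathrm{Ker}\,h$ is vacuous, so $L=I$.

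\emph{Step 2: statements (1) and (2).} Take $f_0\in K\setminus I$. Then $f_0$ is a monomorphism lying in $T=K$, and Lemma~\ref{auxTL}(iii) gives an epimorphism $g_0\in L=I$ with $g_0f_0=0$; being an epimorphism, $g_0\in I\setminus K$. Dually, for $g_0\in I\setminus K$, Lemma~\ref{auxTL}(iv) gives a monomorphism $f_0\in T=K$ with $g_0f_0=0$, hence $f_0\in K\setminus I$.

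\emph{Step 3: statements (3)--(6).} Apply Theorem~\ref{TL} and Theorem~\ref{TLfp} using the pair from the example. This gives that $K=T$ is the trace of a countably generated pure projective right ideal $P$, is pure as a right ideal, and equals $Sf'$ for every monomorphism $f'\in T$, that is, for every $f'\in K\setminus I$. Symmetrically, $I=L=g'S$ for every $g'\in I\setminus K$, $I$ is pure as a left ideal, and $I$ is the trace of $Q$. The decomposition of projective modules and the lists $\mathcal T_r(S)=\{0,S,K\}$, $\mathcal T_\ell(S)=\{0,S,I\}$ follow from Proposition~\ref{gfzero}(iii),(iv) together with Theorem~\ref{TLfp}(3).

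The only genuinely new point is Step 1, and specifically the claim that every proper submodule of $U$ lies in $U_e$. This reduces to the fact that a proper right ideal $xR$ of $R$ is contained in $J(R)$, so I expect no real obstacle. I would still check the identification $U_e=RrR/rR$ from Example~\ref{computationueum}: it uses that $RrR=J(R)$, which holds because $R$ is nearly simple and $r\neq0$ lies in $J(R)$.
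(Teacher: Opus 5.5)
Your proposal is correct and follows the route the paper intends. Example~\ref{nearlysimple} supplies the pair $f,g$ and the computations giving $T=K$ and $L=I$; you are also right that its labels $f\in I\setminus K$, $g\in K\setminus I$ are swapped. After that, Lemma~\ref{auxTL}(iii),(iv) give (1)--(2), and Theorems~\ref{TL}, \ref{TLfp} together with Proposition~\ref{gfzero} give (3)--(6).

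One small imprecision: not every proper submodule of $R/rR$ is of the form $xR/rR$. This does no harm, since images of endomorphisms of the cyclic module $U$ are cyclic, and in any case every proper right ideal of the local ring $R$ lies in $J(R)$, so $T=K$ holds as you claim.
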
 

The reader may also want to have a look to \cite[Theorem~11.87]{libro2} for a nice and clear account on the amazing properties of the ring $S$.

\section{Non-serial direct summand of a serial module via lifting modulo the trace ideal of a projective module}

Recall that a module is serial if it is a direct sum of uniserial modules.  In this section we will show how to obtain
Puninski's example   of  a direct summand of a serial module which 
is not a direct sum of indecomposable modules \cite{Pun01} by using the lifting of projective modules modulo a trace ideal.  We briefly recall this topic in the next subsection.

\subsection{Lifting projective modules modulo a trace ideal.}

\begin{Lemma}
\label{traces.quo}
\cite[Theorem 3.1, Corollary 3.2]{traces} \cite[Proposition 3.4]{wiegand}
Let $\Lambda$ be a ring, and let $T$ be the trace of a projective right $\Lambda$-module. Let $P'$ be a projective right $\Lambda/T$-module. Then,
\begin{enumerate}
    \item[(1)] There is a projective right $\Lambda$-module $X$ such that $T\subseteq \Tr_\Lambda(X)$, $X/XT\cong P'$ and $\Tr_\Lambda(X)/T=\Tr_{\Lambda/T}(P')$.
    \item[(2)] If $T \in \mathcal T(\Lambda)$ and $P'$ is countably generated, then $X$ can be taken to be countably generated.
    \item[(3)] Let $T \in \mathcal T(\Lambda)$, let $P$ be a countably generated projective right $\Lambda$-module with trace ideal $T$, and assume that $P'$ is finitely generated. Let $X_1$ and $X_2$ be countably generated projective right $\Lambda$-modules such that, for $i=1,2$, $X_i/X_iT\cong P'$. Then $X_1\oplus P^{(\omega)}\cong X_2\oplus P^{(\omega)}$.
\end{enumerate}
\end{Lemma}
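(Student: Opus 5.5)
The plan is to prove (1) by constructing $X$ as a countable direct limit and checking projectivity through a splitting argument; (2) then comes out of the construction, and (3) is an Eilenberg-type swindle.

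\emph{Step 1: reduce to free modules.} Since $P'$ is projective over $\Lambda/T$, write $P'\oplus Q'\cong(\Lambda/T)^{(A)}$, with $A$ countable when $P'$ is countably generated. Then $P'$ is the image of an idempotent, column-finite matrix $\bar E$ over $\Lambda/T$. Lift $\bar E$ entrywise to a column-finite matrix $E$ over $\Lambda$. It satisfies $E^2-E\in M_A(T)$, where $M_A(T)$ means column-finite matrices with entries in $T$.

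\emph{Step 2: correct $E$ and build $X$.} Because $T$ is the trace of a projective module, Proposition~\ref{chartraces} gives the following: every finite set of entries of $E^2-E$ lies in a finitely generated left ideal $J\le T$ with $TJ=J$. Using this, I will build $X$ as the direct limit of the system
\[\Lambda^{(A)}\stackrel{E_1}{\to}\Lambda^{(A)}\stackrel{E_2}{\to}\cdots,\]
where each $E_n$ lifts $\bar E$. The $E_n$ are chosen so that the defect from idempotence at stage $n$ is absorbed at stage $n+1$, that is, $E_{n+1}(E_n^2-E_n)$ is controlled by the $J_n$ with $J_{n+1}J_n=J_n$. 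This is the Whitehead-style construction described before Proposition~\ref{chartraces}, and it parallels the construction of $P$ from $s^2=us$ preceding Proposition~\ref{rightleft}.

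Projectivity of $X$ will follow by showing that the canonical pure sequence $0\to\Lambda^{(A\times\N)}\to\Lambda^{(A\times\N)}\to X\to0$ splits, i.e.\ that $1-\Phi$ has a left inverse. Reducing modulo $T$, each $E_n$ becomes $\bar E$, so $X/XT\cong\varinjlim(\bar E)\cong P'$. Since $T\subseteq \Tr(X)$ is also needed, I will replace $X$ by $X\oplus P^{(\omega)}$ when necessary, where $P$ is projective with trace $T$. This does not change $X/XT$, because $P=PT$. Then $\Tr(X)/T=\Tr_{\Lambda/T}(X/XT)=\Tr(P')$ by Proposition~\ref{traceprop}(iii). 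If $A$ is countable, all modules involved are countably generated, which gives (2).

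\emph{Step 3: uniqueness (3).} Set $Y_i=X_i\oplus P^{(\omega)}$. Since $P'$ is finitely generated, an isomorphism $X_1/X_1T\cong X_2/X_2T$ lifts to a map $X_1\to X_2$ that is onto modulo $T$. The obstruction then lives in $X_2T$, and $X_2T$ is generated by images of $P$. I will then run a back-and-forth argument over countable generating sets, absorbing the discrepancies into copies of $P$, to produce $Y_1\cong Y_2$.

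\emph{Main obstacle.} The main obstacle is the splitting in Step 2, namely choosing the corrections $E_n$ so that $1-\Phi$ has a left inverse. I would first try to mimic Proposition~\ref{rightleft}: look for $u$-like elements built from $J_{n+1}J_n=J_n$ that satisfy $E_{n+1}E_n\equiv E_n$ exactly on the relevant columns.
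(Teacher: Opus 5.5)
The paper does not prove this lemma; it quotes it from \cite{traces} and \cite{wiegand}. Judged on its own, your proposal has a genuine gap exactly where the content of (1) and (2) lies. Step~2 never says how to choose the lifts $E_n$ so that $1-\Phi$ has a left inverse, and you flag this yourself as the main obstacle. The surrounding bookkeeping is fine: $X/XT\cong\varinjlim(\bar E)\cong P'$, adding $P^{(\omega)}$ to force $T\subseteq\Tr_\Lambda(X)$, and $\Tr_\Lambda(X)/T=\Tr_{\Lambda/T}(P')$ via Proposition~\ref{traceprop}(iii). But projectivity of the limit is the whole point, and lifting modulo $T$ gives nothing by itself. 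For $T=\Lambda=\Z$ every matrix is a lift of $\bar E=0$, yet $\varinjlim(\Z\stackrel{2}{\to}\Z\stackrel{2}{\to}\cdots)$ is not projective. The analogy with Proposition~\ref{rightleft} does not transfer either. There the splitting comes from an exact identity $s^2=us$ with $u$ a unit, whereas a lift of $\bar E$ only satisfies $E^2-E\in M_A(T)$.

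Worse, the scheme as set up cannot work in general when $P'$ is finitely generated and you take $A$ finite, which is exactly the case used in Section~7. If $|A|=n<\infty$, the limit is the union of a chain of $n$-generated submodules. So when $\Lambda$ is semilocal, $X/XJ(\Lambda)$ has length at most $n$ times the length of $\Lambda/J(\Lambda)$. Now take $\Lambda=\End_R(R\oplus U)$ and $T=\Tr_\Lambda(e_1\Lambda)$ as in Theorem~\ref{coherent}. Let $P'=S/K=\bar\epsilon(\Lambda/T)$, where $\bar\epsilon$ is a central idempotent of $\Lambda/T\cong S/I\times S/K$; so $n=1$. The argument in the proof of that theorem (split off copies of $e_1\Lambda$, then use $Y$ and the indecomposability of $e_2\Lambda$) shows the following: every projective $X$ with $X/XT\cong S/K$ has $X/XJ(\Lambda)\cong X_1^{(\alpha)}\oplus X_3$ with $\alpha$ infinite. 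Hence no choice of $e_k\equiv\bar\epsilon$ makes $\varinjlim(\Lambda\stackrel{e_1}{\to}\Lambda\stackrel{e_2}{\to}\cdots)$ projective. The ambient modules must be infinitely generated or must grow along the system, for instance by adjoining summands generated by a projective with trace $T$ at each stage. Beyond that, you need an explicit mechanism, coming from $T$ being a trace ideal, that produces the splitting. That mechanism is the missing idea.

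Step~3, by contrast, is easy to close. It needs neither a back-and-forth argument nor finite generation of $P'$.
\begin{enumerate}
\item[(a)] Lift the isomorphism to $\alpha\colon X_1\to X_2$.
\item[(b)] $X_2T$ is the sum of the images of maps $P\to X_2$, and $X_2$ is countably generated. So $\alpha$ together with countably many such maps gives an epimorphism $\gamma\colon X_1\oplus P^{(\omega)}\to X_2$.
\item[(c)] $\gamma$ splits with kernel $C$. Since $\gamma$ induces the given isomorphism modulo $T$ and $P=PT$, you get $C=CT$.
\item[(d)] Hence $C$ is a countably generated direct summand of $P^{(\omega)}$, so $C\oplus P^{(\omega)}\cong P^{(\omega)}$ by the Eilenberg swindle, and therefore $X_1\oplus P^{(\omega)}\cong X_2\oplus P^{(\omega)}$.
\end{enumerate}
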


We are going to apply Lemma~\ref{traces.quo} in the context described in the following remark.

\begin{Remark} \label{mplusM}
Let $M$ and $M'$ be two right modules over a ring $R$, and with endomorphism rings $S$ and $S'$, respectively.
Let	
	$$\Lambda:=\End_R(M'\oplus M)=\begin{pmatrix} S'&\Hom_R (M,M')\\ \Hom_R (M',M)&S \end{pmatrix}$$

Consider the finitely generated projective right $\Lambda$-module $$P=\left(\begin{smallmatrix}1&0\\ 0&0\end{smallmatrix}\right)\Lambda\cong \Hom_R(M'\oplus M,M').$$  Its trace ideal is 
    $$T=\Lambda\begin{pmatrix}1&0\\ 0&0\end{pmatrix}\Lambda=\begin{pmatrix}S'&\Hom_R (M,M')\\ \Hom_R (M',M)&\Hom_R (M',M)\Hom_R (M,M')\end{pmatrix}.$$ 
Then $\Lambda/T\cong S/\Hom_R (M',M) \Hom_R (M,M')$. Notice that 
\[\Hom_R (M',M) \Hom_R (M,M')=\{f\in S\mid f \mbox{ factors through $(M')^n $ for some $n$}\}.\]

Therefore, by Lemma~\ref{traces.quo}, if 
$S/\Hom_R (M',M) \Hom_R (M,M')\cong Q'\oplus P'$ then there exists countably generated projective right $\Lambda$-modules $X\cong X \oplus P^{(\omega)}$ and $Y\cong Y \oplus P^{(\omega)}$ such that $X/XT\cong P'$, $Y/YT\cong Q'$ and $X\oplus Y\cong P^{(\omega)}\oplus S$.

By Proposition~\ref{equivalencia}, if $M'$ and $M$ are finitely generated and we set $A=X\otimes _\Lambda (M'\oplus M) $ and $B=Y\otimes _\Lambda (M'\oplus M)$ we get a decomposition:
\[(M')^{(\omega)}\oplus M\cong A\oplus B.\]

Set $M'=R$. Then $\Lambda/T\cong S/M  \Hom_R (M,R)$ is the endomorphism ring of $M_R$ in the stable category. Let $e=\left(\begin{smallmatrix}1&0\\ 0&0\end{smallmatrix}\right)$. Then, $\Lambda$, $e\Lambda e$, $P_\Lambda$, and $\Hom _\Lambda (P,\Lambda)=\Lambda e$ form a Morita context named  as \emph{Auslander context} in \cite[Example~2.4]{buchweitz}.
\end{Remark}

\subsection{Nearly simple, coherent chain rings that are not domains.}

By  the Drozd-Warfield Theorem \cite[Theorem~3.29, Corollary~3.30]{libro}, if $R$ is a serial ring  every finitely presented $R$-module is 
a direct sum of modules of the form $R/rR$ where $r \in J(R)$. Moreover, by \cite[Proposition~2.21, Proposition~2.18]{puninskibook},
$R/rR \cong R/sR$ if and only if $RrR = RsR$ for any $r,s \in R$ if and only if there are $u,v \in R \setminus J(R)$
such that $r = usv$.

Let $R$ be a nearly simple chain ring (that is, $R$ is a chain ring with exactly three two-sided ideals $R$, $J(R)$ and $0$, and $J(R)^2\neq 0$) which is not a domain.  We also assume that $R$ is  coherent, that is, every 
finitely generated right or left ideal of $R$ is finitely presented. 

 Since we assume that there are $0 \neq x,y \in J(R)$ such that 
$xy = 0$, we immediately see from the remarks above that every element of $J(R)$ has non-trivial left and right annihilator. 
In particular, if $0 \neq j \in J(R)$, then $jR$ is an indecomposable 
finitely presented module which is not projective, that is, $jR \cong R/rR$
for some (any) $0 \neq r \in J(R)$.
Also note that if $f \in {\rm End}_R(R/rR)$ then ${\rm Im}\ f$ is finitely presented 
and thus ${\rm Ker}\ f$ has to be finitely generated. 

For any subset $X$ of $R$ let us denote by $\mathbf{l. ann}_R(X)$ (by $\mathbf{r. ann}_R(X)$) its left (right) annihilator in $R$. The above remarks are the key to prove that if $r\in R$ then $$\mathbf{r. ann}_R(\mathbf{l. ann}_R(r))=rR,$$ by the symmetry of the hypotheses also $$\mathbf{l. ann}_R(\mathbf{r. ann}_R(r))=Rr,$$ see \cite[Lemma~15.2]{puninskibook} for further details. This is to say, that if $R$ is a coherent nearly simple chain ring  that is not a domain then $R$ satisfies the double annihilator condition for finitely generated right and left ideals. In the next Lemma we observe that, using a classical result of Ikeda and Nakayama, this implies that    the Baer criteria holds for finitely generated ideals of $R$.

\begin{Lemma} \label{ikeda-nakayama} Let $R$ be  a coherent, nearly simple chain ring which is not a domain. Then, for any finitely generated (equivalently, principal) right or left ideal $I$ of $R$, any module morphism $I\to R$ is given by multiplication by an element of $R$.
\end{Lemma}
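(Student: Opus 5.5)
Since $R$ is a chain ring, every finitely generated right ideal is principal, so it suffices to treat $I=aR$ with $a\in R$ (the left case is symmetric, using $\mathbf{l. ann}_R(\mathbf{r. ann}_R(a))=Ra$). If $a$ is a unit, then $I=R$ and any $\varphi\colon R\to R$ is left multiplication by $\varphi(1)$. If $a=0$ there is nothing to prove. So assume $0\neq a\in J(R)$, and let $\varphi\colon aR\to R$ be a homomorphism of right $R$-modules. Set $b=\varphi(a)$; we must find $c\in R$ with $b=ca$, since then $\varphi(ax)=bx=cax$ for all $x\in R$.

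The first step is to locate $b$. Since $\varphi$ is well defined, $\mathbf{r. ann}_R(a)\subseteq \mathbf{r. ann}_R(b)$: if $ax=0$ then $bx=\varphi(ax)=0$. Taking left annihilators reverses the inclusion, so
\[\mathbf{l. ann}_R(\mathbf{r. ann}_R(b))\subseteq \mathbf{l. ann}_R(\mathbf{r. ann}_R(a)).\]
By the double annihilator condition recalled above (\cite[Lemma~15.2]{puninskibook}), $\mathbf{l. ann}_R(\mathbf{r. ann}_R(b))=Rb$ and $\mathbf{l. ann}_R(\mathbf{r. ann}_R(a))=Ra$. Hence $b\in Rb\subseteq Ra$, so $b=ca$ for some $c\in R$, which proves the claim. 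This is exactly the Ikeda--Nakayama mechanism: the double annihilator condition on principal left ideals gives the Baer criterion for principal right ideals.

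The only delicate point is to make sure the double annihilator identity is available for every $a$ we need, including units and $0$. The cases $a=0$ and $a$ a unit were handled directly, so we only use it for $0\neq a\in J(R)$, where it is the quoted lemma; it is also harmless for $b=0$. For the left-handed version, let $\psi\colon Ra\to R$ be a homomorphism of left modules and $b=\psi(a)$. Then $\mathbf{l. ann}_R(a)\subseteq\mathbf{l. ann}_R(b)$, and applying $\mathbf{r. ann}_R$ together with $\mathbf{r. ann}_R(\mathbf{l. ann}_R(x))=xR$ gives $b\in aR$, so $\psi$ is right multiplication by an element of $R$. I expect no real obstacle beyond invoking the double annihilator condition correctly; the coherence and nearly simple hypotheses enter only through that cited result.
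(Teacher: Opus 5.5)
Your proof is correct and follows the same route as the paper. The paper cites the Ikeda--Nakayama theorem \cite[Theorem~12.4.2]{kasch}, whose hypothesis (b) is exactly the double annihilator condition $\mathbf{l. ann}_R(\mathbf{r. ann}_R(a))=Ra$ that you use; you carry out that theorem's argument by hand for principal ideals, which makes its hypothesis (a) unnecessary. Two small simplifications are available: for $b$ you only need the trivial inclusion $b\in\mathbf{l. ann}_R(\mathbf{r. ann}_R(b))$, not equality, and the unit case needs no separate treatment, since then $\mathbf{l. ann}_R(\mathbf{r. ann}_R(a))=\mathbf{l. ann}_R(0)=R=Ra$.
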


\begin{Proof} A celebrated lemma by Ikeda and Nakayama, see \cite[Theorem~12.4.2]{kasch}, states that if $R$ is any ring that satisfies:
\begin{itemize}
    \item[(a)] For any pair $I$ and $J$ of right ideals  of $R$, $\mathbf{l. ann}_R(I\cap J)= \mathbf{l. ann}_R(I)+\mathbf{l. ann}_R( J)$,
    \item[(b)] For any finitely generated left ideal $K$ of $R$ then $\mathbf{l. ann}_R(\mathbf{r. ann}_R(K))=K$,
\end{itemize}
then any morphism from a finitely generated right ideal of $R$ to $R$ is given by left multiplication by an element of $R$. 

If $R$ is a chain ring then condition $(a)$ is trivially satisfied. By the remarks before the Lemma, our hypotheses imply that $(b)$ is also satisfied. So we can conclude the claim for finitely generated right ideals and, by symmetry, also for finitely generated left ideals.     
\end{Proof}

\begin{Rem} \label{manymorphisms} Let $R$ be  a coherent, nearly simple chain ring which is not a domain, and let $0\neq r\in J(R)$. Our aim is to determine the endomorphisms of $U=R/rR$ that factor through a finitely generated free module.

Since $\mathbf{l. ann}_R(r)=Rs$ and $\mathbf{r. ann}_R(s)=rR$, we deduce that left multiplication by $s$ induces an isomorphism $\varphi \colon U=R/rR\to  sR$. Therefore, if   $f\colon R/rR\to R/rR$ is a homomorphism then, by Lemma~\ref{ikeda-nakayama}, $g=\varphi\circ f \circ \varphi^{-1} \colon sR\to sR$ is given by left multiplication by $t\in R$. Therefore, we have a commutative diagram 
\[\begin{tikzcd}%[row sep=tiny,column sep=tiny]
        sR \rar{g}  \drar{\varepsilon} & sR \rar{\varepsilon} & R  \\
        & R \urar{t}
    \end{tikzcd}\]
where $\varepsilon$ denotes the inclusion.

If $t\in sR$ then we deduce that $g$ and, hence, $f$ factor through $R$. Otherwise, $sR \subsetneq tR$. In this case, if $f$ is not injective then $t\in J(R)$ and, by the remarks above, $tR\cong R/rR$, so we can deduce that $h\circ f$ factors through $R$ for a suitable monomorphism $h\in \mathrm{End}_R (R/rR)$. 

That is, we are showing that, provided  $f$ is not injective,  there exists a nonomorphism $h$ such that $h\circ f$ factors through $R$. We want to prove that if, in addition, $f$ is not onto then $f$ factors through $R$.
\end{Rem}

We need the following lemma which is similar to \cite[Lemma 7.2]{PP16}. 

\begin{Lemma} \label{PP16}
    Let $R$ be a chain ring, $0 \neq r \in R$, and let $U = R/rR$ be a uniserial module with endomorphism ring $S=\mathrm{End}_R(U)$.  If $f,g \in S$ have equal kernels then there exists a monomorphism 
    $m \in S$ such that either $mf = g$ or $mg = f$. Moreover, if ${\rm Im}\ f$,${\rm Im}\ g \varsubsetneq U_e$, $m$ can be chosen to be an automorphism. 
\end{Lemma}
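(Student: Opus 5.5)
The plan is to compare $f$ and $g$ through their common kernel $N=\mathrm{Ker}\, f=\mathrm{Ker}\, g$, and to reduce everything to comparing images. Both maps factor through the canonical projection $\pi\colon U\to U/N$, say $f=\bar f\pi$ and $g=\bar g\pi$, where $\bar f,\bar g\colon U/N\to U$ are injective. Since $U$ is uniserial, $\mathrm{Im}\, f$ and $\mathrm{Im}\, g$ are comparable; by symmetry I will assume $\mathrm{Im}\, f\subseteq \mathrm{Im}\, g$ and aim for a monomorphism $m$ with $mg=f$.

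To build $m$, set $\psi=\bar f\circ \bar g^{-1}\colon \mathrm{Im}\, g\to \mathrm{Im}\, f\subseteq U$, which is an injective map defined on the submodule $\mathrm{Im}\, g$. The task is to extend $\psi$ to an injective endomorphism $m$ of $U$; then $mg=\psi\bar g\pi=\bar f\pi=f$. This extension is the main obstacle, because $U$ is not assumed injective. I will use the chain-ring structure. The image $\mathrm{Im}\, g$ is cyclic: it is finitely generated as the image of $U$, and it is uniserial, so it equals $aR/rR$ for some $a$ with $rR\subseteq aR$. Then $\psi$ sends $a+rR$ to some $b+rR$. The extension amounts to finding $t\in R$ with $ta\equiv b \pmod{rR}$ and $trR\subseteq rR$. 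I expect to get this by comparing annihilators: injectivity of $\psi$ gives $\mathbf{r.\,ann}(a+rR)=\mathbf{r.\,ann}(b+rR)$, and in a chain ring equality of these annihilators should allow us to write $b=ta$ modulo $rR$ with $t$ in the idealizer of $rR$. Equivalently, one can reduce to the standard fact that cyclic submodules of $R/rR$ with isomorphic generators are related by left multiplication. If this direct approach fails, I will fall back on the method of \cite[Lemma 7.2]{PP16}, which handles exactly this kind of extension. Next, injectivity of $m$ needs to be checked. Since $U$ is uniserial, $\mathrm{Ker}\, m$ is either $0$ or contains every nonzero submodule; in the latter case $\mathrm{Ker}\, m$ would meet $\mathrm{Im}\, g$, where $m=\psi$ is injective. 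The exception is $\mathrm{Im}\, g=0$, in which case $f=g=0$ and $m=1$ works.

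For the ``moreover'' part, assume $\mathrm{Im}\, f$ and $\mathrm{Im}\, g$ are properly contained in $U_e$. I want to modify $m$ into an automorphism. If $m$ is not onto, then $m\in K\setminus I$. Since $\mathrm{Im}\, g\subsetneq U_e=\sum_{h\notin K}\mathrm{Ker}\, h$, there is an epimorphism $h$ with $\mathrm{Im}\, g\subseteq \mathrm{Ker}\, h$, so that $hg=0$. Then $m'=m+h$ satisfies $m'g=f$. By Remark~\ref{monoepi}, one of $m$, $h$, $m+h$ is an isomorphism. To ensure that it is $m+h$, I will choose $h$ appropriately: first take $h\in I\setminus K$, replacing $h$ by a non-injective epimorphism with the same property if necessary. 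Then $m+h$ is both injective and surjective modulo $J(S)=I\cap K$: its image in $S/I$ equals that of $m$, which is nonzero, and its image in $S/K$ equals that of $h$, which is nonzero. Hence $m+h$ is a unit. The point needing care is producing an $h$ that is simultaneously non-injective, so $h\in I$, and annihilates $\mathrm{Im}\, g$. This is where the strict inclusion $\subsetneq U_e$ is used. If no such $h$ existed, $\mathrm{Im}\, g$ would contain every kernel of an epimorphism and so equal $U_e$. Any epimorphism whose kernel contains $\mathrm{Im}\, g\neq 0$ is automatically non-injective.
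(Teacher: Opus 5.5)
There is a genuine gap. Your reduction is correct as far as it goes: $mg=f$ for some $m\in S$ is exactly the statement that $\psi=\bar f\bar g^{-1}\colon \mathrm{Im}\, g\to U$ extends to an endomorphism of $U$. Your injectivity check also works, once ``$\mathrm{Ker}\, m$ contains every nonzero submodule'' is corrected to ``meets every nonzero submodule''. Your automorphism argument (an epimorphism $h$ with $hg=0$, then Remark~\ref{monoepi}) matches the paper.

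But the extension itself, which is the entire content of the lemma, is never proved. You only say that equality of annihilators ``should allow'' you to write $b\equiv ta$ with $t$ in the idealizer $S'$, with a fallback citation. Worse, the direction you fix in advance ($\mathrm{Im}\, f\subseteq \mathrm{Im}\, g$, hence $mg=f$) is not what such a principle can deliver. Nothing in your sketch uses $r\neq 0$, and for $U=R_R$ over a nearly simple chain domain (formally $r=0$) your prescribed direction genuinely fails. Take $0\neq a\in J(R)$. The two-sided ideal $J(R)aR$ equals $J(R)\supsetneq aR$, so some $w\in J(R)$ has $waR\supsetneq aR$. Then $f=a\cdot -$ and $g=wa\cdot -$ are monomorphisms with $\mathrm{Im}\, f\subsetneq \mathrm{Im}\, g$ and equal (zero) annihilators of the generators. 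Yet $mg=f$ would force $tw=1$ for some $t\in R$, which is impossible. So normalizing by images is unjustified.

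The missing idea is the paper's. Let $x,y\in S'$ induce $f,g$, and decide the direction by comparing the \emph{left} ideals $Rx$ and $Ry$. These are comparable because $R$ is a chain ring, and the comparison has nothing to do with comparing the images $xR/rR$ and $yR/rR$. Suppose $x=zy$ with $z\in R$; one must show $z\in S'$, and this is where the kernel hypothesis and $r\neq 0$ enter. Write $xs=r=ys'$. Then $\mathrm{Ker}\, f=\mathrm{Ker}\, g$ gives $sR=s'R$, so $s'=sv$ with $v\in R^*$. If $zr\notin rR$, then $r=zrj$ with $j\in J(R)$, whence $r=zys'j=xs'j=xsvj=rvj$. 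Thus $r(1-vj)=0$, contradicting $r\neq 0$. So left multiplication by $z$ gives a monomorphism $m$ with $f=mg$, and in the other case one gets $g=mf$.

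You should therefore drop the normalization $\mathrm{Im}\, f\subseteq \mathrm{Im}\, g$ and let the comparison of $Rx$ and $Ry$ decide which map is a left multiple of the other. Your automorphism argument then applies verbatim to whichever map is the right-hand factor, since both images are assumed to be properly contained in $U_e$.
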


\begin{proof} Let $S'=\{a\in R\mid ar\in rR\}$. As remarked in \S~\ref{idealizer}, any element of $S$ is given by left multiplication by an element of $S'$ and, hence, $S\cong S'/rR$.

We may assume that $f$ and $g$ are nonzero. Assume that left multiplication of $x \in S'$ induces endomorphism $f$,
i.e., $f(u+rR) = xu +rR$ for any $u \in R$ and that left multiplication of $y\in S'$ induces endomorphism $g$.
Since $f,g \neq 0$, there exist $s,s' \in J(R)$ such that $xs = r$ and $ys' = r$. Obviously, ${\rm Ker}\ f = 
sR/rR$ and ${\rm Ker}\ g = s'R/rR$. Since ${\rm Ker}\ f = {\rm Ker}\ g$, $sR = s'R$, i.e., $sv = s'$ for some 
$v \in R^{*}$.

Since $R$ is a chain ring, $Rx \subseteq Ry$ or $Ry \subseteq Rx$ holds. Assume the former. 
Let $z \in R$ be such that $x= zy$. We claim that $z\in S'$: Suppose that it is not the case. 
Then there exists $j \in J(R)$ such that $zrj = r$. Then $zys'j = r$ and also $r = xs'j = xsvj = rvj$.
Then $r(1-vj) = 0$ and, since $(1-vj)$ is invertible, $r = 0$. It is not the case, hence $zr \in rR$.

Since $z\in S'$, left multiplication by $z$ induces $m \in S$ and $x = zy$ implies $f = mg$.
Similarly, if $Ry \subseteq Rx$ there exists $m \in S$ such that $g = mf$. Since we assume 
${\rm Ker}\ f = {\rm Ker}\ g$, $m$ has to be a monomorphism. 

Assume now that $g = mf$, where  $m$ is not bijective and that $f(U)\varsubsetneq U_e= \sum_{h \in S \setminus K} {\rm Ker}\, h$. 
Then, there exists $h_0\in S \setminus K$ such that $f(U)\subseteq \mathrm{Ker}\, h_0$. Then $(m+h_0)f=f$ and $m+h_0$ is invertible by Remark~\ref{monoepi}.

Similarly if $f = mg$, where $m$ is not bijective and ${\rm Im}\ g \subseteq U_e$ we find an automorphism $m'$ such that $f = m'g$.
\end{proof}

\begin{Prop} \label{stable}
Let $R$ be  a coherent, nearly simple chain ring which is not a domain. Let $U$ be the indecomposable finitely presented uniserial $R$-module which is not free and let $S = {\rm End}_R(U)$. 
Then  \[J(S)=\{f\in S\mid f \mbox{ factors through a finitely generated free module}\}.\]
\end{Prop}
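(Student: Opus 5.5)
We prove the two inclusions separately. Write $F$ for the set of endomorphisms of $U$ that factor through a finitely generated free module; it is a two-sided ideal of $S$ (Remark~\ref{rem:tensortrace}). We will use throughout that $J(S)=I\cap K$, which holds by Lemma~\ref{i+kpbpo} once we know $I+K=S$; this is checked as in Example~\ref{nearlysimple}, using that $jR\cong U$ for every $0\neq j\in J(R)$.

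\emph{Inclusion $F\subseteq J(S)$.} Suppose $h\in F$ were not in $I\cap K$. If $h\notin I$, then $h$ is a monomorphism factoring through $R^n$. Since $U$ is uniserial, hence uniform, some coordinate $R^n\to R$ restricted to $h(U)$ must already be injective. This gives a monomorphism $U\to R$ whose image is a nonzero ideal $jR\cong U$. Then $U$ is isomorphic to a principal right ideal $jR$, and the map $h$ would be an injective map $U\to U$ factoring through $jR\subseteq R$. This is not yet a contradiction, so the argument has to be finer. If $h\notin K$, then $h$ is onto and factors through $R^n$, so $U$ is a quotient of a submodule of $R^n$. The cleanest route is to note that any map $U\to R$ lands in $\mathbf{r. ann}$-type ideals. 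We will use that $\mathrm{Hom}_R(U,R)\cong \mathbf{l. ann}_R(r)=Rs$, so every map $U\to R$ is $u+rR\mapsto s'u$ with $s'\in Rs\subseteq J(R)$. Consequently every composite $U\to R\to U$ is left multiplication by an element of $J(R)\cdot R$ composed with a map $R\to U$. We then aim to show such a composite is neither mono nor epi. It is not epi because its image lies in $J(R)U\ne U$, as $U$ is cyclic local. It is not mono because it kills the element $v+rR$ where $s'v$ lies deep enough: the map $U\to R$, $u\mapsto su$, has image $sR\cong U$, but composing with any $R\to U$, i.e. left multiplication by some $t$, gives $u\mapsto tsu+rR$. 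This kills $\mathbf{r.ann}(s)=rR$, and also more, because $ts\in J(R)s$ and $J(R)sR\subsetneq sR$. Sums of such maps stay in $I\cap K$ since both are ideals.

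\emph{Inclusion $J(S)\subseteq F$ (main step).} Let $f\in I\cap K$, $f\ne0$. Remark~\ref{manymorphisms} conjugates $f$ to left multiplication by $t$ on $sR$; if $t\in sR$ we are done. Otherwise $sR\subsetneq tR$, and $t\in J(R)$ because $f$ is not injective. We will use $f$ non-onto to get the factorization. Here Lemma~\ref{PP16} enters. Remark~\ref{manymorphisms} yields a monomorphism $h$ with $hf\in F$, so $hf$ and $f$ have the same kernel. Choose $f_0\in F$, for instance the map $u\mapsto s't u$ suitably normalized, with $\mathrm{Ker} f_0=\mathrm{Ker} f$. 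We can realize any kernel this way, since kernels are principal and $F$ contains maps $U\to sR\to U$ with all possible principal kernels. Lemma~\ref{PP16} gives a monomorphism $m$ with $mf_0=f$ or $mf=f_0$. In the first case $f\in SF=F$. In the second case, since $\mathrm{Im} f\subsetneq U=U_e$ (in the nearly simple case, by Example~\ref{computationueum}, $U_e=J(R)/rR$, and $f$ non-onto has image in $J(R)/rR$), we need $\mathrm{Im}\subsetneq U_e$ strictly to upgrade $m$ to an automorphism. Then $f=m^{-1}f_0\in F$. If $\mathrm{Im} f=U_e$ exactly, we perturb: write $f=f_1+(f-f_1)$ with both images strictly inside $U_e$. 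This is possible since $U_e$ is not finitely generated while images are cyclic, so the equality case in fact cannot occur. The main obstacle is ensuring the strict containment hypothesis of Lemma~\ref{PP16} and producing $f_0\in F$ with prescribed kernel; we would handle the latter via Lemma~\ref{ikeda-nakayama}, choosing $t'\in R$ so that left multiplication by $t'$ on $sR$ lands in $sR$ with the right annihilator.
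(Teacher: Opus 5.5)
\textbf{The gap is in the inclusion $F\subseteq J(S)$.} Specifically, it is at the step showing that no endomorphism factoring through a free module is a monomorphism. Your reason (``$ts\in J(R)s$ and $J(R)sR\subsetneq sR$'') fails on both counts.

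First, the map $R\to U$ is given by an arbitrary $t\in R$. For a unit $t$ one has $ts\notin J(R)s$: if $ts=js$ with $j\in J(R)$, then $(1-t^{-1}j)s=0$, so $s=0$.

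Second, $J(R)sR$ is a nonzero two-sided ideal, since some $j\in J(R)$ has $js\neq 0$ as in Example~\ref{fgcoherentns}. By near simplicity it therefore equals $J(R)$, which strictly \emph{contains} $sR$. So nothing you wrote shows that the kernel is larger than $rR$.

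The missing idea is the one you abandoned after your first attempt. Since $I$ is an ideal, reduce to a single composite $U\xrightarrow{\alpha}R\xrightarrow{\beta}U$. If it is injective, then $\beta$ is injective on the nonzero submodule $\alpha(U)$ of $R_R$. Since $R_R$ is uniserial, hence uniform, $\beta$ is then injective on all of $R$. Such a $\beta$ is not onto because $U\not\cong R$. Now $\beta$ together with the canonical projection $\pi\colon R\to U$ contradicts Remark~\ref{monoepi}, since none of $\beta$, $\pi$, $\beta+\pi$ can be an isomorphism $R\to U$. This is the paper's argument.

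A more direct alternative: if $\beta(1)=t+rR$, then $\mathrm{Ker}\,\beta\neq 0$ because $tR$ and $rR$ are comparable. By uniformity, $\mathrm{Ker}\,\beta\cap\alpha(U)\neq 0$.

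Your non-surjectivity argument is fine, except that the image lies in $UJ(R)$, not $J(R)U$.

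\textbf{The inclusion $J(S)\subseteq F$ follows the paper's route.} Remark~\ref{manymorphisms} gives a monomorphism $h$ with $hf\in F$ and $\mathrm{Ker}\,hf=\mathrm{Ker}\,f$. The automorphism version of Lemma~\ref{PP16} then applies, because images of non-surjective endomorphisms are cyclic submodules of $U_e=J(R)/rR$, which is not cyclic. Note that $U_e$ is not $U$, as you wrote at one point.

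Your auxiliary $f_0$ ``with prescribed kernel'' is unnecessary, and so is the unproved claim that $F$ realizes every principal kernel. Simply take $f_0=hf$, which you already have and which lies in $K$. Both images are then strictly inside $U_e$, and $f=m^{\pm1}hf\in F$.
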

\begin{Proof}
Fix some $0 \neq r \in J(R)$. Then we may assume that $U = R/rR$.

Let $f\in S$ be such that it factors through a free module. If $f$ is injective then, as $U$ is uniserial,   there is an injective morphism $f'\colon R \to U$ that cannot be surjective. Let $\pi\colon R\to U$ be the canonical projection, then $f'+\pi$ must be an isomorphism (cf. Remark~\ref{monoepi}) which is not possible.

Assume now that $f$ is onto and that it factors through a finitely generated free module. But $\mathrm{Hom}_R(U, R)=\mathrm{Hom}_R(U, J(R))$, so that $U=f(U)=UJ(R)$ which is not possible because $U$ is finitely generated.

This proves that if $f$ factors through a free module, then $f\in I\cap K=J(S)$.

Let $0 \neq f \in J(S)$. We want to show that $f$ factors through a finitely generated free module. By Example~\ref{computationueum}, $U_e=RrR/rR = J(R)/rR$, so ${\rm Im}\ g$
is strictly contained in $U_e$  for any $g \in K$. By Remark~\ref{manymorphisms}, there exists a monomorphism   $h\in S$ such that $h\circ f$ factors through $R$. Since  ${\rm Ker}\ h\circ f = {\rm Ker}\ f$, by Lemma~\ref{PP16} there exists an isomorphism $m \in S$ such that 
$f = mhf$. Then $f$ factors through a free  module, as we wanted to prove.
\end{Proof}

\begin{Ex}\label{fgcoherentns} Let $R$ be a coherent, nearly simple chain ring which is not a domain. Let $0\neq r\in J(R)$. Let $U=R/rR$, and let $S=\mathrm{End}_R(U)$.

By the the remarks preceding Lemma~\ref{ikeda-nakayama}, $\mathbf{l.ann} (r)=Rs$, so that $R/rR\cong sR$. Since $\mathbf{l.ann} (s)$ is principal, and $J(R)$ is not finitely generated there exists $t\in J(R)$ such that $ts\neq 0$ and, hence, $0\neq t'R=\mathbf{r.ann} (t) \varsubsetneq sR$. Therefore, there is an exact sequence
\[0\to t'R \to sR \stackrel t \to tsR\to 0.\]
 Since $t'R\cong sR\cong tsR\cong rR$, we deduce that the have an exact sequence 
 \[0\to R/rR \stackrel f  \to R/rR \stackrel g  \to R/rR \to 0. \]
 As in Example~\ref{nearlysimple} and because $f$ is mono and not epi, we deduce that if $V$ denotes the direct limit of the direct system 
 \[U\stackrel{f}{\to}U  \stackrel{f}{\to}U \stackrel{f}{\to} \cdots\]
 then $V$ is a uniserial module which is not finitely generated that is isomorphic to a direct summand 
of $U^{(\omega)}$. 
 \end{Ex}

\begin{Lemma} \label{lambdaradicalfg} Let $R$ be a coherent, nearly simple chain ring which is not a domain. Let $0\neq r\in J(R)$ and set $U=R/rR$.  Let $S=\mathrm{End}_R(U)$.

Let $\Lambda =\mathrm{End}_R(R\oplus U)$ then,
\begin{itemize}
    \item[(i)] $\Lambda \cong \begin{pmatrix}
        R&\ell _R(r) \\ U&S
    \end{pmatrix}$
where $\ell _R(r)$ denotes the left annihilator of $r$ in $R$.
\item[(ii)] The isomorphism in $(i)$ restricts to an isomorphism $J(\Lambda)\cong \begin{pmatrix}
        J(R)&\ell _R(r) \\ U&J(S)
    \end{pmatrix}$.
\item[(iii)] There is a countably generated projective right $\Lambda$-module $Y$ such that $$Y/YJ(\Lambda)\cong S/I.$$ 
\item[(iv)] Let $T=\Lambda \begin{pmatrix}
    1&0\\ 0&0
\end{pmatrix}\Lambda$ then $Y/YT\cong S/I$.

\item[(v)] Following the  notation of Example~\ref{fgcoherentns}, $Y\otimes _\Lambda (R\oplus U)\cong V$.
\end{itemize}

Moreover, the category of pure projective right modules over $R$ is equivalent to the category of projective right $\Lambda$-modules.
\end{Lemma}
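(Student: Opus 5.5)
For (i) I will use the standard description $\Lambda=\End_R(R\oplus U)=\begin{pmatrix}\End_R(R)&\Hom_R(U,R)\\ \Hom_R(R,U)&S\end{pmatrix}$ from Remark~\ref{mplusM}, with $\End_R(R)\cong R$ acting by left multiplication and $\Hom_R(R,U)\cong U$. For the corner $\Hom_R(U,R)$, a map $R/rR\to R$ is determined by the image $a$ of $1+rR$, and well-definedness means $ar=0$, so $\Hom_R(U,R)\cong \ell_R(r)$, which is $Rs$ by the annihilator discussion preceding Lemma~\ref{ikeda-nakayama}. I will then check that composition matches matrix multiplication, which is routine.

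For (ii) I will use the general fact that the radical of a ring of the form $\End(M_1\oplus M_2)$ consists of the matrices whose diagonal entries lie in $J(\End M_i)$ and whose off-diagonal entries lie in the radical of the corresponding $\Hom$-bimodule, i.e.\ the maps $\phi$ such that $\psi\phi\in J$ for every $\psi$ in the opposite direction. Since $R$ and $U$ are both local as modules (their endomorphism rings are semilocal and indecomposable), it suffices to show that every map $R\to U$ and every map $U\to R$ is radical. This holds because there is no split epimorphism between $R$ and $U$ in either direction: $U$ is not projective, and $R$ is not a quotient of $U$ since $\Hom(U,R)$ lands in $J(R)$. Hence every composite $U\to R\to U$ lies in $J(S)$ (this is the easy half of Proposition~\ref{stable}) and every composite $R\to U\to R$ lies in $J(R)$.

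For (iii) and (iv) I will apply Lemma~\ref{traces.quo}. By Remark~\ref{mplusM} and Proposition~\ref{stable}, $\Lambda/T\cong S/J(S)\cong S/I\times S/K$. The first factor $P'=S/I$ is a projective right $\Lambda/T$-module, so Lemma~\ref{traces.quo}(1)–(2) gives a countably generated projective $Y$ with $Y/YT\cong S/I$, which is (iv). By (ii), $T\subseteq$ the preimage of $J$ combined with the $R$-corner, so $T+J(\Lambda)=\begin{pmatrix}R&*\\ *&J(S)\end{pmatrix}$ and $J(\Lambda)\supseteq$ that part of $T$ which is not the $R$-corner. It remains to show $Y/YJ(\Lambda)\cong S/I$, which I expect to be the main obstacle: one has to see that $Y$ has no $R$-component modulo the radical. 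I will get this from the trace condition in Lemma~\ref{traces.quo}(1), which says $\Tr(Y)/T=\Tr(P')$. I will choose $Y$ so that $\Tr(Y)$ is a proper ideal not containing $e=\left(\begin{smallmatrix}1&0\\0&0\end{smallmatrix}\right)$ modulo $J(\Lambda)$. The cleanest construction, which I will try first, is explicit: take $Y$ to be the direct limit of $e_2\Lambda\xrightarrow{f}e_2\Lambda\xrightarrow{f}\cdots$ with $e_2=1-e$ and $f$ the monomorphism from Example~\ref{fgcoherentns}. It is projective by the argument of Proposition~\ref{rightleft}, using the relation $gf=0$ with $f+g$ a unit. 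Modulo $J(\Lambda)$ it is the direct limit of $S/J(S)$ under multiplication by $f\in K\setminus I$, which kills the $S/K$ component and is invertible on $S/I$; this gives (iii), and (iv) follows by comparing with $T$.

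For (v) I will use Proposition~\ref{equivalencia} for the finitely generated module $R\oplus U$: since tensoring commutes with direct limits and $e_2\Lambda\otimes_\Lambda(R\oplus U)\cong U$, we get $Y\otimes(R\oplus U)\cong\varinjlim(U\xrightarrow{f}U\to\cdots)=V$. For the final claim, every pure projective $R$-module is a direct summand of a direct sum of finitely presented modules, and by Drozd–Warfield these are sums of copies of $R$ and $U$. Hence the pure projectives form exactly $\Add(R\oplus U)$, and the second part of Proposition~\ref{equivalencia} applies.
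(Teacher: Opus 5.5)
Your proposal is correct and is in substance the paper's proof. For (iii) you use the same explicit $Y=\varinjlim\bigl(e_2\Lambda\to e_2\Lambda\to\cdots\bigr)$ under left multiplication by $f$. Reducing modulo $J(\Lambda)$ gives the direct limit of $S/J(S)\cong S/I\times S/K$ under $f\in K\setminus I$, hence $S/I$. Then (iv) comes from $e_2T=e_2J(\Lambda)$, i.e.\ $U\ell_R(r)=J(S)$, which is the nontrivial half of Proposition~\ref{stable}.

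Your projectivity argument reads $f^2=(f+g)f$ inside $\End_\Lambda(e_2\Lambda)\cong S$, and that is fine. The paper instead works in $\Lambda$ with $F=\left(\begin{smallmatrix}0&0\\0&f\end{smallmatrix}\right)$ and $G=\left(\begin{smallmatrix}1&0\\0&g\end{smallmatrix}\right)$; the $1$ is needed to make $F+G$ a unit of $\Lambda$.

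For (ii) you use the general description of $J(\End_R(R\oplus U))$ via radical morphisms, whereas the paper looks at the right ideals generated by the off-diagonal entries. Both rest on the same two facts: $\ell_R(r)\subseteq J(R)$, and maps factoring through free modules lie in $I\cap K$. So this is only a bookkeeping difference.

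The one thing to delete is the preliminary detour through Lemma~\ref{traces.quo}. Any $Y$ produced there has $T\subseteq\Tr_\Lambda(Y)$ with $e\in T$. So $Y/YJ(\Lambda)$ always has a component coming from $R/J(R)$ and cannot be $S/I$; compare $P\cong(P')^{(\N)}\oplus Y$ in Theorem~\ref{coherent}. The explicit construction is therefore not just cleaner but necessary.
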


\begin{Proof} $(i)$. As
$\Lambda \cong  \begin{pmatrix} \mathrm{End}_R(R) &\mathrm{Hom}_R(R/rR, R)\\ \mathrm{Hom}_R(R, R/rR) &\mathrm{End}_R(R/rR)
\end{pmatrix},$
the isomorphisms $\mathrm{Hom}_R(R/rR, R)\cong \ell _R(r)$ and $\mathrm{Hom}_R(R, R/rR)\cong R/rR$ yield the isomorphism claimed in $(i)$.

$(ii)$. If $s\in \ell _R(r)$ then $$\begin{pmatrix}
        0&s \\ 0&0
    \end{pmatrix}\begin{pmatrix}
        R&\ell _R(r) \\ U&S
    \end{pmatrix}=\begin{pmatrix}
        sR&sS \\ 0&0
    \end{pmatrix}$$
    which is easily seen to be contained in $J(\Lambda)$ because $sR\le J(R)$.

    If $t+rR \in U$ then $$\begin{pmatrix}
        0&0 \\ t+rR&0
    \end{pmatrix}\begin{pmatrix}
        R&\ell _R(r) \\ U&S
    \end{pmatrix}=\begin{pmatrix}
        0&0 \\ (t+rR)R&(t+rR)\ell _R(r)
    \end{pmatrix}$$ which is contained in $J(\Lambda)$ because by Proposition~\ref{stable}, $(t+rR)\ell _R(r)\in J(S)$. Now it follows that $J(\Lambda)$ is as claimed in $(ii)$.

$(iii)$. By Example~\ref{fgcoherentns}, there exists $f$ and $g\in S$ such that $f\in K\setminus I$ and $g\in I\setminus K$, $gf=0$ and $f+g$ is invertible. Therefore
$F= \begin{pmatrix}
        0&0 \\ 0&f
    \end{pmatrix}$
and $G=\begin{pmatrix}
        1&0 \\ 0&g
    \end{pmatrix}$ are in $\Lambda$, and are such that $GF=0$ and $F+G$ is invertible. Since $F=F\begin{pmatrix}
        0&0 \\ 0&1
    \end{pmatrix}$ the morphism of $\Lambda$ given by left multiplication by $F$ induces an  endomorphism 
    \[\begin{pmatrix}
        0&0 \\ R/rR&S
    \end{pmatrix}\stackrel{F\times} \longrightarrow  \begin{pmatrix}
        0&0 \\ R/rR&S
    \end{pmatrix}.\]
Let $X=\begin{pmatrix}
        0&0 \\ R/rR&S
    \end{pmatrix}$. By the remarks before Proposition~\ref{rightleft}, the direct limit of the countable direct system 
    \[X\stackrel{F\times} \longrightarrow X \cdots X\stackrel{F\times} \longrightarrow X \cdots \qquad (*)\]
    is a countably generated projective right $\Lambda$-module $Y$. Tensoring $(*)$ by $\Lambda /J(\Lambda)$ we deduce that $Y/YJ(\Lambda)\cong S/I$.

$(iv)$. Note that $XT=XJ(\Lambda)$. Therefore the claim follows from $(iii)$.

$(v)$. Let $e_1=\begin{pmatrix}
    1&0\\ 0&0
\end{pmatrix}$ and $e_2=\begin{pmatrix}
    0&0\\ 0&1
\end{pmatrix}$. Notice that \begin{align*}
    R\oplus R/rR\cong & \Lambda \otimes _\Lambda 
\left(R\oplus R/rR\right)=(e_1\Lambda \otimes _\Lambda \left(R\oplus R/rR\right)) \oplus (e_2\Lambda \otimes _\Lambda \left(R\oplus R/rR\right))=\\ &=\left( \begin{pmatrix}
        R&\ell _R(r) \\ 0&0
    \end{pmatrix}\otimes_\Lambda (R\oplus R/rR)\right)\oplus \left( \begin{pmatrix}
        0&0 \\ R/rR&S
    \end{pmatrix}\otimes _\Lambda (R\oplus R/rR)\right)
    \end{align*}
Since $e_2 (R\oplus R/rR)=R/rR$, applying the functor $-\otimes _\Lambda \left(R\oplus R/rR\right)$ to the direct system $(*)$ yields the direct system 
\[U\stackrel{f}{\to}U  \stackrel{f}{\to}U \stackrel{f}{\to} \cdots\]
whose limit is the $R$-module $V$ of Example~\ref{fgcoherentns}. So that, $Y\otimes _\Lambda (R\oplus U)\cong V$ as we wanted to show.

The final part of the statement is a consequence of the Drodz-Warfield Theorem and Proposition~\ref{equivalencia}, because it follows from these results that the category of pure-projective right $R$ modules is $\mathrm{Add}\, (R\oplus U)$ which is equivalent to $\mathrm{Add}\, \Lambda$.
\end{Proof}

\begin{Th}[Proposition~15.15 in \cite{puninskibook}] \label{coherent}
Let $R$ be a coherent, nearly simple chain ring which is not a domain. Let $0\neq r\in J(R)$. Then 
\[R^{(\N)}\oplus R/rR\cong (R^{(\N)}\oplus V)\oplus W\]
where $V$ is the non-quasi small module constructed in Example~\ref{fgcoherentns}, and $W$
is a pure projective $R$-module which is not a direct sum 
of indecomposable modules. In particular, $W$ is a direct summand of a serial module that is not serial.
\end{Th}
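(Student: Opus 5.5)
We work in the setting of Remark~\ref{mplusM} with $M'=R$, $M=U=R/rR$ and $\Lambda=\mathrm{End}_R(R\oplus U)$, as in Lemma~\ref{lambdaradicalfg}. Let $T=\Lambda e_1\Lambda$ be the trace of $P=e_1\Lambda$, where $e_1=\left(\begin{smallmatrix}1&0\\0&0\end{smallmatrix}\right)$. By Remark~\ref{mplusM}, $\Lambda/T\cong S/U\,\Hom_R(U,R)$. Proposition~\ref{stable} identifies $U\,\Hom_R(U,R)$ with $J(S)$, so $\Lambda/T\cong S/J(S)\cong S/I\times S/K$. This is semisimple, and
\[\Lambda/T\cong S/I\oplus S/K\]
as right $\Lambda/T$-modules. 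Set $P'=S/K$ and $Q'=S/I$.

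\textbf{Lifting step.} We apply Lemma~\ref{traces.quo} in the form used in Remark~\ref{mplusM}, with $e_2\Lambda$ in the role of $S$. This produces countably generated projective $\Lambda$-modules $X$ and $Y_0$ with $X/XT\cong P'$, $Y_0/Y_0T\cong Q'$ and
\[X\oplus Y_0\cong P^{(\omega)}\oplus e_2\Lambda.\]
Lemma~\ref{traces.quo}(3) lets us replace $Y_0$ by any other countably generated lift of $Q'$ after adding $P^{(\omega)}$. By Lemma~\ref{lambdaradicalfg}(iv), the module $Y$ built there is such a lift, so $Y_0\oplus P^{(\omega)}\cong Y\oplus P^{(\omega)}$. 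Since Remark~\ref{mplusM} allows us to take $X\cong X\oplus P^{(\omega)}$, we obtain
\[P^{(\omega)}\oplus e_2\Lambda\cong (P^{(\omega)}\oplus Y)\oplus X.\]
Tensoring with $R\oplus U$ and using Proposition~\ref{equivalencia} together with Lemma~\ref{lambdaradicalfg}(v) gives
\[R^{(\mathbb N)}\oplus U\cong (R^{(\mathbb N)}\oplus V)\oplus W,\]
where $W=X\otimes_\Lambda(R\oplus U)$. The module $W$ is a summand of a pure projective module, hence pure projective.

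\textbf{$W$ has no indecomposable summand.} By the equivalence $\mathrm{Add}(R\oplus U)\simeq$ projective right $\Lambda$-modules (final part of Lemma~\ref{lambdaradicalfg}), it suffices to show that $X$ has no indecomposable direct summand. Suppose $X=X_1\oplus X_2$ with $X_1\neq 0$ indecomposable. Then $X_1$ corresponds to an indecomposable pure projective $R$-module. Indecomposable pure projectives over this serial ring are direct summands of direct sums of $R$ and $U$. Since the relevant projective $\Lambda$-modules are determined modulo $J(\Lambda)$ \cite{pavel}, the possibilities are $R$, $U$ and $V$, corresponding to $e_1\Lambda$, $e_2\Lambda$ and $Y$. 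If $X_1$ were one of these, it would not be killed by $T$, because $X_1/X_1T$ injects into $X/XT\cong S/K$ as a summand.

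\begin{itemize}
\item $e_1\Lambda/e_1\Lambda T=0$, so $X_1\not\cong e_1\Lambda$. But $X\cong X\oplus P^{(\omega)}$ already contains copies of $P=e_1\Lambda$. So the argument cannot simply rule out indecomposable summands; the statement must be arranged more carefully.
\item The correct choice is to define $W$ as the lift of $P'$ \emph{without} absorbing $P^{(\omega)}$. Put $X\cong W'\oplus P^{(\omega)}$ and move $P^{(\omega)}$ into the $R^{(\mathbb N)}$ terms, so that $W$ corresponds to $W'$, with $W'/W'T\cong S/K$. Then the claim that $W$ has no indecomposable summand must be proved directly.
\end{itemize}

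For this, compare modulo $J(\Lambda)$. By Lemma~\ref{lambdaradicalfg}(ii), $\Lambda/J(\Lambda)\cong R/J(R)\times S/I\times S/K$. Then:
\begin{itemize}
\item $e_1\Lambda$ has top $R/J(R)$;
\item $e_2\Lambda$ has top $S/I\oplus S/K$;
\item $Y$ has top $S/I$.
\end{itemize}
Write $W'/W'J(\Lambda)\cong (R/J(R))^{(\alpha)}\oplus S/K$. Any indecomposable summand $Z$ of $W'$ has top with no $S/I$ component, so $Z$ cannot be $e_2\Lambda$ or $Y$. If $Z\cong e_1\Lambda$, then peeling off all such summands would leave a lift of $S/K$ whose trace is contained in $I$ modulo $T$. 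The aim is to contradict Corollary~\ref{tracenotin}, or Example~\ref{fg}, applied to the image in $\Lambda/T\cong S/J(S)$, combined with the structure of $\mathcal T(\Lambda)$.

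\textbf{Main obstacle.} The hard step is showing that $W'$ is not a direct sum of indecomposables: it cannot be $P^{(\alpha)}\oplus(\text{indecomposable})$, nor a pure sum of copies of $P$. I would first try to show that $W'$ has no indecomposable summand with nonzero image modulo $T$ (the $S/K$ part has nowhere to go). Then I would show that if $W'\cong\bigoplus Z_i$, the single $S/K$ would have to live in one indecomposable $Z_i$ of type $e_2\Lambda$, $Y$ or $e_1\Lambda$. None of these has top containing $S/K$ without $S/I$, which is the contradiction.
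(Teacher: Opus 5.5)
\textbf{Verdict: there is a genuine gap in the second half.} Your lifting step is correct and is the paper's argument: your $X$ is the paper's $Q$, and replacing $Y_0$ by $Y$ via Lemma~\ref{traces.quo}(3) gives the displayed isomorphism. The gap is in showing that $W$ is not a direct sum of indecomposables.

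\textbf{The intermediate goal is false.} You aim to show that $W$, or a modified $W'$, has no indecomposable summand. This fails for every choice of lift, so changing the lift cannot help. Let $\Sigma_1,\Sigma_2,\Sigma_3$ be the simple right $\Lambda$-modules of types $R/J(R)$, $S/I$, $S/K$, and note that $J(\Lambda)\subseteq T$. Then any projective $Z$ with $Z/ZT\cong S/K$ has top $\Sigma_1^{(\alpha)}\oplus\Sigma_3$. The case $\alpha=0$ is impossible (see below). If $\alpha\geq 1$, a lift of $Z\to\Sigma_1$ to a map $Z\to e_1\Lambda$ is onto by Nakayama, so $e_1\Lambda$ splits off $Z$. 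The theorem only asks that $W$ not be a direct sum of indecomposables, which is compatible with $W\cong W\oplus R^{(\omega)}$.

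\textbf{The final contradiction is circular.} It assumes the indecomposable projective right $\Lambda$-modules are exactly $e_1\Lambda$, $e_2\Lambda$ and $Y$, and that classification is precisely what must be proved. P\v r\'\i hoda's theorem says a projective is determined by its top, not which tops occur. So it does not exclude an indecomposable projective with top $\Sigma_3$. Applying Corollary~\ref{tracenotin} in $\Lambda/T\cong S/J(S)$ gives nothing either, since that ring is semisimple.

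\textbf{The missing argument (the paper's).} Suppose $X=\bigoplus_i X_i$ with each $X_i$ indecomposable.
\begin{itemize}
\item A summand with $X_i=X_iT$ is generated by $e_1\Lambda$. So it lies in $\mathrm{Add}(e_1\Lambda)$, which is equivalent to the category of projective $R$-modules. These are free because $R$ is local, hence $X_i\cong e_1\Lambda$.
\item Since $X/XT$ is simple, exactly one summand $X_0$ has $X_0/X_0T\cong S/K$. Its top must be exactly $\Sigma_3$; otherwise $e_1\Lambda$ would split off the indecomposable $X_0\not\cong e_1\Lambda$.
\item Then $Y\oplus X_0$ and $e_2\Lambda$ both have top $\Sigma_2\oplus\Sigma_3$, so $Y\oplus X_0\cong e_2\Lambda=\Hom_R(R\oplus U,U)$.
\item This contradicts the indecomposability of $e_2\Lambda$, whose endomorphism ring is $S=\End_R(U)$.
\end{itemize}

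\textbf{Where your use of Corollary~\ref{tracenotin} does work.} It succeeds if run over $S$ rather than $\Lambda/T$. A projective with top $\Sigma_3$ lies in $\mathrm{Add}(e_2\Lambda)$, which is equivalent to the category of projective right $S$-modules. It therefore corresponds to a nonzero projective right $S$-module with top $S/K$, whose trace is contained in $I$. By Corollary~\ref{tracenotin}(i) that module is zero, a contradiction.
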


\begin{Proof}
Let $U = R/rR$ for some fixed $0\neq  r \in J(R)$. Let $\Lambda = {\rm End}_R(R \oplus U)$. By Lemma \ref{lambdaradicalfg}, the category of pure projective $R$-modules is 
${\rm Add}(R \oplus U)$, which is equivalent to ${\rm Proj}$-$\Lambda$. 
Note that indecomposable objects in ${\rm Add}(R\oplus U)$ or ${\rm Proj}$-$\Lambda$ are those with trivial idempotents 
in their endomorphism rings - so indecomposable modules of ${\rm Add}(R\oplus U)$ correspond to indecomposable projective $\Lambda$-modules. 

Now we follow the construction of Remark~\ref{mplusM} with $M'=R$ and $M=U=R/rR$. Let $S=\mathrm{End}_R(U)$. By Proposition~\ref{stable}, the endomorphism of $U$ in the stable category is $S/J(S)\cong S/I\times S/K$.

Set $P'_\Lambda=\mathrm{Hom}_R(R\oplus U, R)\cong \begin{pmatrix}
    1&0\\ 0&0
\end{pmatrix}\Lambda $ and let $T$ be its trace in $\Lambda$. Then there exist countably generated projective right $\Lambda$-modules $P\cong P\oplus (P')^{(\omega)}$ and $Q\cong Q\oplus (P')^{(\omega)}$ such that, as right $S$-modules, $P/PT\cong S/I$ and $Q/QT\cong S/K$; moreover, $(P\oplus Q) \otimes _\Lambda (R\oplus R/rR)\cong R^{(\omega)}\oplus U$.

By Lemma~\ref{lambdaradicalfg}, $\Lambda$ has a countably generated projective right module $Y$ such that $Y/YJ(\Lambda) =Y/YT\cong S/I$ and such that $Y\otimes _\Lambda (R\oplus R/rR)\cong V$. By Lemma~\ref{traces.quo}, $P\cong (P')^{(\N)}\oplus Y$. Moreover $P\otimes _\Lambda (R\oplus R/rR)\cong R^{(\N)}\oplus V$.

Now we prove that $Q$ does not decompose as a sum of indecomposable modules. Assume that $Q = \oplus_{i \in I} Q_i$, with $Q_i$ indecomposable. Since $Q/QT$ is simple, $Q_i = Q_iT$ for every $i$ with exactly $1$
exception. Also note that $Q_i = Q_iT$ means that $Q_i \in {\rm Add}(P')$ and since every projective $R$-module is free,  
$Q_i \cong P'$. Thus we may write $Q = Q_0 \oplus P'^{(\omega)}$ where $Q_0$ is an indecomposable module such that $Q_0/Q_0T \cong S/K$.
To conclude that there is no such $Q_0$ we can follow \cite{pavel}. Note that $\Lambda$ is a semilocal 
ring and $\Lambda/J(\Lambda) \cong R/J(R)\times S/J(S) \cong R/J(R) \times S/I \times S/K$. Let $X_1,X_2,X_3$ be simple modules 
related to components $R/J(R),S/I,S/K$. Recall that every projective $\Lambda$-module $M$ is determined up to isomorphism by its factor $M/MJ(\Lambda)$.
By our construction $Q/QJ(\Lambda) \cong X_1^{(\omega)} \oplus X_3$. If $Q_0/Q_0J(\Lambda)$ contains a direct summand isomorphic to $X_1$, then 
$P_1$ is a direct summand of $Q_0$, so the only possibility is that $Q_0/Q_0J(\Lambda) = X_3$. Also we know there exists a projective $\Lambda$-module 
$Y$ such that $Y/YJ(\Lambda) \cong X_2$. But then $T \oplus Q_0 \cong \mathrm{Hom}_R(R\oplus U, U)$, which is indecomposable. This contradiction proves that such indecomposable module $Q_0$ does not exist. Therefore, $Q$ is not a direct sum of indecomposable submodules as claimed.  

By Proposition~\ref{equivalencia}, $W=Q\otimes _\Lambda (R\oplus R/rR)$ does not decompose as a sum of indecomposable submodules. In particular, it is not serial. 

To conclude the proof, use Proposition~\ref{equivalencia}, to establish the isomorphisms, \[R^{(\N)}\oplus R/rR\cong (P\oplus Q)\otimes _\Lambda (R\oplus R/rR)\cong  (R^{(\N)}\oplus V)\oplus W\]  
claimed in the statement.
\end{Proof}

\begin{Remark}
  Keeping the notation from Theorem \ref{coherent} it is possible to prove that if $R$ is a coherent nearly simple chain ring which is not a domain then any pure projecive right $R$-module is 
  a direct sum $\oplus_{i \in I} U_i$ where each $U_i$ is isomorphic to a module from $\{R,R/rR,V,W\}$. By the final statement in Lemma~\ref{lambdaradicalfg}, this is equivalent to say that any projective right $\Lambda$-module is isomorphic to a direct sum of 
  modules from the set $\{e_1\Lambda, e_2\Lambda,Q,Y\}$ (cf \cite[Theorem~5.1]{pavel}). A similar result can be proved  for projective left $\Lambda$-modules. 
  Let us stress that the key point is the symmetry between ${\rm Proj}$-$\Lambda$ and $\Lambda$-${\rm Proj}$ - each module from the set $\{e_1\Lambda, e_2\Lambda,Q,Y\}$ has its companion in $\Lambda$-${\rm Proj}$
  and hence arguments from \cite[Section~5]{pavel} apply also on the left.
  
  Let us denote  by $P_1 = \Lambda e_1,P_2 = \Lambda e_2$ the indecomposable direct summands of ${}_{\Lambda}\Lambda$. If $f \in K\setminus I, g \in I \setminus K$ satisfy $gf = 0$, we denote 
  $$F' = \left(\begin{array}{cc} 1 & 0 \\0 & f \end{array}\right), G' = \left(\begin{array}{cc} 0 & 0 \\0 & g \end{array}\right) $$
  The equality $G'(G'+F') = G'^2$ and $G'+F' \in \Lambda^{*}$
  give a countably generated projective left $\Lambda$-module $Y'$ which is a direct limit of the sequence 
  $${}_{\Lambda}\Lambda \stackrel{- \times G'}{\to} {}_{\Lambda}\Lambda \stackrel{- \times G'}{\to} {}_{\Lambda}\Lambda \stackrel{- \times G'}{\to} \cdots $$
  and such that $Y'/J(\Lambda)Y'\cong S/K$. Finally, the trace ideal $T = {\rm Tr}_{\Lambda}(P_1)$ and the decomposition $S/T \cong S/I \times S/K$ give a projective left $\Lambda$-module $Q'$ such that $Q' \oplus P_1^{(\omega)} \cong Q'$
  and $Q'/TQ' \cong S/I$. 
  
  By Lemma~\ref{lambdaradicalfg}, $\Lambda/J(\Lambda) \cong R/J(R) \times S/I \times S/K$. Let $X_1',X_2',X_3'$ be simple left $\Lambda$-modules related to these components. 
  For a countably generated projective left $\Lambda$-module $P$ we write ${\rm dim}(P) = (\alpha,\beta,\gamma)$ if $P/J(\Lambda)P \cong {X'_1}^{(\alpha)} \oplus {X'_2}^{(\beta)} \oplus {X'_3}^{(\gamma)}$.
  Direct computation gives:
  \[ {\rm dim}(P_1) = (1,0,0), {\rm dim}(P_2) = (0,1,1),{\rm dim}(Y') = (0,0,1),{\rm dim}(Q') = (\omega,1,0)\,.\]
  If $P$ is a countably generated projective left $\Lambda$-module and ${\rm dim}(P) = (\omega,\beta,\gamma)$, then $P \oplus P_1^{(\omega)} \cong P$ (hence $P$ is $T$-big in the terminology of \cite{AHP2}) and it is easy to see that 
  $P \cong P_1^{(\omega)} \oplus Q'^{(\beta)} \oplus Y'^{(\gamma)}$.
  
  If $P$ is a countably generated projective left $\Lambda$-module and ${\rm dim}(P) = (0,\beta,\gamma)$ then $P_2^{(\omega)} \cong P \oplus P_2^{(\omega)}$, hence $P \in {\rm Add}(P_2)$. The categories ${\rm Add}(P_2)$
  and ${\rm Add}({}_SS)$ are equivalent. In this case we can apply Proposition~\ref{gfzero}(iii) (or repeat the analysis preceding this result) to see that $P$ is isomorphic to a direct sum of modules 
  from the set $\{P_2,Y'\}$. 

  If $P$ is a countably generated projective left $\Lambda$-module and ${\rm dim}(P) = (\alpha,\beta,\gamma)$ for a finite nonzero $\alpha$ then, by Proposition~\ref{trfg}, $P \cong P_1^{\alpha} \oplus P'$
  where ${\rm dim}(P') = (0,\beta',\gamma')$ and hence $P'$ is a direct sum of modules from the set $\{P_2,Y'\}$. 

  Following the notation introduced in Examples~\ref{tracesd1d2}, the trace ideals  of projective left $\Lambda$-modules correspond to the sets $\emptyset, \{1,2,3\},   \{1\} , \{2,3\}, \{3\}, \{1,2\}$ which correspond to the traces of $0$, $\Lambda$, $P_1$, $P_2$, $Y'$ and $Q'$, respectively. While trace ideals of projective right $\Lambda$-modules correspond to the sets $\emptyset, \{1,2,3\},   \{1\} , \{2,3\}, \{2\}, \{1,3\}$.
\end{Remark}

\bibliographystyle{amsplain}%Used BibTeX style is unsrt
\bibliography{references}
\end{document}